\definecolor{links}{rgb}{0.5,0.2,0}
\definecolor{back}{rgb}{0.8,0.8,0.7}
\definecolor{dgreen}{rgb}{0,0.5,0.2}
\definecolor{indigo}{rgb}{0.4,0,0.75}
\newcommand{\Cp}{\mathbf P}
\renewcommand{\O}{\mathscr O}
\renewcommand{\H}{\mathcal H}
\newcommand{\ip}[2]{\left\langle {#1}, {#2} \right\rangle}
\newcommand{\abs}[1]{\left\vert {#1}\right\vert}
\newcommand{\avg}[1]{\left\langle {#1}\right\rangle}
\newcommand{\norm}[1]{\left\Vert {#1}\right\Vert}
\newcommand{\A}{\mathrm{A}}
\newcommand{\RH}{\mathrm{RH}}
\renewcommand{\d}{\mathrm d}
\newcommand{\C}{\mathcal C} 
\renewcommand{\l}{\langle}
\renewcommand{\r}{\rangle}
\newcommand{\ep}{\varepsilon}
\renewcommand{\i}{\mathrm{i}}
\DeclareMathOperator{\VMO}{{VMO}}
\DeclareMathOperator{\e}{{e}}
\DeclareMathOperator{\supp}{{supp}}
\DeclareMathOperator{\diam}{{diam}}
\newcommand{\B}{\mathcal S}
\newcommand{\Bes}{\mathcal B}
\newcounter{thms}
\newcounter{other}
\numberwithin{other}{section}
\newtheorem{proposition}[other]{Proposition}
\newtheorem{theorem}[thms]{Theorem}
\newtheorem*{theorem*}{Theorem}
\newtheorem*{proposition*}{Proposition}
\newtheorem{cor}{Corollary}
\newtheorem*{corollary*}{Corollary}
\numberwithin{cor}{thms}
\newtheorem{lemma}[other]{Lemma}
\theoremstyle{definition}
\newtheorem{remark}[other]{Remark}
\newtheorem{definition}[other]{Definition}
\numberwithin{equation}{section}
\title[Sobolev regularity of quasiregular maps]{Quantitative Sobolev regularity of quasiregular maps}
\author[F. Di Plinio]{Francesco Di Plinio}
\thanks{F. Di Plinio was partially supported by the National Science Foundation under the grant NSF-DMS-200510. This material is based upon work supported by the National Science Foundation under Grant No. DMS-1929284 while this author was in residence at the Institute for Computational and Experimental Research in Mathematics in Providence, RI, during the \textit{Harmonic Analysis and Convexity} program of Fall 2022. F.\ Di Plinio is also partially supported by the FRA 2022 Program of University of Napoli Federico II, project ReSinAPAS - Regularity and Singularity in Analysis, PDEs, and Applied Sciences.}
\address[F. Di Plinio]{Dipartimento di Matematica e Applicazioni, Universit\`a di Napoli \\ \newline \indent Via Cintia, Monte S.\ Angelo 80126 Napoli, Italy}
\email{\href{mailto:francesco.diplinio@unina.it}{\textnormal{francesco.diplinio@unina.it}}}
\author[A. W. Green]{A. Walton Green}
\thanks{A. W. Green's research partially supported by NSF grant NSF-DMS-2202813. }
\author[B. D. Wick]{Brett D. Wick}
\thanks{B. D. Wick's research partially supported in part by NSF grant NSF-DMS-1800057, NSF-DMS-200510, NSF-DMS-2054863 as well as ARC DP 220100285.}
\address[A. W. Green, B. D. Wick]{Department of Mathematics, Washington University in Saint Louis\\ \newline \indent 1 Brookings Drive, Saint Louis, Mo 63130, USA}
\email{\href{mailto:bwick@wustl.edu}{\textnormal{bwick@wustl.edu}}, \href{mailto:awgreen@wustl.edu}{\textnormal{awgreen@wustl.edu}}}
\subjclass[2010]{Primary: 30C62. Secondary: 42B20,42B37}
\keywords{Beltrami equation, quasiregular, quasiconformal, Sobolev regularity, compression of singular integrals, $T\mathbf{1}$-theorems, weighted bounds, Beurling-Ahlfors transform}
\begin{document}

\begin{abstract} We  quantify the Sobolev space norm of the Beltrami resolvent    $(I- \mu \B)^{-1}$,  where $\B$ is the Beurling-Ahlfors transform, in terms of the corresponding Sobolev space norm of the dilatation   $\mu$ in the critical and supercritical ranges. Our estimate entails as a consequence quantitative self-improvement inequalities of Caccioppoli type  for quasiregular distributions with dilatations in $W^{1,p}$, $p \ge 2$.  Our proof strategy is then adapted to yield quantitative estimates for the resolvent   $(I-\mu \B_\Omega)^{-1}$ of the Beltrami equation on a sufficiently regular domain $\Omega$, with $\mu\in W^{1,p}(\Omega)$. Here, $\B_\Omega$ is the compression of $\B$ to a domain $\Omega$. Our proofs do not rely on the compactness or commutator arguments previously employed in related literature.  Instead, they leverage  the weighted Sobolev estimates for compressions of Calder\'on-Zygmund operators to domains, recently obtained by the authors, to extend the Astala-Iwaniec-Saksman technique to higher regularities.
\end{abstract}

\maketitle

\section{Introduction}
For $K \ge 1$, a \emph{$K$-quasiregular} map $f$ in $W^{1,2}_{\mathrm{loc}}(\mathbb C)$ is a solution of the Beltrami equation
	\begin{equation}\label{eq:belt}\tag{B} \overline{\partial} f(z) = \mu(z) \partial f(z), \quad z \in \mathbb C, \end{equation}
where $\mu$, the dilatation, or Beltrami coefficient of $f$, satisfies
	\begin{equation}\label{e:mu-k} \norm{\mu}_{\infty} = k = \frac{K-1}{K+1} < 1.\end{equation}
The assumption \eqref{e:mu-k} forces \eqref{eq:belt} to be an elliptic PDE. Hence, { for $\mu$ compactly supported} there exists a unique \textit{principal solution} $f$ in $W^{1,2}_{\mathrm{loc}}(\mathbb C)$ which is normalized at $\infty$ by
	\[ f(z) = z + O(z^{-1})\]
and is in fact a homeomorphism. An important feature of $K$-quasiregular maps is their self-improving regularity. First, they automatically belong to the smaller Sobolev space $W^{1,p}_{\mathrm{loc}}(\mathbb C)$ for $p$ up to, but not including, the upper endpoint $p_K>2$ of the \textit{critical interval} $I_K$, cf.\ \eqref{eq:crit} below. On the other hand, if $f$ belongs to $W^{1,q}_{\mathrm{loc}}(\mathbb C)$ for some $q<2$, equation \eqref{eq:belt} may be interpreted in the sense of distributions, {{see \eqref{e:BD} below}  
in which case $f$ is termed \emph{weakly $K$-quasiregular}. And in fact, weakly $K$-quasiregular maps in $W^{1,q}_{\mathrm{loc}}(\mathbb C)$ are $K$-quasiregular if $q$ is larger than or equal to the lower endpoint $q_K<2$, again cf.\ \eqref{eq:crit}. Excluding the endpoint case discussed below, both  facts follow from Caccioppoli inequalities, in turn a consequence of the invertibility of the Beltrami resolvent on $L^p(\mathbb C)$ for $p$ in the critical interval 
	\begin{equation}\label{eq:crit} I_K=(q_K,p_K), \quad q_K \coloneqq  \frac{2K}{K+1}, \quad p_K \coloneqq \frac{2K}{K-1}.\end{equation}
which was established by Astala, Iwaniec, and Saksman in \cite{astala01} relying upon the following key results.
\begin{enumerate}
	\item Astala's area distortion theorem, \cite{astala94}, that is, the principal solution belongs to $W^{1,p}_{\mathrm{loc}}$ for $p<p_K$. As a consequence, suitable powers of its Jacobian determinant are Muckenhoupt $\A_p$ weights.
	\item The Coifman-Fefferman theorem from harmonic analysis, \cite{coifman74}, namely that  Calder\'on-Zygmund singular integral operators are bounded on the weighted $L^p(\omega)$ spaces, $1<p<\infty$, if and only if $\omega$ belongs to the Muckenhoupt class $\A_p$. 
\end{enumerate}
The second point arises in connection with the Beurling-Ahlfors transform $\B$, namely the  Calder\'on-Zygmund operator defined for $f \in \C^\infty_0(\mathbb C)$ by
	\begin{equation}\label{e:Beurling} \B f(z) = -\frac{1}{\pi} \lim_{ \ep \to 0} \int_{  \ep<|z-w| <\frac{1}{\ep}} \frac{f(w)}{(z-w)^2} \, \d w, \quad z \in \mathbb C.\end{equation}
As a consequence of $L^p(\mathbb C)$ {  estimates}, $1<p<\infty$, as well as weak-$L^1$ estimates for maximal truncations of Calder\'on-Zygmund kernels, the above limit   exists for almost every $z \in \mathbb C$, when  $f \in L^p(\mathbb C)$ and  $1\leq p<\infty$. Furthermore $\B$ extends to an isometry on $L^2(\mathbb C)$, and   intertwines the $\partial$ and $\overline{\partial}$ derivatives; see \eqref{e:twine} below. Given $\mu$ satisfying \eqref{e:mu-k}, we refer to the operator $(I-\mu \B)^{-1}$ as the Beltrami resolvent.
The general argument of \cite{astala01} is to recast the norm estimate of $(I- \mu \B)^{-1}$ as an \textit{a priori} estimate for an inhomogeneous Beltrami equation, which, by changing variables, amounts to a weighted estimate for the $\overline{\partial}$ equation, with weight given by an appropriate power of the Jacobian of the principal solution. Therefore, utilizing $\B$, the \textit{a priori} estimate is a consequence of (1) and (2).

The operator $I-\mu \B$ fails to be invertible on $L^p(\mathbb C)$ at the endpoints of the interval $(q_K,p_K)$. However,  in essence quantifying point (2) above, Petermichl and Volberg  \cite{petermichl02} obtained the sharp  estimate $\norm{\B}_{{  L^p(\mathbb C,\omega)\to L^p(\mathbb C,\omega)}} \lesssim [\omega]_{\A_p(\mathbb C)}$ for $2 \le p < \infty$, which allows to approximate the endpoint case and obtain that $I-\mu \B$ remains injective at the upper endpoint.  By duality, this implies that weakly $K$-quasiregular maps belonging to $W^{1,q_K}_{\mathrm{loc}}(\mathbb C)$ are in fact $K$-quasiregular.
Interestingly, the Petermichl-Volberg theorem was one of the motivating factors behind the question of  sharp dependence of the $L^p(\omega)$-norm of a generic Calder\'on-Zygmund operator on the $\A_p$ weight characteristic, culminating in Hyt\"onen's celebrated result \cite{hytonen12}.

\subsection{Sobolev regularity of planar Beltrami equations}
This work focuses on the case when $\mu$ has greater regularity, say in addition to $\|\mu\|_\infty <1$, $\abs{D\mu}=|\partial \mu| + |\overline{\partial}\mu|$ belongs to $L^{p}(\mathbb C)$ for some $1<p<\infty$. Clop et. al. \cite{clop-et-al} have proved an analogue for Sobolev spaces which first generalizes the notion of a weakly quasiregular map to a distributional quasiregular map, which means $f$ is only required to be in $L^q_{\mathrm{loc}}$ for some $1<q<\infty$ and satisfies the distributional version of the Beltrami equation \eqref{eq:belt}; see \eqref{e:BD} below. Their strategy is to prove the desired self-improvement estimates first for quasiconformal $f$ which are then extended to distributional quasiregular maps by factorization and the classical Weyl lemma, that holomorphic distributions are holomorphic functions a.e.. We refer to \cites{baison,clop-tolsa,prats18,baison-frac,cruz2013beltrami} for related works on higher order regularity of quasiregular and quasiconformal maps in the complex plane.

Our first result is  a quantitative version of \cite{clop-et-al} in the critical and supercritical range, which we obtain as a consequence of weighted $W^{1,p}$ bounds for the Beurling-Ahlfors operator, in consonance with the strategy of  \cite{astala01} for the zero-th order problem.

\begin{theorem}\label{thm:local}
Assume the dilatation $\mu \in L^\infty(\mathbb C)\cap W^{1,2}(\mathbb C) $ satisfies \eqref{e:mu-k} for some $K\geq 1$.
Then, for each $1<r<2$, 
\begin{equation}\label{e:local-crit} \left\Vert (I- \mu \B)^{-1} \right\Vert_{{  W^{1,r}(\mathbb C)\to W^{1,r}(\mathbb C)}} \lesssim 1\end{equation}
with  implicit constant depending exponentially on $K$, $\|\mu\|_{W^{1,2}(\mathbb C)} $ and $\frac{1}{\min\left\{2-r,r-1\right\}}$.

If in addition $\mu \in W^{1,p}(\mathbb C)$ for some $2<p<\infty$, 
	\begin{equation}\label{e:local-sup} \left\Vert (I- \mu \B)^{-1} \right\Vert_{{  W^{1,p}(\mathbb C)\to W^{1,p}(\mathbb C)}} \lesssim 1+\|\mu\|_{W^{1,p}(\mathbb C)}^2\end{equation}
with  implicit constant depending exponentially on $K$, $\|\mu\|_{W^{1,2}(\mathbb C)} $ and $\max\left\{\frac{1}{p-2},p\right\}$.	
\end{theorem}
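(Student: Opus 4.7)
My plan is to extend the Astala--Iwaniec--Saksman change-of-variable strategy from the $L^r$ level to the $W^{1,r}$ level, using in place of the classical weighted $L^r$ bound for $\B$ the weighted $W^{1,r}$ bounds recently established by the authors. For $h\in\C^\infty_0(\mathbb C)$, let $F$ be the normalized solution of the inhomogeneous Beltrami equation $\overline{\partial}F-\mu\partial F=h$, so that $g:=\overline{\partial}F=(I-\mu\B)^{-1}h$ and $\partial F=\B g$; controlling $\|g\|_{W^{1,r}(\mathbb C)}$ is therefore equivalent to controlling $\|F\|_{W^{2,r}(\mathbb C)}$.

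Factor $F=U\circ\phi$, where $\phi$ is the principal quasiconformal solution of $\overline{\partial}\phi=\mu\partial\phi$. The chain rule combined with the equation for $\phi$ gives
\[
(\overline{\partial}U)\circ\phi=\frac{h}{(1-|\mu|^2)\,\overline{\partial\phi}}, \qquad (\partial U)\circ\phi=\B(\overline{\partial}U)\circ\phi.
\]
A change of variables $w=\phi(z)$ converts the $L^r$ and $W^{1,r}$ norms of $\nabla F$ and $\nabla^2 F$ into weighted $L^r$ and $W^{1,r}$ norms of $\nabla U$ and $\nabla^2 U$, with Jacobian weight $\omega_r(w):=J_\phi(\phi^{-1}(w))^{r/2-1}$ (and a related weight at the second-derivative level). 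By the AIS area-distortion theory, $\omega_r\in\A_r(\mathbb C)$ with $[\omega_r]_{\A_r}$ depending exponentially on $K$ for $r$ in the critical interval $I_K$, and with blow-up near the endpoints $r\to 1,2$ controlled by $1/\min\{2-r,r-1\}$.

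The key analytic step is then to apply the weighted $W^{1,r}$ boundedness of $\B$ on $L^r(\omega_r)$ from the authors' recent work to deduce $\|\partial U\|_{W^{1,r}(\omega_r)}\lesssim\|\overline{\partial}U\|_{W^{1,r}(\omega_r)}$. Estimating $\|\overline{\partial}U\|_{W^{1,r}(\omega_r)}$ via the Leibniz rule applied to the explicit formula $\overline{\partial}U=\big(h/[(1-|\mu|^2)\overline{\partial\phi}]\big)\circ\phi^{-1}$ reduces matters to $\|h\|_{W^{1,r}(\mathbb C)}$, at the cost of two auxiliary inputs: (i) the two-dimensional Sobolev embedding $W^{1,r}(\mathbb C)\hookrightarrow L^{2r/(2-r)}(\mathbb C)$ with constant of order $(2-r)^{-1}$, used to pair $\nabla\mu\in L^2$ via H\"older against the higher-integrability factors arising from the Leibniz expansion; and (ii) quantitative Sobolev regularity $\phi\in W^{2,2}_{\mathrm{loc}}(\mathbb C)$ with $\|\nabla^2\phi\|_{L^2}\lesssim_{K}\|\nabla\mu\|_{L^2}$, itself a consequence of the zeroth-order AIS bound applied to $(I-\B M_\mu)\partial\phi=0$ after one differentiation.

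The main obstacle I expect is the bookkeeping: tracking the exponential dependence on $K$ and $\|\mu\|_{W^{1,2}(\mathbb C)}$ through the weighted $W^{1,r}$ bound on $\B$, the $\A_r$-characteristic of $\omega_r$, and the auxiliary regularity of $\phi$, while keeping the $r$-dependent constants as sharp as the stated $1/\min\{2-r,r-1\}$. For the supercritical bound \eqref{e:local-sup}, the same scheme applies with weight $\omega_p$ of positive exponent $p/2-1$; the quadratic dependence $\|\mu\|_{W^{1,p}}^2$ arises because the Leibniz rule applied to $h/[(1-|\mu|^2)\overline{\partial\phi}]$ produces a term linear in $\nabla\mu$ (from differentiating $\mu$ directly) together with a term involving $\nabla^2\phi$, which via the $W^{2,p}$-regularity of $\phi$ absorbs a second factor of $\nabla\mu$.
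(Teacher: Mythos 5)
Your high-level plan --- factor through the principal solution $\phi$, write $F=U\circ\phi$, exploit the chain-rule identity for $\overline{\partial}U$, change variables to a Jacobian-weighted integral, and then invoke the weighted $W^{1,r}$ bounds for $\B$ --- is in fact exactly the route the paper takes (it writes $u=w\circ f^{-1}$ and arrives at identities \eqref{eq:h}, \eqref{e:Dw}, \eqref{e:SobBeurling}). But there is a real gap in your treatment of the Muckenhoupt weight, and it is precisely the point that makes the theorem go.

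You assert that ``by the AIS area-distortion theory, $\omega_r\in\A_r(\mathbb C)$ \ldots for $r$ in the critical interval $I_K$, \ldots with blow-up near the endpoints $r\to1,2$.'' This conflates two different intervals. The area-distortion machinery produces $\A_p$ membership only for $p$ inside the critical interval $I_K=(q_K,p_K)$; for $K$ large, $I_K$ is a narrow interval around $2$, while the theorem is asserted for \emph{all} $r\in(1,2)$ in the critical case and \emph{all} $p\in(2,\infty)$ in the supercritical case. Area distortion alone simply does not put $|J\phi^{-1}|^{1-r}$ (the exponent that actually arises at the second-derivative level, not $r/2-1$) in $\A_r(\mathbb C)$ for such $r$, so your argument as written would only prove the estimate for a restricted range of exponents. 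The mechanism the paper uses to break out of $I_K$ is Lemma \ref{lemma:moser}, whose proof is the Moser--Trudinger exponential embedding of Lemma \ref{lemma:exp}: since $\partial\phi=e^\sigma$ with $\|D\sigma\|_{L^2}\lesssim\|D\mu\|_{L^2}$, one gets $|J\phi^{-1}|^a\in\A_p(\mathbb C)$ for \emph{every} $a\in\mathbb R$ and \emph{every} $1<p<\infty$, with $[\,\cdot\,]_{\A_p}$ controlled by $\exp(C\cdot\mathrm{poly}(p,a)\cdot\|\mu\|_{W^{1,2}}^2)$. This is where the $W^{1,2}$ hypothesis on $\mu$ is essential, and it also underlies the $L^p$ resolvent bound (Lemma \ref{lemma:invLp}) that you invoke implicitly when you claim $\|\nabla^2\phi\|_{L^q}\lesssim\|\nabla\mu\|_{L^q}$: differentiating $(I-\mu\B)\overline{\partial}\phi=\mu$ gives an inhomogeneous equation that must be solved in $L^p$ for $p$ outside $I_K$, which again needs the all-$p$ invertibility that only follows once the Jacobian powers are known to be $\A_p$ for all $p$. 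You should either prove or cite a Moser--Trudinger type $\A_p$ estimate for the Jacobian powers of $\phi$ under the $W^{1,2}$ hypothesis before the rest of your argument can close.

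A secondary issue: you describe the second-derivative weight only as ``a related weight''; working it out shows it is $|J\phi^{-1}|^{1-r}$, and the cross term $(\partial U\circ\phi)D\overline{\partial}\phi$ (the second term in the paper's \eqref{e:Dw}) needs a separate H\"older-plus-weighted-$L^s$ argument at an auxiliary exponent $s>p$ (and an $L^\infty$ bootstrap to reach $r=p$). These steps are where the stated power $\|\mu\|_{W^{1,p}}^2$ and the $\max\{1/(p-2),p\}$ dependence actually come from, so they cannot be absorbed into a one-line Leibniz remark.
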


In the critical case \eqref{e:local-crit}, one cannot hope for $I-\mu \B$ to be invertible on $W^{1,2}(\mathbb C)$ because in the corollary below, \eqref{eq:cacc-2-sup} fails for $p=2$. Indeed, from \cite{clop-et-al}*{pp. 205-206}, one can consider the quasiregular distribution $\phi(z) = z(1-\log \abs{z})$ which does not belong to $W^{2,2}_{\mathrm{loc}}(\mathbb C)$, though its Beltrami coefficient, $\mu(z) = \frac{z}{\bar z} \frac{1}{2 \log \abs{z}-1}$ does in fact belong to {  $W^{1,2}_{\mathrm{loc}}(\mathbb C)$}. 

Quantitative self-improvement of quasiregular maps is often expressed through Caccioppoli inequalities (see the survey \cite{sbordone} or \cite{astala-book}*{\S 5.4.1}). In our case, Theorem \ref{thm:local} implies the following Caccioppoli inequalities for quasiregular distributions (see \eqref{eq:cacc-1} - \eqref{eq:cacc-2-sup} below). Given an open set $\Omega$ and { $\mu \in W^{1,p}_{\mathrm{loc}} (\Omega) \cap L^\infty(\Omega)$, say $f \in L^{\frac{p}{p-1}}_{\mathrm{loc}}(\Omega)$ is a $\mu$-quasiregular distribution if its Beltrami distributional derivative $(\overline{\partial}-\mu\partial)f=0$. Precisely , following e.g \cite[p. 200]{clop-et-al}, by this we mean that for all $\psi$ in $\C^\infty_0(\Omega)$,
	\begin{equation}\label{e:BD}\left \langle \overline{\partial}f - \mu \partial f,\psi \right \rangle = -\left \langle f, \overline{\partial} \psi - \partial(\mu \psi) \right \rangle = \left \langle f, \overline{\partial} \psi \right\rangle - \left \langle f \partial \mu, \psi \right \rangle - \left \langle f \mu, \partial \psi \right \rangle =0. \end{equation}}

\begin{cor}\label{cor:cacc}
Let $\mu \in L^\infty(\Omega) \cap W^{1,2}_{\mathrm{loc}}(\Omega)$ satisfy  \eqref{e:mu-k} for some $K\geq 1$. Then, for $2<q<\infty$, $f \in L^q_{\mathrm {loc}}(\Omega)$ satisfying \eqref{e:BD}, and any $\eta \in \C^\infty_0(\Omega)$,
	\begin{align}\label{eq:cacc-1} \|\eta (Df)\|_{L^q} &\lesssim \|(D\eta)f\|_{L^q}; \\ 
	\label{eq:cacc-2} \mbox{for all }1<r<2, \quad \|\eta (D^2 f)\|_{L^r} &\lesssim \|(D \eta)f\|_{L^r} + \|(D\eta)(Df)\|_{L^r} + \|(D^2\eta)f\|_{L^r}. \end{align}
In particular, $f \in W^{2,r}_{\mathrm{loc}}(\Omega)$ for every $r<2$.
If furthermore, $\mu \in W^{1,p}_{\mathrm{loc}}(\Omega)$ for some $p>2$, then, for $\frac{p}{p-1}\le q<\infty$, $f \in L^q_{\mathrm {loc}}(\Omega)$ satisfying \eqref{e:BD}, and any $\eta \in \C^\infty_0(\Omega)$,
	\begin{align}\label{eq:cacc-1-sup} \|\eta (Df)\|_{L^q} &\lesssim \|(D\eta)f\|_{L^q}; \\ 
	\label{eq:cacc-2-sup} \mbox{for all }1<r \le p, \quad \|\eta (D^2 f)\|_{L^r} &\lesssim \|(D \eta)f\|_{L^r} + \|(D\eta)(Df)\|_{L^r} +  \|(D^2\eta)f\|_{L^r}. \end{align}
In particular, $f \in W^{2,p}_{\mathrm{loc}}(\Omega)$. 
\end{cor}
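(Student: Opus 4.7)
The plan is to reduce each Caccioppoli inequality to an inversion of the Beltrami resolvent $I-\mu\B$: the first-order ones to $L^q$-invertibility, the second-order ones to the $W^{1,r}$-invertibility supplied by Theorem~\ref{thm:local}. Fix a cutoff $\eta\in\C^\infty_0(\Omega)$ and set $g=\eta f$, which is compactly supported in $\mathbb{C}$. A Leibniz expansion together with \eqref{e:BD} yields the distributional identity
\[
\overline{\partial} g - \mu\,\partial g = (\overline{\partial}\eta - \mu\,\partial\eta)\, f =: F.
\]
Because $g$ has compact support, the intertwining $\partial g = \B(\overline{\partial}g)$ holds as distributions, so the previous display rewrites as $(I-\mu\B)(\overline{\partial}g)=F$ on $\mathbb{C}$.

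To obtain the first-order inequalities \eqref{eq:cacc-1} and \eqref{eq:cacc-1-sup}, one inverts on $L^q$: the source obeys $\|F\|_{L^q}\lesssim \|(D\eta)f\|_{L^q}$, and boundedness of $(I-\mu\B)^{-1}$ on $L^q$ is classical for $q$ in the critical interval $I_K$ by \cite{astala01}, extending to the supercritical range $q\ge p/(p-1)$ via the distributional theory of Clop et al.\ \cite{clop-et-al} (alternatively, by Sobolev interpolation combining the $W^{1,p}$-bound of Theorem~\ref{thm:local} with the $L^2$-isometry of $\B$). This places $\overline{\partial}g,\ \partial g\in L^q$, and isolating $\eta Df$ in the Leibniz identity $Dg=(D\eta)f+\eta Df$ yields \eqref{eq:cacc-1} and \eqref{eq:cacc-1-sup}; in particular $f\in W^{1,q}_{\mathrm{loc}}(\Omega)$.

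For the second-order inequalities \eqref{eq:cacc-2} and \eqref{eq:cacc-2-sup} I would apply Theorem~\ref{thm:local} to the same identity at the Sobolev level. Choosing a slightly larger cutoff $\tilde\eta\in\C^\infty_0(\Omega)$ with $\tilde\eta\equiv 1$ on $\supp\eta$, the first-order regularity just established allows one to expand $F$ by Leibniz and bound
\[
\|F\|_{W^{1,r}(\mathbb{C})} \lesssim \|(D\eta)f\|_{L^r} + \|(D^2\eta)f\|_{L^r} + \|(D\eta)(Df)\|_{L^r},
\]
where the implicit constant absorbs $\|\mu\|_{L^\infty\cap W^{1,p}}$ through H\"older's inequality and the Sobolev embedding applied to the cross terms featuring $D\mu$. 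Theorem~\ref{thm:local} then delivers $\|\overline{\partial}g\|_{W^{1,r}}\lesssim \|F\|_{W^{1,r}}$ in the critical branch, and the same inequality with $r=p$ in the supercritical branch; composing with the $W^{1,r}$-boundedness of $\B$ gives the analogous bound for $\partial g$. A final Leibniz expansion, now of $D^2(\eta f)$, isolates $\eta D^2 f$ and concludes \eqref{eq:cacc-2}--\eqref{eq:cacc-2-sup}; the ``in particular'' membership $f\in W^{2,r}_{\mathrm{loc}}$, respectively $W^{2,p}_{\mathrm{loc}}$, follows by varying $\eta$. The main obstacle is the bookkeeping of the Leibniz expansion of $\|F\|_{W^{1,r}}$: the cross terms $(D\mu)(D\eta)f$ and $\mu(D\eta)(Df)$ must be reabsorbed into the three-term right-hand side of \eqref{eq:cacc-2} with the $\mu$-dependence passing into the implicit constant, which is accomplished by the two-cutoff device together with H\"older's inequality tuned to the exponents furnished by Sobolev embedding on $\mathbb{C}$.
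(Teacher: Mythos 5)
Your strategy matches the paper's at the structural level (reduce to resolvent inversion, $L^q$ for first order, $W^{1,r}$ for second order), but the first-order step has a genuine gap that the paper's duality argument is specifically designed to close, and which your proposal skips.

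The issue is that $f$ is assumed only to lie in $L^q_{\mathrm{loc}}(\Omega)$, so $g=\eta f$ is a compactly supported $L^q$ function and $\overline{\partial}g$ is a priori only a distribution of order one. You cannot apply the bounded operator $(I-\mu\B)^{-1}:L^q\to L^q$ to the distributional identity $(I-\mu\B)(\overline{\partial}g)=F$ to conclude $\overline{\partial}g\in L^q$; boundedness of the inverse gives an estimate on elements already known to lie in $L^q$, not a regularity upgrade for an unknown distribution. Even the pairing $\mu\,\partial g$ is not well-defined against an $L^\infty$ coefficient when $\partial g$ has positive order, so the equation must be read through the weak formulation \eqref{e:BD}. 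The paper resolves this by a genuine duality argument: for arbitrary $g\in\C^\infty_0$ it solves the adjoint Beltrami problem $\overline{\partial}v-\nu\partial v=g$ with $\nu=\mu\chi$, uses Lemma~\ref{lemma:invLp} to control $\|v\|_{W^{1,r}}$, observes $\partial g=(\overline{\partial}-\partial\nu)(\partial v)$, pairs with $\eta f$, and invokes the extended weak formulation to kill one term. This yields $|\langle\partial(\eta f),g\rangle|\lesssim\|(D\eta)f\|_{L^q}\|g\|_{L^r}$ for all test $g$, hence $\partial(\eta f)\in L^q$ by duality. Citing \cite{clop-et-al} for the ``extension to the supercritical range'' is circular in the present context, since the whole point is to re-derive that regularity quantitatively without appealing to the qualitative distributional theory.

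A secondary point: Theorem~\ref{thm:local} requires $\mu\in W^{1,2}(\mathbb C)$ (resp.\ $W^{1,p}(\mathbb C)$) globally, whereas the corollary assumes only local membership. Before invoking Theorem~\ref{thm:local} one must replace $\mu$ by a localized $\nu=\mu\chi$ with $\chi\in\C^\infty_0(\Omega)$, $\chi\equiv1$ on $\supp\eta$, noting that this does not perturb the equation where $\eta$ is supported. Your ``two-cutoff device'' gestures in this direction but is deployed for absorbing cross terms, not for the more fundamental purpose of making the hypotheses of Theorem~\ref{thm:local} apply. Once the first-order step is established by duality and $\mu$ is localized, your second-order argument is essentially the paper's.
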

Experts in the area will readily observe that without the precise dependence on $\|\mu\|_{W^{1,2}(\mathbb C)}$  we provide, the inequalities \eqref{eq:cacc-1} and \eqref{eq:cacc-1-sup} follow from the fact that $W^{1,2}(\mathbb C)$ embeds into the space of functions with vanishing mean oscillation, or $\VMO(\mathbb C)$, together with the invertibility of $I-\mu \B$ on $L^p(\mathbb C)$ for every $1<p<\infty$ whenever $\mu \in \VMO(\mathbb C)$, see \cite{astala01}*{Theorem 5}. However, by sharpening the assumption to $\mu \in W^{1,2}(\mathbb C)$, we establish, in Lemma \ref{lemma:invLp} below, a quantitative version of this result which is a crucial step in Theorem \ref{thm:local} and Corollary \ref{cor:cacc}.
The conclusion that quasiregular distributions in $L^q_{\mathrm{loc}}(\Omega)$ self-improve to membership in $W^{2,r}_{\mathrm{loc}}(\Omega)$ for $q$ and $r$ in the above specified ranges is one of the main results of Clop et. al. in \cite{clop-et-al}. However, the Caccioppoli inequalities \eqref{eq:cacc-2} and \eqref{eq:cacc-2-sup}, with the precise implicit dependence on the local regularity of $\mu$ inherited from \eqref{e:local-crit} and \eqref{e:local-sup} in
Theorem \ref{thm:local} are new to the best of our knowledge. Theorem \ref{thm:local} and the Caccioppoli inequalities are proved in \S \ref{s:local}, and rely on a Moser-Trudinger estimate for the Jacobian of the principal solution to \eqref{eq:belt} with $\mu \in W^{1,2}(\mathbb C)$. 

\subsection{Global Sobolev regularity of Beltrami equations on domains $\Omega$} Theorem \ref{thm:local} was actually uncovered in our attempts to address a more delicate problem, the invertibility of $I- \mu \B_\Omega$, where $\B_\Omega$ is the compression of the Beurling-Ahlfors transform to a bounded Lipschitz domain $\Omega \subset \mathbb C$ defined by 
	\[ \ip{ \B_\Omega f}{g } = \ip{\B \left( f\mathbf 1_{\overline{\Omega}} \right)}{\mathbf 1_{\overline{\Omega}}g}, \quad f,g \in \C^\infty_0(\mathbb C) .\]
In \cite{diplinio23domains}, we developed new $T(1)$-type theorems and weighted Sobolev space estimates for Cal\-de\-r\'on-Zygmund operators on domains, a broad class which includes compressions of global CZ operators. In particular, together with past work of e.g.\ Tolsa \cite{tolsa2013regularity}, the estimates of \cite{diplinio23domains} uncover the precise connection between boundary regularity of $\Omega$ and weighted  Sobolev estimates for $\B_\Omega$.  In this article, this connection is exploited to extend the resolvent strategy of \cite{astala01} to the Sobolev case and obtain the first quantitative Sobolev estimate for $(I - \mu \B_\Omega)^{-1}$.

The compressed Beltrami resolvent $(I-\mu\B_\Omega)^{-1}$ is connected to the Beltrami equation \eqref{eq:belt} for dilatations $\mu$ whose support is contained in $\overline{\Omega}$ and belonging to  $ W^{1,p}(\Omega)$ for some $p>2$. 
The Caccioppoli inequalities of Corollary \ref{cor:cacc} imply that any solution $f$ to \eqref{eq:belt} with $\mu$ of this form belongs to $W^{2,p}_{\mathrm{loc}}(\Omega)$. Thus, the interest is in global regularity, i.e. whether $f$ belongs to $W^{2,p}(\Omega)$. This problem is even of interest when $f$ is the principal solution, in which case one has the representation from \cite{astala-book}*{p. 165},
	\begin{equation}\label{e:principal-rep-1} \overline{\partial} f = (I - \mu \B_\Omega)^{-1} \mu.\end{equation}
Furthermore, since $\partial f = \B_\Omega( \overline{\partial} f)$ by \eqref{e:twine} below, 
	\begin{equation}\label{e:standard} 	
		\left\Vert {f} \right\Vert_{W^{2,p}(\Omega)} 
		\lesssim  \left(1 + \left\Vert \B_\Omega\right\Vert_{{  W^{1,p}(\Omega)\to W^{1,p}(\Omega)}} \right)
		\left\Vert (I_\Omega-\mu \B_\Omega)^{-1}  \right\Vert_{{  W^{1,p}(\Omega)\to W^{1,p}(\Omega)}} \left\Vert \mu \right\Vert_{W^{1,p}(\Omega)}.
	\end{equation}
The first factor, the norm of $\B_\Omega$ on the Sobolev space $W^{1,p}(\Omega)$ is now well-understood in the supercritical range in terms of the boundary regularity of $\Omega$, see \cites{cruz-tolsa,tolsa2013regularity,prats17}. In fact, by these results, it is quantitatively equivalent to the Besov space $B^{1-\frac{1}{p}}_{p,p}(\partial \Omega)$ norm of the boundary normal of $\Omega$; see Definition \ref{def:Bp} below. Accordingly, we say $\Omega$ is a $\Bes_p$ domain if this boundary regularity condition is satisfied.

The second factor in \eqref{e:standard} is our chief object of interest. While quantitative estimates of this norm appear to be unavailable in past literature, several results of qualitative nature have been obtained through methods in antithesis with those developed herein. Initially, invertibility of $I-\mu \B_\Omega$ was studied in the H\"older scale in \cite{mateu-extra} and subsequently extended to the Sobolev and Triebel-Lizorkin scales, \cites{cruz2013beltrami,prats17,prats19,astala-prats-saksman}. These works all share the Neumann series blueprint initially introduced by Iwaniec in \cite{iwaniec}. The main ingredients are unweighted bounds for $\B_\Omega$ on smoothness spaces, established by means of \emph{unweighted} $T(1)$-type theorems.

The apex of this line of attack was a pair of papers by M.~Prats in \cites{prats17,prats19}, sharpening the result of \cite{cruz2013beltrami}, and establishing among other results the remarkable qualitative fact that $I-\mu \B_\Omega$ is invertible on $W^{1,p}(\Omega)$ assuming only that $\Omega$ is a $\Bes_p$ domain.

\begin{theorem}\label{thm:global}
Let $p\ge r>2$, and $\Omega \subset \mathbb C$ be a bounded simply connected $\Bes_p$ domain. Let $\mu$ be supported in $\overline{\Omega}$, satisfying \eqref{e:mu-k} for some $K\geq 1$, and in addition $\mu \in W^{1,p}(\Omega)$. Let $f$ be the principal solution to \eqref{eq:belt}. Then, for $O=f(\Omega)$ and $\omega=\abs{Jf^{-1}}^{1-p}$,
	\[ \begin{aligned}&\|(I- \mu \B_\Omega)^{-1}\|_{{  W^{1,p}(\Omega)\to W^{1,p}(\Omega)}} \lesssim \O \left[ \left\Vert \B_O \right \Vert_{{  W^{1,p}(O,\omega)\to W^{1,p}(O,\omega)}} + \O^3\left(1 + \|\mu\|_{W^{1,p}(\Omega)}^6\right) \right], \\
	&\O = 1 + \norm{O}_{\Bes_p} + \norm{\Omega}_{\Bes_p}. \end{aligned}
\]
The implicit constant depends 
double exponentially on { $\norm{\mu}_{W^{1,r}(\Omega)}$}, $\max\left\{\frac{1}{r-2},p\right\}$,  $K$, $\norm{\mu}_{W^{1,2}(\Omega)}$, and the Dini character of $O$ and $\Omega$.
\end{theorem}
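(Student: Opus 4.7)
The plan is to transport the Astala-Iwaniec-Saksman resolvent strategy, as extended in the proof of Theorem \ref{thm:local}, to the domain setting, with the crucial new ingredient being the weighted Sobolev estimates for compressions of Calder\'on-Zygmund operators from \cite{diplinio23domains}. Fix $h \in W^{1,p}(\Omega)$ and set $g = (I- \mu \B_\Omega)^{-1} h$; it suffices to establish the a priori estimate $\| g \|_{W^{1,p}(\Omega)} \lesssim \text{(RHS)} \cdot \|h\|_{W^{1,p}(\Omega)}$ on a dense class, then close via an approximation/continuity argument. The starting point is to interpret the identity $g - \mu \B_\Omega g = h$ as an inhomogeneous Beltrami equation on $\Omega$ by introducing the Cauchy transform $u = \mathcal C(g \mathbf 1_\Omega)$, for which $\overline\partial u = g \mathbf 1_\Omega$ and $\partial u = \B(g\mathbf 1_\Omega)$ in all of $\mathbb C$, so that the pair $(u,g)$ satisfies an inhomogeneous Beltrami-type equation supported in $\overline \Omega$ with datum $h$. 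The aim is then to estimate $\|Dg\|_{L^p(\Omega)}$ from $\|h\|_{W^{1,p}(\Omega)}$ up to a controllable lower-order term in $\|g\|_{L^p(\Omega)}$, which will be absorbed using the global (zeroth-order) resolvent bound on $L^p$ coming from the domain analogue of Lemma~\ref{lemma:invLp}.

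The second step is the change of variables by the principal solution $f\colon \mathbb C \to \mathbb C$ which maps $\Omega$ homeomorphically onto $O = f(\Omega)$. Setting $v = u \circ f^{-1}$ transforms the Beltrami-type equation for $u$ into a pure $\overline\partial$-equation on $O$, whereupon the transport properties of $\B$ through $f^{-1}$ yield a representation of $\partial v$ in terms of $\B_O$ applied to $\overline\partial v$, modulo a commutator involving $Df, Df^{-1}$, and $D\mu$. Differentiating this representation and invoking Astala's area distortion (item (1) in the introduction) and its quantitative reformulation in Muckenhoupt terms gives that $\omega = |Jf^{-1}|^{1-p}$ is an $\A_p(O)$-weight with characteristic double-exponentially controlled by $K$ and $\norm{\mu}_{W^{1,2}(\Omega)}$. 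Applying the weighted Sobolev boundedness of $\B_O$ on $W^{1,p}(O,\omega)$ from \cite{diplinio23domains}, and then changing variables back, produces the principal term $\O \cdot \|\B_O\|_{\mathcal L(W^{1,p}(O,\omega))}$ in the announced estimate, with the factor $\O$ coming from the bounds on the unweighted $W^{1,p}(\Omega)$-norm of $\B_\Omega$ (and of $\B_O$) in terms of the Besov regularity of the boundary through Tolsa's criterion.

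The third step accounts for the commutator terms arising in the change of variables in the Sobolev regime: these contain first derivatives of $\mu$, products $(D\mu) g$, $\mu (Dg)$, and Jacobian factors of $f^{\pm 1}$, and are estimated by H\"older's inequality on each factor. The zeroth-order piece of $g$ is controlled via the $L^p$ resolvent bound, and the derivative pieces are bounded by feeding back a fraction of $\|Dg\|_{L^p}$ that can be absorbed into the left-hand side. The Moser-Trudinger inequality, used in Theorem \ref{thm:local} to handle the Jacobian, appears here through its domain version applied to $f$ on $\Omega$, whose regularity is ensured by $\mu \in W^{1,p}(\Omega)$. Keeping track of the powers of $\norm{\mu}_{W^{1,p}(\Omega)}$ that arise from each H\"older splitting yields the stated $(1 + \norm{\mu}_{W^{1,p}(\Omega)}^6)$ factor, while the $\O^3$ prefactor stems from combining three applications of the unweighted Sobolev bound for $\B_\Omega$ or $\B_O$, and a separate application to bound the normal to $\partial O$ by its counterpart on $\partial\Omega$.

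The main obstacle is the intertwining step: globally, $\B$ exchanges $\partial$ and $\overline\partial$ cleanly, but the compression $\B_\Omega$ only does so up to a boundary defect, so the change of variables under $f$ produces genuinely new error terms supported near $\partial\Omega$ and $\partial O$ that must be estimated by the Besov regularity of the boundary normals. A further technical hurdle is verifying that $O = f(\Omega)$ remains a $\Bes_p$ domain with $\|O\|_{\Bes_p}$ controlled by $\|\Omega\|_{\Bes_p}$ and $\|\mu\|_{W^{1,p}(\Omega)}$, which requires transferring the Besov regularity of $\partial\Omega$ across the boundary trace of the principal solution; this is where the dependence on the Dini character of both $\Omega$ and $O$ and the double-exponential dependence on $\max\{(r-2)^{-1},p\}$ enter, through the domain $T(1)$-theorems of \cite{diplinio23domains}.
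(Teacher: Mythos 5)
Your broad outline (Cauchy transform to set up an inhomogeneous Beltrami equation, change of variables by the principal solution $f$, weighted Sobolev bound for $\B_O$, Moser--Trudinger control of $|Jf^{-1}|^{1-p}$) is the same as the paper's, but two of the mechanisms you describe for closing the estimate do not match what actually happens and one of them would fail.

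First, the ``boundary defect'' you anticipate in the intertwining step does not arise. Because $h$, hence $\overline\partial w=\mathbf 1_{\overline\Omega}g$, is supported in $\overline\Omega$, the pushforward $\overline\partial u=\overline\partial(w\circ f^{-1})$ is supported in $\overline O$, so $\mathbf 1_{\overline O}\,\partial u=\mathbf 1_{\overline O}\,\B(\overline\partial u)=\B_O(\overline\partial u)$ holds \emph{exactly}; the compression coincides with the global operator on this function, with no error supported near $\partial\Omega$ or $\partial O$. The language of ``commutator terms'' and boundary errors is closer to the Neumann-series approach of Iwaniec and Prats, which the paper explicitly contrasts with its method; mixing the two makes the third step of your plan internally inconsistent.

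Second, and more seriously, your proposal to control the derivative pieces by ``feeding back a fraction of $\|Dg\|_{L^p}$ to be absorbed'' does not work. The problematic term after the change of variables is $(\partial u\circ f)\,D\overline\partial f=(\partial u\circ f)\,\partial f\cdot\lambda$ with $\lambda=D\mu+\mu D\sigma\in L^p(\Omega)$; when you try to bound it in $L^p$ directly by H\"older you must place $(\partial u\circ f)\partial f$ in $L^\infty$, and there is no small multiple of $\|Dg\|_{L^p}$ hiding in this product to absorb (nor is $\|\mu\|_\infty$ relevant here). The paper instead runs the whole argument twice: first at the intermediate exponent $q=pp'/2<p$, where H\"older with a finite exponent $s=p/(p-r)$ closes the loop, then uses Sobolev embedding (since $pp'/2>2$) to obtain the needed $L^\infty(\Omega)$ bound on $(\partial u\circ f)\partial f$, and only then returns to $q=p$. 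Your plan omits this two-stage bootstrap, so as written it does not reach the endpoint $q=p$; and the powers $\O^3(1+\|\mu\|_{W^{1,p}}^6)$ in the statement come precisely from stacking the two stages, not from ``three applications of the unweighted bound'' plus a transfer of the boundary normal from $\partial\Omega$ to $\partial O$.
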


Using the novel weighted $T(1)$ theorems on domains established by the authors in \cite{diplinio23domains}, and the relationship between these testing conditions and boundary smoothness developed in \cites{cruz-tolsa,tolsa2013regularity}, $\norm{\B_O}_{{  W^{1,p}(O,\omega)\to W^{1,p}(O,\omega)}}$ can be quantitatively controlled by $\norm{O}_{\Bes_{p+\ep}}$ (see Lemma \ref{lemma:Bp}.iii below) for any $\ep>0$, yielding the following corollary.

\begin{cor}\label{c:global} Let $p$, $r$, $\Omega$, $\mu$, $f$, $O$, and $\O$ be as in Theorem \ref{thm:global}. Then, for any $\ep>0$,
		\[ \|(I- \mu \B_\Omega)^{-1}\|_{{  W^{1,p}(\Omega)\to W^{1,p}(\Omega)}} \lesssim \O \left[ \norm{O}_{\Bes_{p+\ep}} + \O^3\left(1 + \|\mu\|_{W^{1,p}(\Omega)}^6\right) \right].
\]
The implicit constant depends on the same parameters of Theorem \ref{thm:global}, as well as  $\ep^{-1}$.
\end{cor}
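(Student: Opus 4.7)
The plan is to derive Corollary \ref{c:global} directly from Theorem \ref{thm:global} by replacing the weighted Sobolev operator norm $\|\B_O\|_{\mathcal L(W^{1,p}(O,\omega))}$ with a more concrete geometric quantity, namely the Besov norm of the boundary normal of $O$ at a slightly higher integrability exponent $p+\ep$. This reduction is exactly what Lemma \ref{lemma:Bp}.iii is designed for, so the bulk of the work is checking that its hypotheses are satisfied with explicit control on the parameters, and then assembling the pieces.

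First, I would invoke Theorem \ref{thm:global} verbatim, which reduces the claim to producing a quantitative bound
\[
\left\Vert \B_O \right\Vert_{\mathcal L(W^{1,p}(O,\omega))} \lesssim \norm{O}_{\Bes_{p+\ep}},
\]
where $O = f(\Omega)$ and $\omega = |Jf^{-1}|^{1-p}$. To apply Lemma \ref{lemma:Bp}.iii, one needs two ingredients: (a) that $\omega$ is an $A_p(O)$ weight with characteristic controlled by the same data appearing in the implicit constants of Theorem \ref{thm:global}, and (b) that $O$ is a $\Bes_{p+\ep}$ domain, which is assumed.

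For ingredient (a), I would use the standard fact that the Jacobian of a $K$-quasiconformal map (here $f^{-1}$) satisfies a reverse Hölder inequality of Astala-type: $|Jf^{-1}|^s$ is locally integrable for $s$ up to but not reaching the critical exponent determined by $K$, with quantitative control in terms of $K$. In particular, $\omega = |Jf^{-1}|^{1-p}$ lies in $A_p(O)$ with $[\omega]_{A_p(O)}$ quantitatively bounded by a function of $K$ and the Sobolev regularity of $\mu$; this is where a factor depending on $\|\mu\|_{W^{1,2}(\Omega)}$ and $K$ (already present in the constants of Theorem \ref{thm:global}) is absorbed.

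With (a) and (b) in hand, Lemma \ref{lemma:Bp}.iii supplies $\|\B_O\|_{\mathcal L(W^{1,p}(O,\omega))} \lesssim \|O\|_{\Bes_{p+\ep}}$ with the implicit constant depending on $\ep^{-1}$, $[\omega]_{A_p(O)}$, and the Dini character of $O$. Substituting into Theorem \ref{thm:global} and absorbing the $[\omega]_{A_p(O)}$ dependence into the implicit constant completes the proof. The main technical point to be careful about, rather than a genuine obstacle, is the bookkeeping of dependencies: one must verify that the constants introduced by Lemma \ref{lemma:Bp}.iii (depending on $\ep^{-1}$ and on the weight characteristic) fit inside the already-declared dependencies of Theorem \ref{thm:global}, augmented only by the $\ep^{-1}$ factor advertised in the corollary.
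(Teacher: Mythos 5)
Your route is the paper's route: apply Theorem \ref{thm:global} and then replace $\norm{\B_O}_{\mathcal L(W^{1,p}(O,\omega))}$ by $1+\norm{O}_{\Bes_{p+\ep}}$ via Lemma \ref{lemma:Bp}.iii, with the constant $\mathcal E(\ep^{-1},\norm{\mu}_{W^{1,r}(\Omega)})$ absorbed into the admitted implicit dependence and the extra $\O\cdot 1$ swallowed by $\O^4$.

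One small caution on a side remark: you propose to verify that $\omega=\abs{Jf^{-1}}^{1-p}$ is an $\A_p(O)$ weight via Astala's reverse H\"older inequality for quasiconformal Jacobians. That mechanism only covers a $K$-dependent range of powers and would not yield $\abs{Jf^{-1}}^{1-p}\in\A_p$ for arbitrary $p>2$. In the paper this is instead a consequence of the Moser--Trudinger estimate (Lemma \ref{lemma:exp}) applied to $\log\partial f\in W^{1,2}$ (Lemmas \ref{lemma:moser} and \ref{lemma:moser2}), which exploits the additional Sobolev regularity of $\mu$ to place \emph{all} powers of the Jacobian in $\A_q(\mathbb C)$ with quantitative control. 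This does not affect the validity of your proof, since Lemma \ref{lemma:Bp}.iii already contains that ingredient; it only means your proposed alternative justification for the weight property would not go through on its own.
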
 

Let us provide a more specific description of the relation between Theorem \ref{thm:global}, as well as Corollary \ref{c:global}, and the results of \cite{prats19}. In particular, \cite{prats19}*{Theorem 1.1} tells us that if $\Omega\in \mathcal B_p$ and $\mu\in W^{1,p}(\Omega)$, then the principal solution $f$ lies in $W^{2,p}(\Omega)$.
Standard trace results \cites{gagliardo,triebel83} then entail  that  $O=f(\Omega)$ is a $\Bes_p$ domain as well, which in turn is qualitatively equivalent, see Lemma \ref{lemma:sharp} below, to $\B_O:W^{1,p}(O,\omega) \to W^{1,p}(O,\omega)$, where $\omega$ is as in Theorem \ref{thm:global}. Thus, Theorem \ref{thm:global} holds under the same assumptions  as  \cite{prats19}*{Theorem 1.1, $n=1$},  and may be viewed as a strict quantification  of that result. Furthermore, Corollary \ref{c:global} replaces the analytic condition on $\B_O$ with a fully geometric testing condition on $O$, namely its membership to $\Bes_q$ for some $q>p$, thus providing an explicit dependence on the data $\mu,\Omega $ and $O$.

We close this introduction with  a circle of questions motivated by   Corollary \ref{c:global}. First of all, a crude version of Corollary \ref{c:global} with $\ep=0$ can be obtained without weighted estimates  {  at the price of  exponential dependence on the data $\norm{\Omega}_{\Bes_p}$ and $\norm{O}_{\Bes_p}$; cf.\ Remark \ref{rem:unweight} below. It is thus natural to ask whether} a version of Corollary \ref{c:global} holds with $\ep=0$ and uniform polynomial estimates in the sharp Besov norms on $\Omega$ and $O$. This would hold if the Jacobian power $\omega$ were an $\A_1(O)$ weight polynomially in $\O$ and $\|\mu\|_{W^{1,p}(\Omega)}$, and $\|\B_O \mathbf{1} \|_{W^{1,p}(v,O)}$ were controlled by a constant depending only on $[v]_{\A_1(O)}$ and on the $\Bes_p$ character of $O$. The latter statement for $v=\mathbf 1$ is the content of \cite{cruz-tolsa}*{Theorem 1.1}, whence it is legitimate to ask whether Lebesgue measure can be replaced with a generic $\A_1$ weight therein, and whether a full analogue of  Corollary \ref{c:global} holds for $\ep=0$. 

Furthermore, let us propose a strategy for removing the dependence on the auxiliary $W^{1,r}$-norm of $\mu$ in both Theorem \ref{thm:global} and Corollary \ref{c:global}. { It is introduced to obtain bilipschitz estimates on the principal solution $G$ of an extension of $\mu$ in Proposition \ref{prop:principal} below. A bound on the Lipschitz constant of $G$ (and $G^{-1}$ after a change of variable; cf. \cite[Theorem 5.5.6]{astala-book}) is provided by Theorem \ref{thm:local} due to Sobolev embedding. For this upper bound,} the space $W^{1,r}(\Omega)$ can actually be replaced by any space $X(\Omega)$  enjoying both properties
	\begin{itemize}
	\item[a.] $X(\Omega)$ continuously embeds into both $W^{1,2}(\Omega)$ and $L^\infty(\Omega)$;
	\item[b.] For some $Y \in \{X(\mathbb C),W^{1,2}(\mathbb C)\}$, there exists $M>0$ such that
	\[ \norm{(I-\mu \B)^{-1}}_{Y \to Y} \lesssim 1 + \norm{\mu}_{X(\mathbb C)}^M, \]
	\end{itemize}
By Theorem \ref{thm:local}, $X = W^{1,r}$ for $r>2$ satisfies these conditions. A candidate for a space $X$ larger than $W^{1,r}$ is the Lorentz-Sobolev space consisting of $L^2$ functions with derivatives in the Lorentz space $L^{2,1}$. While it is known that $I-\mu \B$ is invertible on this space \cite{cruz2013beltrami}, no norm estimates are known. This leads to ask whether there is a version of Theorem \ref{thm:local} for $D\mu \in L^{2,1}(\mathbb C)$. 

\subsection*{Structure of the article} In Section \ref{s:quant}, after a few preliminaries, we deduce quantitative estimates for the Beltrami resolvent associated to dilatations $\mu\in W^{1,2}(\mathbb C)$ from a precise $\A_p$-class embedding for powers of Jacobians of the corresponding principal solutions, see Lemma \ref{lemma:invLp} and \ref{lemma:moser} respectively. Section \ref{s:local} contains the proofs of Theorem \ref{thm:local} and Corollary \ref{cor:cacc}. In Section \ref{s:global},  we provide proofs of Theorem \ref{thm:global} and Corollary \ref{c:global}. As an intermediate step, in Proposition \ref{prop:principal}, we establish a quantitative version of a recent result of Astala, Prats and Saksman  of \cite{astala-prats-saksman}*{Theorem 1.1} on the regularity of quasiconformal solutions to Beltrami equations on $\Bes_p$ domains. Furthermore, we explain why methods that do not treat Jacobians of principal solutions as Muckenhoupt weights lead to exponential type estimates in the data $\mu$, $\Omega$, and $O$, so that Theorem \ref{thm:global} is not within their  reach, cf. Remark \ref{rem:unweight} below. Finally, Section \ref{s:Jf} deals with the technical proof of Lemma \ref{lemma:moser}.

\section{The Beltrami resolvent when $\mu\in W^{1,2}(\mathbb C)$}\label{s:quant}
To prove Theorem \ref{thm:local}, we will need a few preliminaries. The facts that we will need from the classical theory of quasiconformal maps will be recalled throughout from the monograph \cite{astala-book}. 
Recall the definition of the Beurling-Ahlfors transform from \eqref{e:Beurling}. It is of particular use because it intertwines the derivatives $\partial:= \frac{\partial}{\partial z}$ and $\overline{\partial}:= \frac{\partial}{\partial {\bar z}}$, which means
	\begin{equation}\label{e:twine} \B(\overline{\partial} f) = \partial f, \quad f \in W^{1,2}(\mathbb C). \end{equation}
This property can be established by appealing to the Fourier transform, or through the Cauchy transform $\mathcal K$ defined by
 	\begin{equation}\label{e:Cauchy} \mathcal K f(z) = \frac{1}{\pi}\lim_{ \ep \to 0} \int_{|z-w| > \ep} \frac{f(w)}{z-w} \, \d w. \end{equation}
The Cauchy transform is the inverse of the $\overline{\partial}$ operator and $\partial \mathcal K = \B$, so that $\partial f = \partial \mathcal K (\overline{\partial} f) = \B (\overline{\partial} f)$.

Use $Df$ to denote the gradient $(\partial f,\overline{\partial}f)$ and for each integer $n \ge 2$, let $D^n f$ denote the vector function consisting of all combinations of $n$-th order partial derivatives in $z$ and $\overline{z}$ of $f$. $D^1f=Df$ and $D^0f=f$. We will use $|D^nf|$ to denote the $\ell^1$ norm of this vector. 

Given an open set $E \subset \mathbb C$, an a.e. positive element of $L^1_{\mathrm{loc}}(E)$ is called a weight on $E$. For $\omega$ a weight on $E$, $n$ a nonnegative integer, and $0<p< \infty$, define the homogeneous and inhomogeneous weighted Sobolev norms by
	\[ \norm{f}_{\dot{W}^{n,p}(E,\omega)} = \sum_{|\alpha|=n} \norm{\left( \partial^{\alpha_1}\overline{\partial}^{\alpha_2} f\right) \omega^{\frac 1p}}_{L^p(E)}, \quad \norm{f}_{{W}^{n,p}(E,\omega)} = \sum_{j=0}^n \norm{f}_{\dot{W}^{j,p}(E,\omega)},\]
where $\alpha = (\alpha_1,\alpha_2) \in \mathbb N^2$ and $|\alpha| = \alpha_1+\alpha_2$.   {  Here, and through out the text we let $\mathcal{L}(X)$ denote the bounded linear operators on the Banach space $X$.  In most applications of this notation $X$ will be either a Lebesgue or Sobolev space, or a weighted version of one of these spaces.}
We also use the local average notation for a cube $Q \subset\mathbb C$,
	\[\avg{f}_{p,Q} = \left( |Q|^{-1} \int_Q \abs{f(z)}^p \, \d z \right)^{\frac 1p},\]
with the simplification $\avg{f}_Q = \avg{f}_{1,Q}$ when $p=1$. We say a weight $\omega$ on $\mathbb C$ belongs to the Muckenhoupt class $\A_p(\mathbb C)$ if the associated characteristic,
	\begin{equation}\label{def:Ap} [\omega]_{\A_p(\mathbb C)} = \sup_{Q \, \mathrm{cube} \, \mathrm{in} \, \mathbb C} \avg{\omega}_Q \avg{\omega^{-1}}_{\frac{1}{p-1},Q}\end{equation}
is finite. To apply the strategy of \cite{astala01}, we will need now weighted Sobolev estimates for $\B$, which were recently obtained for smooth Calder\'on-Zygmund operators in sharp quantitative form in \cite{diplinio22wrt}.  The estimates we require are summarized in the following proposition.
\begin{proposition}\label{p:BC}
Let $n \in \mathbb N$, $1<p<\infty$. There exists $C_{p,n}>0$ such that for any $\omega \in \A_p(\mathbb C)$, 
	\begin{equation}\label{e:Bw} \norm{\B f}_{\dot W^{n,p}(\mathbb C,\omega)} \le C_{p,n} [\omega]^{\max\{1,\frac{1}{p-1}\}}_{\A_p(\mathbb C)} \norm {f}_{\dot W^{n,p}(\mathbb C,\omega)}.\end{equation}
\end{proposition}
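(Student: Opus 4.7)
The plan is to reduce the weighted Sobolev bound \eqref{e:Bw} to a sharp weighted $L^p$ estimate for $\B$ itself, exploiting the fact that $\B$ is a Fourier multiplier with symbol $-\overline\xi/\xi$ and therefore commutes with the partial derivatives $\partial$ and $\overline\partial$. Concretely, for every multi-index $\alpha=(\alpha_1,\alpha_2)\in \mathbb N^2$ with $|\alpha|=n$ and every $f\in \C^\infty_0(\mathbb C)$, one has the identity
\[ \partial^{\alpha_1}\overline{\partial}^{\alpha_2}(\B f) = \B\bigl(\partial^{\alpha_1}\overline{\partial}^{\alpha_2} f\bigr), \]
which is immediate from the Fourier representation and generalizes \eqref{e:twine}. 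A standard density argument, using that $\C^\infty_0(\mathbb C)$ is dense in $\dot W^{n,p}(\mathbb C,\omega)$ whenever $\omega\in \A_p(\mathbb C)$, extends the identity to every $f$ with finite right-hand side in \eqref{e:Bw}.

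Granting the commutation, the statement reduces to the sharp scalar weighted bound
\[ \norm{\B g}_{L^p(\omega)}\leq C_p \,[\omega]_{\A_p(\mathbb C)}^{\max\{1,1/(p-1)\}}\norm{g}_{L^p(\omega)}, \qquad 1<p<\infty, \quad \omega \in \A_p(\mathbb C), \]
applied with $g=\partial^{\alpha_1}\overline{\partial}^{\alpha_2} f$ to each of the $n+1$ multi-indices with $|\alpha|=n$, followed by summation. For $p\geq 2$, this is precisely the Petermichl--Volberg theorem \cite{petermichl02}; for $1<p<2$, it follows either by duality (the adjoint $\B^*$ having the same kernel structure as $\B$), or as a special case of Hyt\"onen's $A_2$ theorem \cite{hytonen12} combined with sharp Rubio de Francia extrapolation. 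The resulting constant $C_{p,n}$ depends only on $p$ and $n$, with a factor at most linear in $n+1$ coming from the sum over multi-indices.

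The main and essentially only obstacle is to ensure that the commutation identity, proved first at the level of Schwartz or test functions, transfers correctly to the $\A_p$-weighted Sobolev class; this is handled by the density property above, which is classical for Muckenhoupt weights. No genuine analytic difficulty arises: once the sharp scalar weighted estimate for $\B$ is quoted, the proof of Proposition \ref{p:BC} is a commutation-plus-summation argument, relying on the fortunate feature that $\B$, being a constant-coefficient Fourier multiplier, is simultaneously diagonalized with every partial derivative.
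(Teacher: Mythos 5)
Your proposal is correct and takes essentially the same approach as the paper: both exploit that $\B$ is a constant-coefficient Fourier multiplier (equivalently, of convolution type) and hence commutes with $\partial$ and $\overline\partial$, thereby reducing \eqref{e:Bw} to the $n=0$ sharp weighted $L^p$ bound of Petermichl--Volberg. The paper's own proof is only a one-paragraph remark that flags the same principal-value subtlety you mention and then defers to \cite{diplinio22wrt}*{Corollary A.1} for the full details.
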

\begin{proof}
Since $\B$ is of convolution type, \eqref{e:Bw} more or less follows from the case $n=0$, which is well-known, \cite{petermichl02}, though some care must be taken with the principal value integral. So, one can consult \cite{diplinio22wrt}*{Corollary A.1} for a complete proof of \eqref{e:Bw}.
\end{proof}

Introduce the notation 
	\begin{equation}\label{e:JD}\abs{Jf} = \abs{\partial f}^2 - \abs{\overline{\partial} f }^2,
\end{equation}
which is equal to the determinant of the Jacobian of $f$ as a mapping from $\mathbb R^2$ to itself. A characterization of $K$-quasiconformal mappings equivalent to \eqref{eq:belt} is the distortion inequality
	\begin{equation}\label{e:Jf-quasi} \abs{Df}^2 \le K \abs{Jf}.\end{equation}
The main lemma concerning $\abs{Jf}$ for $\mu \in W^{1,2}(\mathbb C)$ is a consequence of the critical Moser-Trudinger Sobolev embedding, and is proved in \S \ref{ss:moser} and \ref{ss:particular}.

\begin{lemma}\label{lemma:moser}
Suppose $\mu$ is { compactly supported}, satisfies \eqref{e:mu-k} for some $K\geq 1$, and in addition belongs to $W^{1,2}(\mathbb C)$ with $\|\mu\|_{W^{1,2} {  (\mathbb{C})}}\leq L$. Let $f$ be the principal solution to \eqref{eq:belt}, $a \in \mathbb R$, and $1<p<\infty$. Then, the Jacobians $\abs{Jf}^a$ and $\abs{Jf^{-1}}^a$ are both $\A_p(\mathbb C)$ weights. In particular, there exists a constant $C=C(K)>0$ such that for any $1<p<\infty$,
	\begin{align}
	\label{e:Jf-Ap-1} \left[ \abs{Jf^{-1}}^{1-\frac{p}{2}} \right]_{\A_p(\mathbb C)}^{\max\left\{1,\frac{1}{p-1} \right\} }&\le C \exp \left( C \max\left\{p,\tfrac{1}{p-1}\right\}^2 L^2 \right); \\
	\label{e:Jf-Ap-2} \left[ \abs{Jf^{-1}}^{1-p} \right]_{\A_p(\mathbb C)}^{\max\left\{1,\frac{1}{p-1} \right\}} &\le C \exp\left(C \max\left\{p^2,\tfrac{1}{p-1}\right\} L^2 \right).
	\end{align}
\end{lemma}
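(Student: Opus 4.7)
My plan is to bootstrap the $W^{1,2}$ hypothesis on $\mu$ into an $L^2(\mathbb C)$ bound on $D\log\abs{Jf}$, and then convert this, via a sub-Gaussian John--Nirenberg inequality derived from Moser--Trudinger, into the $\A_p$-characteristic bounds \eqref{e:Jf-Ap-1}--\eqref{e:Jf-Ap-2}. Since $\abs{Jf^{-1}}(w)=\abs{Jf}(f^{-1}(w))^{-1}$, I focus on $\abs{Jf}^a$ and transfer to $\abs{Jf^{-1}}^a$ at the end by a change of variables.

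\emph{Step 1: a PDE for $w=\log\partial f$.} Differentiating \eqref{eq:belt} in $\partial$ and dividing by $\partial f$ gives $\overline{\partial} w - \mu \,\partial w = \partial\mu$; using the intertwining $\partial w = \B(\overline{\partial} w)$, this becomes $(I-\mu\B)(\overline{\partial} w) = \partial\mu$. Because $\norm{\mu\B}_{\mathcal L(L^2(\mathbb C))} \le k <1$, Neumann inversion yields $\norm{\overline{\partial} w}_{L^2(\mathbb C)} \le (1-k)^{-1} L$, and the $L^2$ isometry property of $\B$ yields the same bound for $\norm{\partial w}_{L^2(\mathbb C)}$. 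Since $\abs{Jf}=(1-\abs{\mu}^2)\abs{\partial f}^2$ with $\abs\mu \le k$ and $D\mu\in L^2(\mathbb C)$, this propagates to
\[ \norm{D\log\abs{Jf}}_{L^2(\mathbb C)} \le C(k)\,L. \]
To make this manipulation rigorous I would take as initial regularity the $W^{2,r}_{\mathrm{loc}}$ bound ($r<2$) of \cite{clop-et-al} and approximate by truncation and mollification; $K$-quasiconformality of the principal solution also guarantees $\partial f \neq 0$ a.e., so that $\log\partial f$ is a well-defined measurable function.

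\emph{Step 2: Moser--Trudinger and the $\A_p$ characteristic.} From Moser's sharp exponential bound for $\dot W^{1,2}$ functions, combined with $ab \le \tfrac{1}{2}a^2 + \tfrac{1}{2}b^2$, one obtains for any cube $Q\subset \mathbb C$, any $u$ with $Du\in L^2(Q)$, and any $\lambda \in \R$,
\[ \avg{\exp\bigl(\lambda\,(u-\avg{u}_Q)\bigr)}_Q \le C \exp\bigl(C \lambda^2\, \norm{Du}_{L^2(Q)}^2\bigr). \]
Applied to $u = \log\abs{Jf}$ at exponents $\lambda = a$ and $\lambda = -a/(p-1)$, the $\A_p$ characteristic of $\abs{Jf}^a$ satisfies
\[ [\abs{Jf}^a]_{\A_p(\mathbb C)} = \sup_Q \avg{\e^{a(u-\avg{u}_Q)}}_Q \avg{\e^{-\frac{a}{p-1}(u-\avg{u}_Q)}}_Q^{p-1} \le C \exp\Bigl(C a^2 L^2 \tfrac{p}{p-1}\Bigr). \]
Taking $a = 1-\frac{p}{2}$ for \eqref{e:Jf-Ap-1} and $a = 1-p$ for \eqref{e:Jf-Ap-2}, raising to the $\max\{1,(p-1)^{-1}\}$ power, and splitting into the regimes $p\gtrsim 2$ and $p \lesssim 2$ recovers the asserted dependencies.

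\emph{Step 3: transfer to $f^{-1}$.} Since $\mu_{f^{-1}}$ need not belong to $W^{1,2}$, I avoid redoing the PDE analysis. Instead, from $\log\abs{Jf^{-1}}(w) = -\log\abs{Jf}(f^{-1}(w))$, the chain rule, and the distortion bound $\abs{Df^{-1}}^2 \le K\abs{Jf^{-1}}$ from \eqref{e:Jf-quasi}, the change of variable $w=f(z)$ gives
\[ \int_{\mathbb C} \abs{D\log\abs{Jf^{-1}}(w)}^2 \,\d w \le K \int_{\mathbb C} \abs{D\log\abs{Jf}(z)}^2 \,\d z, \]
so Step 2 applies verbatim with constants dilated by $K$. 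The main obstacle I anticipate is the quantitative bookkeeping in Step 2: verifying that the combination of $Ca^2 L^2 p/(p-1)$ and the $\max\{1,(p-1)^{-1}\}$ power on the $\A_p$ characteristic reproduces exactly the claimed $\max\{p,(p-1)^{-1}\}^2$ and $\max\{p^2,(p-1)^{-1}\}$ exponents. A secondary technical point is the rigorous passage from the formal $W^{1,2}$ computation on $\log\partial f$ in Step 1 to a statement about $D\log\abs{Jf}$, where the intertwining identity $\partial w = \B(\overline{\partial}w)$ requires local Sobolev regularity for $w$.
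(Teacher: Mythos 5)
Your proof is correct and, for Steps 1 and 2, follows essentially the same route as the paper: derive the PDE $(I-\mu\B)(\overline{\partial}\log\partial f)=\partial\mu$, use Neumann inversion in $L^2$ and the isometry of $\B$ to get $\norm{D\log\abs{Jf}}_{L^2(\mathbb C)}\lesssim_k L$, and then invoke a Moser--Trudinger-type sub-Gaussian John--Nirenberg bound (the paper's Lemma \ref{lemma:exp}) to control the $\A_p$ characteristic of $\abs{Jf}^a$. Your exponent bookkeeping for $a=1-\tfrac{p}{2}$ and $a=1-p$ checks out, modulo the same benign imprecision as the paper (raising the first-moment constant in the Moser--Trudinger bound to the power $p-1$ produces an extra $\exp(Cp)$ factor, which both arguments silently absorb into the $K$-dependent constant).

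Where you genuinely diverge from the paper is Step 3. The paper transfers from $\abs{Jf}^a$ to $\abs{Jf^{-1}}^a$ by combining the change of variable $\int_Q\abs{Jf^{-1}}^t=\int_{f^{-1}(Q)}\abs{Jf}^{1-t}$, quasisymmetry of $f$ (sandwiching $f^{-1}(Q)$ between comparable cubes), and a reverse H\"older / $\A_p$ analysis of $\abs{Jf}$ at the exponent $1-t$ for the three regimes of $t$. You instead observe that $\log\abs{Jf^{-1}}=-\log\abs{Jf}\circ f^{-1}$ and use the chain rule together with the distortion inequality $\abs{Df^{-1}}^2\le K\abs{Jf^{-1}}$ and a change of variable to get $\norm{D\log\abs{Jf^{-1}}}_{L^2(\mathbb C)}^2\le K\norm{D\log\abs{Jf}}_{L^2(\mathbb C)}^2$, after which Step 2 applies verbatim. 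Your route is shorter and more self-contained---it avoids quasisymmetry, avoids deriving reverse H\"older bounds, and handles $f$ and $f^{-1}$ by a single mechanism---at the modest cost of invoking the chain rule for Sobolev homeomorphisms (a standard fact for quasiconformal maps). The paper's route is slightly more information-rich (it records the $\RH_s$ membership of $\abs{Jf}^a$ along the way, which is then not reused), but both reach the same quantitative conclusion with the same $K$- and $L$-dependence.

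One presentational suggestion: when you say "Step 2 applies verbatim with constants dilated by $K$," it would be cleaner to first note that $\log\abs{Jf^{-1}}\in W^{1,2}_{\mathrm{loc}}(\mathbb C)$ with global $L^2$ gradient (which your inequality supplies), since the Moser--Trudinger lemma you are re-invoking requires precisely that regularity, not any further structure of $f^{-1}$.
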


The second lemma we will use follows from Lemma \ref{lemma:moser} and the strategy of \cite{astala01}. The estimate \eqref{e:invLp} below is known qualitatively since $W^{1,2}(\mathbb C)$ embeds into $\mathrm{VMO}(\mathbb C)$, the functions with vanishing mean oscillation. And it is well-known that $I-\mu \B$ is invertible on all $L^p(\mathbb C)$ for $1<p<\infty$ and $\mu \in \mathrm{VMO}(\mathbb C)$, a result that be found in \cite{astala01}*{Theorem 5}.

\begin{lemma}\label{lemma:invLp}
Suppose $\mu$ satisfies \eqref{e:mu-k} for some $K\geq 1$, and in addition that $\mu \in W^{1,2}(\mathbb C)$ with $\|\mu\|_{W^{1,2} { (\mathbb{C}})}\leq L$. Then, there exists $C=C(K)>0$ such that for all $1<p<\infty$,
	\begin{equation}\label{e:invLp} \norm{ (I- \mu \B)^{-1}}_{\mathcal L(L^p(\mathbb C))} \le C \exp \left( C \max\left\{p,\tfrac{1}{p-1}\right\}^2 L^2 \right).\end{equation}
\end{lemma}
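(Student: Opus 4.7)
The plan is to adapt the Astala-Iwaniec-Saksman scheme of \cite{astala01} for invertibility of $I-\mu\B$ on $L^p(\mathbb C)$, recasting it as a weighted $L^p$ bound on $\B$ with weight an appropriate Jacobian power of the principal solution. Since $\|\mu\B\|_{\mathcal L(L^2(\mathbb C))}\le k<1$, for each compactly supported $g\in L^p(\mathbb C)\cap L^2(\mathbb C)$ there is a unique $\phi\in L^2(\mathbb C)$ solving $(I-\mu\B)\phi=g$. It thus suffices to establish the \emph{a priori} bound \eqref{e:invLp} for such $\phi$; density combined with a symmetric argument for the dual operator on $L^{p'}(\mathbb C)$ then upgrades this to bounded invertibility on all of $L^p(\mathbb C)$. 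Setting $F=\mathcal K\phi$, \eqref{e:twine} gives $\overline{\partial}F=\phi$ and $\partial F=\B\phi$, so the equation becomes the inhomogeneous Beltrami equation $\overline{\partial}F-\mu\partial F=g$. Pushing forward through the principal solution of \eqref{eq:belt} by $F=G\circ f$, the identity $\overline{\partial}f=\mu\partial f$ collapses the chain rule to
\[
\overline{\partial}F-\mu\partial F=(1-|\mu|^2)(\overline{\partial}G\circ f)\overline{\partial f},
\]
which pins down $\overline{\partial}G$ as an explicit function of $g$.

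Next, setting $\omega=\abs{Jf^{-1}}^{1-p/2}$ and performing the change of variables $w=f(z)$, the quasiconformal distortion $(1-k^2)|\partial f|^2\le |Jf|\le|\partial f|^2$ yields
\[
\|\overline{\partial}G\|_{L^p(\mathbb C,\omega)}\lesssim_K\|g\|_{L^p(\mathbb C)},\qquad \|\phi\|_{L^p(\mathbb C)}=\|\overline{\partial}F\|_{L^p(\mathbb C)}\lesssim_K\|\partial G\|_{L^p(\mathbb C,\omega)}+\|\overline{\partial}G\|_{L^p(\mathbb C,\omega)};
\]
the exponent $1-p/2$ in $\omega$ is precisely the one that makes the Jacobian arising from the pushforward cancel against the distortion factor $|\partial f|^p$. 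Since $\partial G=\B(\overline{\partial}G)$ by \eqref{e:twine}, the core remaining task is to control $\|\B\|_{\mathcal L(L^p(\mathbb C,\omega))}$. Proposition \ref{p:BC} with $n=0$ delivers this in terms of $[\omega]_{\A_p(\mathbb C)}^{\max\{1,1/(p-1)\}}$, and estimate \eqref{e:Jf-Ap-1} of Lemma \ref{lemma:moser} bounds the latter by $C\exp(C\max\{p,1/(p-1)\}^2 L^2)$, matching \eqref{e:invLp}.

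The main obstacle I anticipate is the bookkeeping in the change of variables: the Jacobian exponent in $\omega$ must be chosen so that, after pullback through $f$ and application of the distortion inequality, one obtains exactly the weight to which Proposition \ref{p:BC} applies. A closely related point is that the $p$-dependence of constants has to be tracked explicitly through both Proposition \ref{p:BC} and Lemma \ref{lemma:moser}, in particular as $p$ approaches either endpoint of $(1,\infty)$; the qualitative $W^{1,2}(\mathbb C)\hookrightarrow\VMO(\mathbb C)$ argument mentioned after the statement yields invertibility but not the claimed exponential control in $L$.
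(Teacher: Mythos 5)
Your proof is correct and follows the same route as the paper: the paper's proof of Lemma \ref{lemma:invLp} simply cites \cite{astala01} and \cite{astala-book}*{\S 14.2} for the reduction $\|(I-\mu\B)^{-1}\|_{\mathcal L(L^p(\mathbb C))}\lesssim_K\|\B\|_{\mathcal L(L^p(\mathbb C,\omega))}$ with $\omega=|Jf^{-1}|^{1-p/2}$, and then invokes Proposition \ref{p:BC} (with $n=0$) and \eqref{e:Jf-Ap-1} from Lemma \ref{lemma:moser}. You have merely unpacked the AIS change-of-variables reduction, exactly as the paper does in the proof of Proposition \ref{prop:LpBO}, so the two arguments coincide.
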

\begin{proof}[Proof of Lemma \ref{lemma:invLp}]
{ An approximation argument such as \eqref{e:triangle} below shows that we may assume $\mu$ is compactly supported.}The Astala-Iwaniec-Saksman strategy from \cite{astala01} shows that
	\begin{equation}\label{e:AIS}  \|(I- \mu \B)^{-1} \|_{\mathcal L(L^p(\mathbb C))} \lesssim_K \| \B\|_{\mathcal L(L^p(\mathbb C,\omega))}, \end{equation}
where $\omega = \abs{J{f^{-1}}}^{1-\frac{p}{2}}$ and $f$ is the principal solution to \eqref{eq:belt}. See \cite{astala-book}*{\S 14.2 (14.25)} for this exact statement, or refer to the proof of Proposition \ref{prop:LpBO} below. Estimating the right hand side of \eqref{e:AIS} by \eqref{e:Bw} in Proposition \ref{p:BC} with $n=0$, and \eqref{e:Jf-Ap-1} in Lemma \ref{lemma:moser} concludes the proof.
\end{proof}

\section{Proof of Theorem \ref{thm:local} and Corollary \ref{cor:cacc}}\label{s:local}
We will prove the critical \eqref{e:local-crit} and supercritical \eqref{e:local-sup} estimates in Theorem \ref{thm:local} at the same time. To this end,  let $2 \le p < \infty$ and introduce
	\begin{equation}\label{e:r} 
		r \in \left\{ 
			\begin{array}{ll} 
			\left\{ \tfrac{pp'}{2}, p \right\} & p > 2, 
			\\ (1,2) & p=2, \end{array} \right. 
		\quad q \coloneqq \left\{ 
			\begin{array}{ll} p, & p > 2, \\ 
			r, & p=2. \end{array} \right. 
	\end{equation}
The key relationship among these exponents is that 
	\begin{align}
	\label{e:sob-emb-part} &{ \norm{(Dg_1)g_2}_{L^r(\mathbb C)} \lesssim \norm{g_1}_{W^{1,p}(\mathbb C)} \norm{g_2}_{W^{1,q}(\mathbb C)}; }\\
	\label{e:sob-emb} &\norm{D(g_1g_2)}_{L^r(\mathbb C)} \lesssim \left( \|g_1\|_{L^\infty(\mathbb C)} + \|g_1\|_{W^{1,p}(\mathbb C)} \right) \|g_2\|_{W^{1,q}(\mathbb C)},\end{align}
whenever the right hand side is finite.
When $p=2$ or $r=p$, \eqref{e:sob-emb-part} is a consequence of the product rule, H\"older's inequality, and Sobolev embedding. When $p>2$ and $r=\frac{pp'}{2}$, H\"older's inequality shows that
	\[ \norm{(Dg_1)g_2}_{L^{\frac{pp'}{2}}(\mathbb C)} \le  \norm{Dg_1}_{L^{p}(\mathbb C)} \norm{g_2}_{L^{\frac{pp'}{2-p'}}(\mathbb C)}.\]
The second term is then handled by interpolation between $L^\infty$ and $L^p$, and subsequently bounded by $\norm{g_2}_{W^{1,p}(\mathbb C)}$ due to Sobolev embedding. The second estimate \eqref{e:sob-emb} follows from the product rule and applying \eqref{e:sob-emb-part}. Since the final estimate will have exponential blow-up at the endpoints of our ranges ($r$ approaching $1$ or $2$ in the case $p=2$, or in the other case as $p$ approaches $2$ or $\infty$), we use $A\lesssim B$ to denote $A \le C B$ for some $C$ depending polynomially on $k$, $p$, $r$, $q$, and {  $\|\mu\|_{W^{1,2}(\mathbb C)}$}. Most importantly, the Sobolev embedding theorems have polynomial blow up at these endpoints, hence \eqref{e:sob-emb} holds with this prescribed convention for $\lesssim$.

\subsection{Proof of Theorem \ref{thm:local}}
We begin with a few preliminary reductions. { The main step in proving Theorem \ref{thm:local} is to estimate from below	\begin{equation}\label{e:lower-bound} \norm{(I- \mu \B)g}_{W^{1,r}(\mathbb C)} \ge c(K,L,M)  \norm{g}_{W^{1,r}(\mathbb C)},\end{equation}
uniformly over $\mu$ satisfying
	\begin{equation}\label{e:mu-KLM} \norm{\mu}_{L^\infty(\mathbb C)} \le \frac{K-1}{K+1}, \quad \norm{\mu}_{W^{1,2}(\mathbb C)} \le L, \quad \norm{\mu}_{W^{1,q}(\mathbb C)} \le M,\end{equation}
with the constant $c(K,L,M)$ taking the precise form of the reciprocal on the right-hand side of \eqref{e:main-claim} below. Furthermore, by Lemma \ref{lemma:invLp}, we can focus our attention on proving \eqref{e:lower-bound} with the lower bound in the \textit{homogeneous} norm $\norm{g}_{\dot W^{1,r}(\mathbb C)}$. We can further assume in the proof of the estimate \eqref{e:lower-bound} that $\mu$ is compactly supported, thus allowing for the existence of principal solutions to the Beltrami equation. Indeed, let $\{\mu_n\}$ be a sequence of compactly supported approximations of $\mu$ such that
	\[ \norm{\mu_n}_{L^\infty(\mathbb C)} = \norm{\mu}_{L^\infty(\mathbb C)}, \quad \mu_n(x) \to \mu(x)  \ \text{a.e.} \ x \in \mathbb C, \quad \norm{ \mu_n - \mu }_{W^{1,s}(\mathbb C)} \to 0, \ s \in \{p,q\}.\]
A simple construction of such an approximation is $\mu_n = \mu \phi_n$ where $\phi_n(x) = \phi( \frac{x}{n} )$, $\phi \in \C_0^\infty(\mathbb C)$ with $\phi(0)=1$. Such $\mu_n$ also satisfies \eqref{e:mu-KLM} after inflating $L$ and $M$ by an absolute constant $A>0$. Then, assuming \eqref{e:lower-bound} has been proved for $\mu$ compactly supported,
	\begin{equation}\label{e:triangle} c(K,AL,AM) \norm{g}_{W^{1,r}(\mathbb C)} \le \norm{ (I-\mu_n \B)g}_{W^{1,r}(\mathbb C)} \le \norm{ (I-\mu \B)g}_{W^{1,r}(\mathbb C)} + \norm{ (\mu-\mu_n) \B g}_{W^{1,r}(\mathbb C)}.\end{equation}
Finally, to check that the last term in \eqref{e:triangle} goes to zero, we must verify that
	\[ \norm{ D(\mu_n-\mu) \B g }_{L^{r}(\mathbb C)} + \norm{ (\mu_n-\mu) D(\B g) }_{L^{r}(\mathbb C)} + \norm{ (\mu_n-\mu) \B g }_{L^{r}(\mathbb C)} \to 0.\]
The second and third terms indeed approach zero by the first two properties of $\mu_n$ and dominated convergence. The first term goes to zero by \eqref{e:sob-emb-part} and the third property of $\mu_n$.
}

We make the further reduction following \cite{astala01}*{pp. 39-40}. { Let $g \in \C_0^\infty(\mathbb C)$ with mean zero. Such $g$ are dense in $W^{1,r}(\mathbb C)$ hence it suffices to establish \eqref{e:lower-bound} for such $g$. By Proposition \ref{p:BC}, $\B g \in W^{n,s}(\mathbb C)$ for all $n \in \mathbb N$ and all $1<s<\infty$. Therefore, setting $h=g-\mu \B g$, since $\mu \in W^{1,q}$ we also have $h \in W^{1,q}(\mathbb C)$. Introduce $w = \mathcal K g$ and obtain that $Dw=(\B g,g)$ to see that} $w$ satisfies the inhomogeneous Beltrami equation
	\[ \overline{\partial} w = \mu \partial w + h.\]
Normalize so that $\norm{h}_{W^{1,q}(\mathbb C)}  {  =} 1$.  Then, \eqref{e:lower-bound} amounts to the \textit{a priori} estimate 
	\begin{equation}\label{e:main-claim} \|D\overline{\partial} w\|_{L^{r}(\mathbb C)} \lesssim \left\{ \begin{array}{cc} 
		\exp\left( C \max\left\{ \tfrac{1}{r-1}, \tfrac{1}{(2-r)^2} \right\} L^2 \right) & p=2; \\
		\exp\left( C \max\left\{ p,\tfrac{1}{p-2}\right\}^2 L^2 \right)\left( 1 + \Vert \mu \Vert_{W^{1,p}(\mathbb C)} \right)^2 & p>2, \end{array} \right.
	\end{equation}
where $C$ depends only on $K$. 
Indeed, the estimates in Theorem \ref{thm:local} are now special cases of \eqref{e:main-claim}. The critical case \eqref{e:local-crit} is recovered when $1<r<2$ and $p=2$; \eqref{e:local-sup} is recovered by $r=p>2$.

{ We conclude these preliminary reductions by showing how the invertibility follows from the lower bound \eqref{e:lower-bound}. First, the lower bound immediately yields the invertibility of $(I-\mu \B)$ on its range, which is indeed closed. It only remains to show the range is the whole space $W^{1,r}(\mathbb C)$, i.e. $(I-\mu \B)$ is surjective. We will use the classical method of continuity \cite[Theorem 5.2]{gt-book} applied to the operators $L_t = I- t \mu \B$ for $t \in [0,1]$. As long as $\mu$ satisfies \eqref{e:mu-KLM}, so does $t\mu$ hence we have the uniform bound
	\[ \norm{L_t g}_{W^{1,r}(\mathbb C)} \ge c(K,L,M) \norm{g}_{W^{1,r}(\mathbb C)}.\]
Furthermore, $L_0=I$ is obviously surjective, hence $L_1=I-\mu\B$ is also.}

\subsubsection{Main line of proof of Theorem \ref{thm:local}}
We proceed to establish \eqref{e:main-claim} modulo the more technical estimates \eqref{e:U1-G}-\eqref{e:V-H} below which are proved in the next subsection. 
Let $f$ be the principal solution to \eqref{eq:belt}. Introduce $u = w \circ f^{-1}$ so by the chain rule,
\begin{equation}
	\label{eq:wu} \begin{aligned} \overline{\partial} w &= \left(\partial u \circ f\right) \overline{\partial} f + \left(\overline{ \partial} u \circ f\right)\overline{ \partial f} \\
	\mu \partial w + h &= \mu\left[ \left(\partial u \circ f\right) \partial f + \left( \overline{ \partial} u \circ f\right) \overline{\overline{ \partial} f} \right]+h. \end{aligned}\end{equation}
Substituting $\mu \partial f$ for $\overline{\partial} f$ in the two instances above, and setting the two right-hand sides equal to each other, one obtains
	\begin{equation}\label{eq:h}\left (\overline{\partial} u \circ f\right) \overline{ \partial f} = \frac{h}{1-|\mu|^2}  \eqqcolon H, \quad \norm{DH}_{L^r(\mathbb C)} \lesssim
1+\|\mu\|_{W^{1,p}(\mathbb C)}.\end{equation}
where the norm estimate follows from \eqref{e:sob-emb}. In light of \eqref{eq:wu}, it remains to show $D[ (\partial u \circ f) \overline{\partial} f ]$ has $L^r$ norm controlled by the bound \eqref{e:main-claim}. By the product rule, we split $D[ (\partial u \circ f) \overline{\partial} f ]=U_1 + U_2$ where
	\[ U_1 = D\left( \partial u \circ f \right) \overline{\partial} f \quad U_2 = \left( \partial u \circ f \right) D\overline{\partial} f.\]
The difficult parts of the proof involve estimating $U_1$ and $U_2$ in terms of the following auxilliary functions
	\[  G = D( \overline{\partial} u \circ f) \overline{\partial f}, \quad \sigma = \log \partial f, \quad \lambda= D \mu + \mu D\sigma, \quad V = \left( \partial u \circ f \right) \partial f.\]
Below, we will prove
\begin{align}
	\label{e:U1-G}&\norm{U_1}_{L^r(\mathbb C)} \lesssim \norm{G}_{L^r(\mathbb C)}; \\
	\label{eq:sigma-reg} &\norm{D \sigma}_{L^p(\mathbb C)} \lesssim \exp \left( C p^2 L^2 \right) \norm{\mu}_{W^{1,p}(\mathbb C)}; \\
	\label{e:V-H} &\norm{V}_{L^s(\mathbb C)} \lesssim \exp \left( C s^2 L^2 \right) \norm{H}_{L^s(\mathbb C)}, \quad 2 \le s < \infty.
\end{align}
Assuming these for now, let us complete the proof of \eqref{e:main-claim}, and thereby Theorem \ref{thm:local}.
Comparing $G$ to $DH$, we see
	\begin{equation}\label{e:HG} DH = D\left[\left( \overline{\partial}u\circ f\right) \e^{\overline{\sigma}}\right] = \left(\overline{\partial} u\circ f\right)\overline{\partial f {\overline{D}} \sigma}+ D\left(\overline{\partial}u\circ f\right) \overline{\partial f}= H \overline{\overline{D}\sigma}+G. \end{equation}
Thus, by \eqref{e:sob-emb-part} and \eqref{eq:sigma-reg} we can provide an effective bound on $U_1$, namely
	\[ \norm{G}_{L^r(\mathbb C)} \lesssim \exp \left( C p^2 L^2 \right) \left(1+\Vert \mu \Vert_{W^{1,p}(\mathbb C)}\right). \]
So as a consequence of \eqref{e:U1-G} and \eqref{eq:sigma-reg}, we have 
\begin{equation}\label{e:1B}\norm{D \overline{\partial}w}_{L^r(\mathbb C)} \lesssim \exp \left( C \max\left\{p^2, \tfrac{1}{r-1} \right\} L^2 \right)\left(1+\|\mu\|_{W^{1,p}(\mathbb C)}\right) + \norm{ U_2 }_{L^r(\mathbb C)}.\end{equation}
For $U_2$, we notice that by replacing $\overline{\partial} f$ with $\mu \partial f$ we can write 
	\begin{equation}\label{e:U2Vlambda} U_2 = \left( \partial u \circ f \right) D(\mu \partial f) = \left( \partial u \circ f \right) \left[ (D\mu) \partial f + \mu D \partial f \right] = V \lambda.\end{equation}
We will apply \eqref{e:V-H} to control $U_2$. To this end, we first assume $r<p$ and let $s= \frac{pr}{p-r}$. Recalling the definition of $H$ in \eqref{eq:h}, applying the Sobolev embedding, and using \eqref{eq:sigma-reg}, we have
	\begin{equation}\label{e:Hlambda} \norm{H}_{L^s(\mathbb C)} \lesssim \norm{h}_{L^s(\mathbb C)} \lesssim 1, \quad \norm{\lambda}_{L^p(\mathbb C)} \lesssim \exp \left( C p^2 L^2 \right)\left( 1 + \norm{\mu}_{W^{1,p}(\mathbb C)} \right). \end{equation}
We can now apply by H\"older's inequality to obtain 
	\begin{equation}\label{e:U2}\norm{U_2}_{L^r(\mathbb C)} \le \norm{V}_{L^s(\mathbb C)} \norm{\lambda}_{L^p(\mathbb C)} \lesssim \exp \left[ C (s^2+p^2) L^2 \right]\left( 1 + \norm{\mu}_{W^{1,p}(\mathbb C)} \right).
\end{equation}
When $p=2$, $r$ is always strictly less than $p$. Furthermore, $s \le \frac{4}{2-r}$, so combining \eqref{e:1B} and \eqref{e:U2} concludes the proof of \eqref{e:main-claim} for $p=2$. We now restrict to $p>2$, in which case $r = \frac{pp'}{2}$ and $s = \frac{p^2}{p-2} \le 3 \max\left\{ p, \tfrac{1}{p-2} \right\}$. In this case \eqref{e:1B} and \eqref{e:U2} imply
	\begin{equation}\label{e:summ1} \norm{\overline{\partial} w}_{\dot W^{1,\frac{pp'}{2}}(\mathbb C)} \lesssim \exp \left( C \max \left \{ p,\tfrac{1}{p-2} \right\}^2 L^2 \right) \left(1+ \|\mu\|_{W^{1,p}(\mathbb C)}\right). \end{equation}
However, by Lemma \ref{lemma:invLp}, $\norm{\overline{\partial} w}_{L^{\frac {pp'}{2}}(\mathbb C)}$ is also controlled by the right hand side of \eqref{e:summ1}. Moreover, since $\partial w = \B(\overline{\partial}w)$ and $\B$ is bounded on $W^{n,s}(\mathbb C)$ by virtue of Proposition \ref{p:BC}, in fact \eqref{e:summ1} holds for the larger nonhomogeneous norm $\norm{D w}_{W^{1,\frac{pp'}{2}}(\mathbb C)}$. Since $p>2$, one can check that $\frac {pp'}{2} > 2$. Therefore the Sobolev embedding shows that $\partial w$ is in fact in $L^\infty(\mathbb C)$ with norm controlled by \eqref{e:summ1}. This will allow us to take $s=\infty$ in \eqref{e:U2} by the identity
	\begin{equation}\label{e:wV} \partial w = \left(\partial u \circ f\right) \partial f + \left( \overline{ \partial} u \circ f\right) \overline{\overline{ \partial} f} =  \left(\partial u \circ f\right) \partial f + \left( \overline{ \partial} u \circ f\right) \overline{\mu \partial f}=V +\overline{\mu} H.\end{equation}
The first equality above is simply the chain rule and second uses the fact $\overline{\partial } f = \mu \partial f$. Therefore the $L^\infty(\mathbb C)$ norm of $V$ also obeys the bound \eqref{e:summ1}. Therefore,
	\[ \norm{U_2}_{L^p(\mathbb C)} \le \norm{V}_{L^\infty(\mathbb C)} \norm{\lambda}_{L^p(\mathbb C)} \lesssim \exp \left( C \max \left \{ p,\tfrac{1}{p-2} \right\}^2 L^2 \right) \left(1+ \|\mu\|_{W^{1,p}(\mathbb C)}\right)^2.\]
Applying the above estimate to \eqref{e:1B} with $r=p$ establishes \eqref{e:main-claim} for $p > 2$.

\subsubsection{Proofs of \eqref{e:U1-G}-\eqref{e:V-H}}\label{ss:proof-technical}
\begin{proof}[Proof of \eqref{eq:sigma-reg}]
By \cite{astala-book}*{Theorem 5.2.3} for $\mu \in W^{1,2}(\mathbb C)$, the principal solution $f$ can be written as $\partial f = \e^\sigma$ where $\sigma$ satisfies 
	\begin{equation}\label{eq:sigma} \overline{\partial} \sigma = \mu \partial \sigma + \partial \mu.\end{equation} Therefore, $(I-\mu \B)\partial \sigma=\partial \mu$ so by Lemma \ref{lemma:invLp} we obtain \eqref{eq:sigma-reg}.
\end{proof}

\begin{proof}[Proof of \eqref{e:V-H}]
By change of variable and the quasiconformality of $f$ in the form \eqref{e:Jf-quasi} we have
\begin{equation}\label{e:2B}  \int_{\mathbb C} \abs{V}^s =\int_{\mathbb C} \vert \partial u\circ f\vert^{s} \vert \partial f\vert^s 
 = \int_{\mathbb C} \vert \partial u\vert^{s} \vert \partial f \circ f^{-1} \vert^s  \vert Jf^{-1} \vert\lesssim \int_{\mathbb C} \vert \partial u\vert^{s} \vert Jf^{-1} \vert^{1-\frac{s}{2}}. \end{equation}
We connect back to $H=\left (\overline{\partial} u \circ f\right) \overline{ \partial f}$ through the weighted Lebesgue estimates for $\B$ contained in Proposition \ref{p:BC} with the weight $\omega=\vert Jf^{-1} \vert^{1-\frac{s}{2}}$. Recalling the estimate of $[\omega]_{\A_p(\mathbb C)}$ from Lemma \ref{lemma:moser}, we see
	\[ \norm{\partial u}_{L^s(\omega)}  \lesssim \exp \left( C s^2 L^2 \right) \norm{\overline{\partial} u}_{L^s(\omega)}.\]
Changing variables back and using $|Jf^{-1} \circ f|^{-s/2} \le |\partial f|^s$ establishes \eqref{e:V-H}.
\end{proof}

\begin{proof}[Proof of \eqref{e:U1-G}]
We follow a similar outline to the proof of \eqref{e:V-H} that was just given. First, by the chain rule and \eqref{e:Jf-quasi},
	\[ \abs{U_1} = \abs{ D(\partial u \circ f) \overline{\partial} f } 
    \lesssim \abs{(D\partial u \circ f)} \left(\abs{ \overline{\partial} f } + \abs{ \partial f } \right) \abs{ \overline{\partial} f } \lesssim \abs{(D\partial u \circ f)} \abs{ Jf }.\]
Changing variables and introducing $\B$ according to \eqref{e:twine} we have
	\begin{equation}\label{e:SobBeurling}  \left\|U_1 \right\|_{L^r(\mathbb C)} \lesssim \norm{D \B(\overline{\partial}u)}_{L^r(\omega)}, \quad \omega = \abs{Jf^{-1}}^{1-r}. \end{equation}
	By Lemma \ref{lemma:moser}, $|Jf^{-1}|^{1-r} \in \A_r(\mathbb C)$ with the dependence \eqref{e:Jf-Ap-2} so the weighted Sobolev estimate for $\B$ (Proposition \eqref{p:BC} with $n=1$ and $p=r$) implies
	\begin{equation}\label{e:1A} \begin{aligned} \norm{ U_1 }_{L^r(\mathbb C)} &\lesssim \exp \left( C \max\left\{r^2, \tfrac{1}{r-1} \right\} L^2 \right) \left( \int_{\mathbb C} |D\overline{\partial} u |^r |J{f^{-1}}|^{1-r} \right)^{\frac 1r} \\	&\lesssim \exp \left( C \max\left\{p^2, \tfrac{1}{r-1} \right\} L^2 \right)\left( \int_{\mathbb C} |D\overline{\partial} u \circ f|^r |Jf|^{r} \right)^{\frac 1r}, \end{aligned}\end{equation}
where the last line followed from changing variables back and the fact that $r \le p$. The proof will be concluded by the pointwise estimate
\begin{equation}\label{e:pointG} 
	\abs{ J f  } \abs{D \overline{\partial} u \circ f } \lesssim \abs{G},
\end{equation}
which we now proceed to establish. First, the chain rule yields $D \left( \overline{\partial} u \circ f \right) = Jf \left( D\overline{\partial} u \circ f \right)$. Inverting $Jf$ and multiplying by $\abs{Jf}$ yields
	\begin{equation}\label{e:compG} \abs{Jf} \left( D\overline{\partial} u \circ f \right) = \abs{Jf} (Jf)^{-1} D \left( \overline{\partial} u \circ f \right).
\end{equation}
Thus, upon precisely computing
	\[ (Jf)^{-1} = \frac{1}{\abs{J f}} \left[ \begin{array}{cc} 
 \overline{ \partial f}   & - \overline{ \overline{\partial} f } \\ - \overline{\partial} f & \partial f 
\end{array}\right] \]
we obtain from quasiconformality that pointwise, the norm of the matrix $\abs{Jf}(Jf)^{-1}$ is controlled by $\abs{\partial f}$. Then recalling that $G = D( \partial u \circ f) \overline{\partial f}$, the desired estimate \eqref{e:pointG} is a consequence of \eqref{e:compG}.
\end{proof}

\subsection{Proof of Corollary \ref{cor:cacc}}\label{ss:cacc}
We will again give a unified proof of the critical and supercritical cases. To this end, let $p \ge 2$ and consider
	\[ q \in \left\{ \begin{array}{cc} (2,\infty), & p=2, \\ \left[ \frac{p}{p-1},\infty\right) & p > 2, \end{array} \right. \quad r =\frac{q}{q-1}.\]
Then, \eqref{eq:cacc-1} and \eqref{eq:cacc-1-sup} {  are special cases of}
	\begin{equation}\label{eq:cacc-1-un}\|\eta (Df)\|_{L^q(\mathbb C)} \lesssim \|(D\eta)f\|_{L^q(\mathbb C)}, \end{equation}
while \eqref{eq:cacc-2} and \eqref{eq:cacc-2-sup} {  are special cases of} 
	\begin{equation}\label{eq:cacc-2-un} \|\eta (D^2 f)\|_{L^r(\mathbb C)} \lesssim \|(D \eta)f\|_{L^r(\mathbb C)} +  \|(D \eta)(Df)\|_{L^r(\mathbb C)} +  \|(D^2\eta)f\|_{L^r(\mathbb C)}. \end{equation}
In this proof, we will not track the constants implicit in \eqref{eq:cacc-1-un} and \eqref{eq:cacc-2-un} but they are an absolute constant multiple of the corresponding estimates for $(I - \mu \B)^{-1}$ in Lemma \ref{lemma:invLp} and Theorem \ref{thm:local}.
First we will prove \eqref{eq:cacc-1-un}. Since $f \in L^q_{\mathrm{loc}}$, we claim that we can extend \eqref{e:BD} to all $\psi$ of the form $\eta \phi$ for $\eta \in \C_0^\infty({ \Omega})$ and $\phi \in W^{1,r}(\mathbb C)$.
{ Let us introduce the shorthand $\H_\mu$ for the distributional Beltrami derivative $\overline{\partial}-\mu \partial$, and $\H_\mu^*$ its formal adjoint, initially defined on functions $\varphi \in \C^\infty_0(\Omega)$ by
	\[ -\H_\mu^*\varphi = \overline \partial \varphi - \partial(\mu \varphi).\]}
So, for such $\eta$ and $\phi$, by H\"older's inequality and Sobolev embedding,
	\begin{equation}\label{e:Hmuest} \|{ \H_\mu^*}(\eta \phi)\|_{L^{r}(\mathbb C)}
	\lesssim \norm{\eta \phi}_{W^{1,r}(\mathbb C)}+\left(\norm{\eta\mu}_{L^\infty(\mathbb C)} + \norm{\eta\mu}_{W^{1,p}(\mathbb C)}\right) \norm{\phi}_{W^{1,r}(\mathbb C)}.\end{equation}
Therefore, \eqref{e:BD} holds for all such $\psi =\eta \phi$ by density.
Now, fix $\eta \in \C_0^\infty({ \Omega})$, let $\chi \in \C_0^\infty({ \Omega})$ with $\chi \equiv 1$ on $\supp \eta$, and set $\nu = \mu \chi$. Let $g \in C^\infty_0({ \Omega})$ and let $v$ satisfy
	\begin{equation}\label{eq:belt-c} \overline{\partial}v - \nu \partial v = g.\end{equation}
Then, Lemma \ref{lemma:invLp} and Theorem \ref{thm:local} respectively imply
	\begin{equation}\label{eq:sob-est} { \|Dv\|_{L^{r}(\mathbb C)}} \lesssim \|g\|_{L^{r}(\mathbb C)}, \quad { \|Dv\|_{W^{1,r}(\mathbb C)}} \lesssim \|g\|_{W^{1,r}(\mathbb C)}
	. \end{equation}
Furthermore, applying $\partial$ to \eqref{eq:belt-c} we obtain
	\[ \partial g = {- \H_\nu^*}(\partial v).\]
{ The same argument used to show \eqref{e:Hmuest} also shows that $\H_\nu^*$ can continuously be extended to $W^{1,r}(\mathbb C)$ and so the above display is well-defined.}
Pair the above display with $F=\eta f$ to obtain
	\[ \left\l F,\partial g \right\r = -\left\l \eta f,\H_\nu^* \partial v \right\r = -\left\l f,\H_\nu^*(\eta \partial v) \right\r + \left\l f, \left( \overline{\partial}\eta - \nu \partial \eta \right)\partial v \right\r.\] 
The precise form of $\chi$ shows that $\nu \eta = \mu \chi \eta = \mu \eta$ so that, since $\partial v \in W^{1,r}(\mathbb C)$ by \eqref{eq:sob-est}, the {  first term in the right hand side} in the above display vanishes by \eqref{e:BD}. On the other hand, the first estimate in \eqref{eq:sob-est} shows that
	\[\abs{\left\l f, \left( \overline{\partial}\eta - \nu \partial \eta \right)\partial v \right\r} \lesssim \|(D\eta) f\|_{L^q(\mathbb C)} \|g\|_{L^r(\mathbb C)}.\]
Therefore, combining the previous two displays and using compact support,
	\[ |\l \partial F, g \r| \lesssim \|(D \eta) f\|_{L^q(\mathbb C)} \|g\|_{L^r(\mathbb C)},\]
for all $g \in C^\infty_0$. This establishes that $\norm{ \eta \partial f }_{L^q}$ is bounded by the right hand side of \eqref{eq:cacc-1-un}. To prove the same for $\eta \overline{\partial}f$, simply notice that by \eqref{e:BD}, there holds for any $h \in \C^{\infty}_0({ \Omega})$,
	\[ \left| \left\l \eta \overline{\partial}f,h \right \r \right| \le \left| \left\l \mu\eta \partial f,h \right \r \right|. \]

We proceed to prove \eqref{eq:cacc-2-un}. Notice that \eqref{eq:cacc-1-un}, which we just proved, together with the assumption $f \in L^q_{\mathrm{loc}}$ { establishes that $f \in W^{1,q}_{\mathrm{loc}}$. Hence the equality $\H_\mu f=0$ can be upgraded from holding distributionally to holding almost everywhere}. Furthermore, $F=\eta f$ indeed belongs to $W^{1,s}(\mathbb C)$ for every $s \le q$ and
	\[ \overline{\partial} F -\nu \partial F= (\overline{\partial}\eta-\nu\partial\eta)f.\]
Using Sobolev embedding, the right hand side of the above display belongs to $W^{1,r}$ with norm
	\begin{equation}\label{eq:eta-mu-phi-s} \| (\overline{\partial}\eta-\nu\partial\eta)f\|_{W^{1,r}(\mathbb C)} \lesssim \|(D \eta)f\|_{L^r(\mathbb C)} + \|(D\eta)(Df)\|_{L^r(\mathbb C)} + \|(D^2\eta)f\|_{L^r(\mathbb C)}.\end{equation} 
On the other hand, by \eqref{e:twine} and Proposition \ref{p:BC}, 
	\[ \|D^2F\|_{L^{r}(\mathbb C)} \le \left\Vert\overline{\partial}F\right\Vert_{W^{1,r}(\mathbb C)} + \|\partial F\|_{W^{1,r}(\mathbb C)} \lesssim \left\Vert\overline{\partial}F\right\Vert_{W^{1,r}(\mathbb C)}.\]
Furthermore, Theorem \ref{thm:local} applies to give
	\[ \norm{ \overline{\partial} F }_{W^{1,r}(\mathbb C)} \lesssim \| (\overline{\partial}\eta-\nu\partial\eta)f\|_{W^{1,r}(\mathbb C)} \]
and the proof of \eqref{eq:cacc-2-un} is concluded by \eqref{eq:eta-mu-phi-s}.

Finally, we establish the concluding statement that $f \in W^{2,r}_{\mathrm{loc}}({ \Omega})$. To this end, we iterate the first Caccioppoli inequality \eqref{eq:cacc-1-un} to show that $f \in W^{1,s}_{\mathrm{loc}}({ \Omega})$ for every $1<s<\infty$. Indeed, let $f \in L^{q}_{\mathrm{loc}}({ \Omega})$ and suppose $s>q>1$. By \eqref{eq:cacc-1-un}, $f \in W^{1,q}_{\mathrm{loc}}({ \Omega})$ so by Sobolev embedding, $f \in L^{q_1}_{\mathrm{loc}}(E)$ for every $q_1$ satisfying 
	\[ q_1 < \left\{ \begin{array}{cc} \frac{2q}{2-q}, & q \le  2; \\ \infty, & q > 2. \end{array} \right. \]
In particular, we can choose $q_1>2$ so that a second application of \eqref{eq:cacc-1-un} implies $f \in W^{1,q_1}_{\mathrm{loc}}({ \Omega})$ and by Sobolev embedding $f \in L^{q_2}_{\mathrm{loc}}({ \Omega})$ for every $q_2 < \infty$. One final application of \eqref{eq:cacc-1-un} proves the claim. In particular, $f \in W^{1,r}_{\mathrm{loc}}({ \Omega})$ so the right hand side of \eqref{eq:cacc-2-un} is finite hence $f \in W^{2,r}_{\mathrm{loc}}({ \Omega})$.

{ 
\subsubsection{Remark on the bilipschitz nature of principal solutions}\label{ss:bilip}
Recall the two complementary formulas for principal solutions $f$ satisfying $\overline{\partial} f = \mu \partial f$, from \eqref{e:principal-rep-1} and the proof of \eqref{eq:sigma-reg}. Let us suppose $\mu$ is supported in a compact set $K$. Without resorting to the full strength of Theorem \ref{thm:local}, we can obtain the following crude bilipschitz estimate for $f$. Recall that $\partial f= \e^{\sigma}$ where $\sigma$ satisfies 
	\[ \overline{\partial} \sigma = \mu \partial \sigma + \partial \mu.\]
According to \eqref{eq:sigma-reg}, we conclude that $\norm{ D \sigma}_{L^p(\mathbb C)} \lesssim \norm{\mu}_{W^{1,p}(\mathbb C)}$. Furthermore, since $\mu$ is supported in $K$, so is $\overline{\partial} \sigma$. Therefore, $\sigma=\mathcal K( \overline{\partial} \sigma )$, and by standard $L^p$ estimates for $\mathcal K$ \cite[Theorem 4.3.11]{astala-book}, we infer
	\begin{equation}\label{e:sigma-infty}\norm{\sigma}_{L^\infty(\mathbb C)} \lesssim \sqrt{ \norm{\overline{\partial}\sigma}_{L^{\frac p{p-1}}(K)} \norm{\overline{\partial} \sigma}_{L^p(K)} } \lesssim |K|^{\frac{p-2}{2p}} \norm{\mu}_{W^{1,p}(\mathbb C)}, \end{equation}
which provides upper and lower bounds on $\abs{Df}$ which depend exponentially on the bound in \eqref{e:sigma-infty}. Applying Theorem \ref{thm:local} and \eqref{e:principal-rep-1}, we obtain an improvement of this \textit{upper} bound which depends polynomially on $\norm{\mu}_{W^{1,p}(\mathbb C)}$ and is in fact independent of the size of $K$. It would be interesting to see if our method could also be used to obtain an improvement on the lower bound, but at this time we do not see how to do so.
}

\section{Proof of Theorem \ref{thm:global} and Corollary \ref{c:global}}\label{s:global}
Throughout, let $2 < p <\infty$ and $\Omega$ be a simply connected bounded domain in $\mathbb C$. The restriction to simply connected domains is for convenience, but can be lifted to finitely connected domains. In this section we still consider global solutions to the Beltrami equation, but we assume $\mu$ to be of a special form, linked to $\Omega$ in the following way. Let $\mu$ satisfy
	\begin{equation}\label{eq:mu-klO} \supp \mu \subset \overline{\Omega}, \quad \left\Vert \mu\right\Vert_\infty=k=\frac{K-1}{K+1} < 1, \quad \mu \in W^{1,p}(\Omega).\end{equation}
In proving Theorem \ref{thm:global}, the Beurling-Ahlfors operator $\B$ is replaced by its compression to a domain $O$, defined by
	\[ \B_O = \mathbf 1_{ \overline{O}} \B( \cdot \mathbf 1_{\overline{O}} ).\]
Lebesgue space estimates for $\B_O$ follow from those established for $\B$ in Proposition \ref{p:BC}, by simply {  considering} functions supported in $\overline{O}$. However, the Sobolev estimates must be approached differently. The systematic study of unweighted Sobolev estimates for compressions of Calder\'on-Zygmund operators was taken up by Prats and Tolsa in \cite{prats-tolsa}.  A completely different approach by the authors has led to general $T(1)$ theorems and weighted estimates in Sobolev spaces in \cite{diplinio23domains}. The necessary ingredients from these papers are extracted in Lemma \ref{lemma:Bp} below.

\begin{definition}\label{def:Bp} We first define two norms for functions $f: \Gamma \to \mathbb C$ where $\Gamma$ is a piecewise continuous curve. Say $f$ is Dini continuous if the norm
	\[ \norm{f}_{\mathrm{Dini}} = \int_0^1 \sup_{|x-y|\le t} \abs{f(x)-f(y)} \, \frac {\d t}{t} \]
is finite. Define the homogeneous Besov norm on $\Gamma$ by
	\[ \norm{f}_{\dot B^{1-\frac 1q}_{q,q}(\Gamma)} = \left( \int_{\Gamma}\int_{\Gamma} \frac{ |f(x)-f(y)|^q}{|x-y|^q} \, \d s(x) \, \d s(y) \right)^{\frac 1q} < \infty,\]
where $\d s$ is the surface measure on $\Gamma$. {  Following  \cite{astala-prats-saksman}, we say a bounded simply connected domain $O$ is a Dini-smooth domain if there exists a bilipschitz parameterization $A : \mathbb T \to \partial \Omega$ such that $\norm{A'}_{\mathrm{Dini}} < \infty$.
Dini-smooth domains are the natural setting for higher order conformal estimates. In particular, we can take the canonical parameterization to be the trace of Riemann map of $\Omega$; see Lemma \ref{lemma:conf} below. Given $q> 2$, we say a Dini-smooth domain $O$ is a $\Bes_q$ domain if there exists a bilipschitz $A:\mathbb T \to \partial \Omega$ such that
	\[ \norm{A'}_{\dot B^{1-\frac 1q}_{q,q}(\mathbb T)} <\infty.\]
Letting $N_O$ denote the normal vector to the boundary $\partial O$, there holds
	\begin{equation}\label{e:param-equiv} \norm{N_O}_{\dot B^{1-\frac 1q}_{q,q}(\partial O)} \lesssim \norm{A'}_{\dot B^{1-\frac 1q}_{q,q}(\mathbb T)} \lesssim \norm{N_O}_{\dot B^{1-\frac 1q}_{q,q}(\partial O)},\end{equation}}
with implicit constants in \eqref{e:param-equiv} depending only on the bilpschitz character of $A$; see \cite{cruz-tolsa}*{Lemmata 3.1 and 3.3}. In light of \eqref{e:param-equiv}, we define 
	\[ \norm{O}_{\Bes_q} = \norm{N_O}_{\dot B^{1-\frac 1q}_{q,q}(\partial O)}. \]
\end{definition}

\subsection{Standing assumptions and implicit constants}\label{ss:assumptions}
Let us fix some parameters and establish some notational conventions for the remainder of this section. Henceforth, let $2<r \le p$, $\Omega$ be a bounded simply connected $\Bes_p$ domain, and $\mu$ satisfy \eqref{eq:mu-klO}. Furthermore, let $f$ be the principal solution of \eqref{eq:belt}, set $O=f(\Omega)$.

The shorthand $\lesssim$ and $\sim$ will denote one or two-sided inequalities with implicit dependence on { lower-order terms, generically understood. The lowest order terms are $[\Omega]_{\mathrm{Dini}}$, $[O]_{\mathrm{Dini}}$, $\diam \Omega$, $\diam O$, $p$, $r$, and $K$; these are nearly always omitted. In general, we only track the dependence on the highest order terms $\norm{\Omega}_{\Bes_p}$, $\norm{f(\Omega)}_{\Bes_p}$ and $\norm{\mu}_{W^{1,p}(\Omega)}$, though lower order terms such as $\norm{\mu}_{W^{1,r}(\Omega)}$, $\norm{\Omega}_{\Bes_2}$ and $\norm{f(\Omega)}_{\Bes_2}$ may be present from time to time for clarity.}  
Furthermore, $\mathcal G$ and $\mathcal E$ will be generic functions such that $\mathcal G$ and $\log \mathcal E$ have polynomial growth, which are implicitly determined by lower order parameters.

\subsection{Weighted estimates for compressions}
To apply the same strategy as in the proof of Theorem \ref{thm:local}, we will need estimates for $\B_O$ on a certain weighted Sobolev space $W^{1,p}(O,\omega)$. It is known that the Besov norm of the boundary normal introduced above is precisely connected to \textit{unweighted} Sobolev estimates for $\B_O$; consult the following references \cites{cruz-tolsa,tolsa2013regularity,prats17}. In particular,
	\begin{equation}\label{e:NO} \left\Vert \B_O \right\Vert_{\mathcal L(W^{1,p}(O))} \sim 1 + \left\Vert O \right \Vert_{\Bes_p}, \quad p>2. \end{equation}
An analog of the quantitative geometric characterization \eqref{e:NO} for $\norm{ \B_O}_{\mathcal L(W^{1,p}(O,\omega))}$ is not currently available, though qualitatively they are equivalent. In fact, the following Lemma demonstrates the sharpness of our assumption on $O=f(\Omega)$ in Theorem \ref{thm:global}.

\begin{lemma}\label{lemma:sharp}
If $I-\mu \B_\Omega$ is invertible on $W^{1,p}(\Omega)$, then
	\[ \B_O:W^{1,p}(O,\omega) \to W^{1,p}(O,\omega), \quad \omega = \abs{Jf^{-1}}^{1-p}.\]
\end{lemma}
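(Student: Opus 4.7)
The plan is to transfer the hypothesized invertibility of $I-\mu\B_\Omega$ on $W^{1,p}(\Omega)$ to the boundedness of $\B_O$ on $W^{1,p}(O,\omega)$ via a pullback by the principal solution $f$, essentially reversing at the Sobolev level the Astala--Iwaniec--Saksman scheme recalled in Lemma \ref{lemma:invLp}. Given $h\in W^{1,p}(O,\omega)$, I would introduce the auxiliary potential
\[\tilde G(z):=\mathcal K(h\mathbf 1_O)(f(z)),\qquad z\in\mathbb C,\]
and compute, via the chain rule together with $\overline\partial f=\mu\partial f$, $\partial\mathcal K(h\mathbf 1_O)=\B(h\mathbf 1_O)$, and $\overline\partial\mathcal K(h\mathbf 1_O)=h\mathbf 1_O$, the identities
\begin{align*}
\overline\partial\tilde G&=(\B_Oh\circ f)\mu\partial f+(h\circ f)\overline{\partial f},\\
\partial\tilde G&=(\B_Oh\circ f)\partial f+(h\circ f)\overline\mu\overline{\partial f}
\end{align*}
on $\Omega$. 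Subtracting $\mu$ times the second from the first eliminates $\B_Oh\circ f$ and yields the Beltrami-type identity
\[(I-\mu\B_\Omega)\overline\partial\tilde G=(h\circ f)(1-|\mu|^2)\overline{\partial f}=:R\qquad\text{on }\Omega,\]
in which both sides are supported in $\overline\Omega$ because $\overline\partial f$ and $(h\circ f)$ vanish outside.

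By the standing hypothesis, $\overline\partial\tilde G=(I-\mu\B_\Omega)^{-1}R\in W^{1,p}(\Omega)$ with norm controlled by $\|R\|_{W^{1,p}(\Omega)}$. I would then invoke the unweighted Sobolev boundedness of $\B_\Omega$ on $W^{1,p}(\Omega)$ provided by the $\Bes_p$ character of $\Omega$, see \eqref{e:NO}, to control $\partial\tilde G|_\Omega=\B_\Omega\overline\partial\tilde G$ in $W^{1,p}(\Omega)$. Solving the $\partial$-identity for $\B_Oh$ then gives the representation
\[\B_Oh\circ f=\frac{\partial\tilde G-(h\circ f)\overline\mu\overline{\partial f}}{\partial f}\qquad\text{on }\Omega,\]
from which $\B_Oh$ is recovered via pushforward by $f^{-1}$. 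The verification that this representation belongs to $W^{1,p}(O,\omega)$ rests on three qualitative facts available from the standing assumption: the identity $\partial f=\e^\sigma$ with $\sigma\in W^{1,p}(\mathbb C)\hookrightarrow L^\infty(\mathbb C)$ ensures $|\partial f|$ is bounded above and below on $\overline\Omega$; Prats' theorem \cite{prats19}*{Theorem 1.1} yields $f\in W^{2,p}(\Omega)\hookrightarrow C^{1,1-2/p}(\overline\Omega)$; and consequently $\omega=|Jf^{-1}|^{1-p}$ is bounded above and below on $\overline O$, so that $W^{1,p}(O,\omega)\cong W^{1,p}(O)$ with comparable norms.

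The hard part will be establishing the estimate $\|R\|_{W^{1,p}(\Omega)}\lesssim\|h\|_{W^{1,p}(O,\omega)}$. A change of variables shows that the leading derivative term $(\partial h\circ f)\partial f(1-|\mu|^2)\overline{\partial f}$ has $L^p(\Omega)$-norm precisely $\|\partial h\|_{L^p(O,\omega)}$, since $(1-|\mu|^2)|\partial f|^2=|Jf|$ and the weight $|Jf^{-1}|^{1-p}$ pulls back under $w=f(z)$ with the correct power; the remaining terms from the product rule, involving $D^2f$ and $D\mu$, are absorbed using $f\in W^{2,p}(\Omega)$, $\mu\in W^{1,p}(\Omega)$, and the Sobolev embedding $W^{1,p}(\Omega)\hookrightarrow L^\infty(\Omega)$ valid for $p>2$. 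Once this is in hand, the pushforward of the representation through $f^{-1}$, together with the bi-Lipschitz character of $f^{-1}\colon\overline O\to\overline\Omega$, yields $\B_Oh\in W^{1,p}(O,\omega)$ with the required norm control.
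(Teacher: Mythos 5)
Your proposal is correct in outline, but its machinery is largely superfluous, and in fact the observations you record at the end render the heavy lifting you do in the middle unnecessary. You establish (via $\log\partial f\in W^{1,p}$ and $f\in W^{2,p}(\Omega)$) that the weight $\omega=|Jf^{-1}|^{1-p}$ is bounded above and below on $\overline O$, so that $W^{1,p}(O,\omega)\cong W^{1,p}(O)$ with comparable norms, and that $O=f(\Omega)$ is a $\Bes_p$ domain. But once these two facts are in hand, the conclusion is immediate from the unweighted estimate \eqref{e:NO}: one simply pulls $\omega$ outside the integral, applies $\norm{\B_O}_{\mathcal L(W^{1,p}(O))}\lesssim 1+\norm{O}_{\Bes_p}$, and reinserts the weight. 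That shortcut is precisely Lemma \ref{lemma:Bp}.i, and it is exactly what the paper does. By contrast, your reverse Astala--Iwaniec--Saksman construction -- the auxiliary potential $\tilde G=\mathcal K(h\mathbf 1_O)\circ f$, the Beltrami identity $(I-\mu\B_\Omega)\overline\partial\tilde G=R$, the inversion, and the change-of-variables estimate for $\|R\|_{W^{1,p}(\Omega)}$ -- is a long detour whose only purpose is to recover an estimate that, after the weight equivalence, is tautological. The construction is not wrong (modulo a check that $\B(\overline\partial\tilde G)=\partial\tilde G$ for this $\tilde G$), it just adds no content beyond the two endpoint observations.

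One technical correction: you invoke $\partial f=\e^\sigma$ with $\sigma\in W^{1,p}(\mathbb C)$, but this global statement requires $\mu\in W^{1,p}(\mathbb C)$, and under the standing assumptions $\mu$ is only in $W^{1,p}(\Omega)$ with support in $\overline\Omega$ -- without a vanishing trace on $\partial\Omega$, the zero extension need not be Sobolev on $\mathbb C$. The correct statement, which is what the paper uses, is the local one from Proposition \ref{prop:principal}: $\log\partial f\in W^{1,p}(\Omega)$, and then the Morrey/Sobolev embedding on the $\Bes_p$ (hence Lipschitz) domain $\Omega$ gives the $L^\infty$ bound on $|\partial f|^{-1}$. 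This does not break your argument, but you should replace the global claim by this local one.
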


The proof is postponed until \S \ref{ss:sharp} below. Concerning quantitative geometric conditions on $O$, using the $T(1)$ theorems developed by the authors in \cite{diplinio23domains}, we can give some alternatives of varying of degrees of sharpness.

\begin{lemma}\label{lemma:Bp} 
Set $\omega = \left\vert Jf^{-1} \right\vert^{1-p}$. Then, 
\begin{itemize} 
	\item[i.] $\norm{ \B_O}_{\mathcal L(W^{1,p}(O,\omega))} \lesssim \left(\|Jf\|_\infty \|Jf^{-1} \|_\infty \right)^{1-\frac{1}{p}} \left( 1+ \left\Vert O \right\Vert_{\Bes_p} \right)$.
	\item[ii.] $\norm{ \B_O}_{\mathcal L(W^{1,p}(O,\omega))} \lesssim  1 + \frac{\left\Vert \B_O(\mathbf 1) \right\Vert_{W^{1,p}(O,\omega)}}{ \norm{\mathbf 1_O}_{L^p(O,\omega)} } $.
	\item[iii.] For any $\ep>0$, $\norm{ \B_O}_{\mathcal L(W^{1,p}(O,\omega))} \lesssim  \left( 1 + \left\Vert O \right\Vert_{\Bes_{p+\ep}}\right)\mathcal E \left(\ep^{-1} \right)$.
\end{itemize}
\end{lemma}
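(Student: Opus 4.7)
The lemma has three parts of escalating depth. Part (i) is an unweighting, part (ii) invokes the weighted $T(1)$ theorem of \cite{diplinio23domains}, and part (iii) combines these through H\"older interpolation in the exponent.

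\textbf{Part (i).} Since $|Jf^{-1}(f(z))|=|Jf(z)|^{-1}$, the weight $\omega$ satisfies the pointwise sandwich $\|Jf^{-1}\|_\infty^{1-p}\le \omega\le \|Jf\|_\infty^{p-1}$, hence $W^{1,p}(O,\omega)$ and the unweighted $W^{1,p}(O)$ have equivalent norms with ratio bounded by $(\|Jf\|_\infty\|Jf^{-1}\|_\infty)^{1-1/p}$. Inserting the geometric characterization \eqref{e:NO} then yields the claim.

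\textbf{Part (ii).} I would invoke the weighted $T(1)$ theorem of \cite{diplinio23domains} for $\B_O$ on the Dini-smooth domain $O$. Its hypotheses bundle into (a) an $\A_p$ bound on $\omega$, provided by Lemma \ref{lemma:moser} through $\|\mu\|_{W^{1,2}}\le \|\mu\|_{W^{1,r}}$; (b) a weighted weak boundedness property, which for the smooth Beurling kernel reduces to standard Calder\'on--Zygmund estimates twisted by (a); and (c) testing conditions on $\B_O\mathbf 1$ and on the adjoint $\B_O^*\mathbf 1$. Kernel symmetry absorbs the dual testing into a polynomial factor in $\|\mu\|_{W^{1,r}}$, leaving only the ratio $\|\B_O\mathbf 1\|_{W^{1,p}(O,\omega)}/\|\mathbf 1_O\|_{L^p(O,\omega)}$ explicit. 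The technical crux, and the main obstacle of the lemma, is achieving this clean separation: verifying that only the $\B_O\mathbf 1$ testing retains explicit dependence on the boundary geometry of $O$, while weak boundedness, adjoint testing, and $\A_p$ characteristics all collapse uniformly into the factor $\mathcal E(\|\mu\|_{W^{1,r}(\Omega)})$.

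\textbf{Part (iii).} By (ii), it suffices to bound $\|\B_O\mathbf 1\|_{W^{1,p}(O,\omega)}$ in terms of the Besov character of $O$ at a slightly larger exponent. I would trade the pair $(p,\omega)$ for $(p+\ep,|Jf^{-1}|^{1-(p+\ep)})$ via H\"older's inequality with exponents $(p+\ep)/p$ and $(p+\ep)/\ep$, using the algebraic splitting $\omega=(|Jf^{-1}|^{1-(p+\ep)})^{p/(p+\ep)}|Jf^{-1}|^{\ep/(p+\ep)}$ and reducing the residual integral $\int_O|Jf^{-1}|$ to $|\Omega|$ by the change of variable $w=f(z)$. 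This yields
\[ \|\B_O\mathbf 1\|_{W^{1,p}(O,\omega)}\lesssim_\ep \|\B_O\mathbf 1\|_{W^{1,p+\ep}(O,|Jf^{-1}|^{1-(p+\ep)})}, \]
and part (i) at exponent $p+\ep$ controls the right-hand side by $(1+\|O\|_{\Bes_{p+\ep}})\mathcal E(\|\mu\|_{W^{1,r}})$, with the Jacobian $L^\infty$ bounds absorbed into $\mathcal E$ via the supercritical Sobolev estimate of Theorem \ref{thm:local} and its analogue for $f^{-1}$. The $\ep^{-1}$ dependence enters through the exponents in the H\"older split.
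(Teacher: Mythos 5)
Parts (i) and (ii) of your proposal match the paper's argument in substance: (i) is the same unweighting by $L^\infty$ bounds on $\omega$ plus \eqref{e:NO}, and (ii) invokes the weighted $T(1)$ theorem from \cite{diplinio23domains} together with an $\A_p$ extension of the Jacobian weight. One small slip in (ii): the $\A_p$ extension has to come from Lemma \ref{lemma:moser2}, not Lemma \ref{lemma:moser}. Because $\mu$ is only assumed to lie in $W^{1,p}(\Omega)$ and be supported in $\overline{\Omega}$ (no trace condition on $\partial\Omega$), the global $\mu$ is in general \emph{not} in $W^{1,2}(\mathbb C)$, so Lemma \ref{lemma:moser} does not apply directly; Lemma \ref{lemma:moser2} is designed to produce the required global $\A_p$ extension from purely interior data.

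Part (iii) is where you diverge, and your route has a genuine gap. After the H\"older split (which agrees with the paper's, with $s=(p+\ep)/\ep$), the paper keeps $\B_O\mathbf 1$ \emph{unweighted} at exponent $p+\ep$, uses \eqref{e:NO} to get $\norm{\B_O\mathbf 1}_{W^{1,p+\ep}(O)}\lesssim 1+\norm{O}_{\Bes_{p+\ep}}$, and controls the leftover factor $(\int_O\omega^s)^{1/(ps)}$ by an $\A_2$ argument on an extension of $\omega^s$, yielding $\lesssim\mathcal G([\tilde\upsilon]_{\A_2})(\int_O\omega)^{1/p}=\mathcal G\cdot\norm{\mathbf 1}_{L^p(O,\omega)}$, which is exactly the denominator needed to plug into (ii). You instead keep the weight $\omega_{p+\ep}=|Jf^{-1}|^{1-(p+\ep)}$ attached and then "control the right-hand side by part (i)." But part (i) bounds an \emph{operator norm}, not a testing norm; to use it one picks up the factor $\norm{\mathbf 1}_{W^{1,p+\ep}(O,\omega_{p+\ep})}=\omega_{p+\ep}(O)^{1/(p+\ep)}$, which you never compare to the denominator $\norm{\mathbf 1}_{L^p(O,\omega)}=\omega(O)^{1/p}$ required by (ii). That comparison is not a formality. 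Moreover, routing through part (i) reintroduces $(\norm{Jf}_\infty\norm{Jf^{-1}}_\infty)^{1-1/(p+\ep)}$, and since the only available $L^\infty$ bounds on $\log\partial f$ come from \eqref{e:Fp} of Proposition \ref{prop:principal} (which carries $\norm{O}_{\Bes_p}$ and $\norm{\Omega}_{\Bes_p}$ linearly inside), exponentiating leaks exponential dependence on the Besov boundary data. Remark \ref{rem:unweight} is precisely a warning that this route produces the estimate with exponent $C/(q-2)$ in the boundary data rather than the linear dependence $(1+\norm{O}_{\Bes_{p+\ep}})$ that (iii) asserts. Your claim that the Jacobian $L^\infty$ bounds are absorbed purely into $\mathcal E(\norm{\mu}_{W^{1,r}(\Omega)})$ "via Theorem \ref{thm:local} and its analogue for $f^{-1}$" is therefore not justified: Theorem \ref{thm:local} requires $\mu\in W^{1,r}(\mathbb C)$, which the principal solution's dilatation (supported in $\overline\Omega$ with possibly nonzero boundary trace) does not satisfy, and the boundary data cannot be avoided in the $L^\infty$ bound without the $\A_2$ machinery the paper actually uses.
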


\begin{remark}\label{rem:unweight}
Lemma \ref{lemma:Bp}.i is achieved by appealing to the unweighted estimate \eqref{e:NO}, and  crude bounds on $\norm{\abs{Jf}^{-1}}_\infty$ and $\norm{Jf}_\infty$ can be derived from { \eqref{e:Fp} of} Proposition \ref{prop:principal}. In this way, if the reader is interested in a quick bound for $(I - \mu \B_\Omega)^{-1}$ which bypasses the more difficult weighted Sobolev theory of CZOs that ii. and iii. rely on, one can appeal to the { logarithmic Sobolev  inequality}
	\[ \norm{g}_\infty \lesssim \frac{1}{q-2} \left( 1 + \norm{g}_{W^{1,2}(\Omega)} \right) \log \left( \e + \norm{g}_{W^{1,q}(\Omega)} \right), \quad q > 2,\]
to obtain
	\[ \norm{ \B_O}_{\mathcal L(W^{1,p}(O,\omega))} \lesssim \left( 1+ \norm{O}_{\Bes_q} + \norm{\Omega}_{\Bes_q} + \norm{\mu}_{W^{1,q}(\Omega)} \right)^{\frac{C}{q-2}} \left\Vert O \right\Vert_{\Bes_p}.\]
Notice that the middle factor blows up exponentially as $q \to 2$, while weighted estimates in e.g.\ Lemma \ref{lemma:Bp}.iii and Corollary \ref{c:global} produce an absolute polynomial bound in terms of the boundary data $\norm{\Omega}_{\Bes_p}$ and $\norm{O}_{\Bes_p}$.
\end{remark}

To prove Lemma \ref{lemma:Bp}, we need to verify that the weights $\omega=\abs{Jf^{-1}}^{1-p}$ belong to the appropriate Muckenhoupt $\A_p$ class for the results from \cite{diplinio23domains} to apply. The following adaptation of Lemma \ref{lemma:moser} is proved in \S \ref{ss:moser2}.

\begin{lemma}\label{lemma:moser2}
Let $F \in W^{1,2}_{\mathrm{loc}}(\Omega)$ be a homeomorphism satisfying
	\[ \overline{\partial} F(z) = \mu(z) \partial F(z), \quad z \in \Omega,\]
and further assume that $F(\Omega)$ is a bounded $\Bes_r$ domain. Then, for each $a \in \mathbb R$ and $1<q<\infty$, there exists $\upsilon:\mathbb C \to [0,\infty]$ such that 
	\[ \upsilon = \left \vert JF^{-1} \right \vert^a \mbox{ on } F(\Omega), \]
and
	\begin{equation}\label{e:upsAq} \left[\upsilon\right]_{\A_q(\mathbb C)} \lesssim\mathcal E\left (|a|, \tfrac {1}{q-1}\right). 
	\end{equation}
\end{lemma}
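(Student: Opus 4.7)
The plan is to reduce to Lemma \ref{lemma:moser} via a principal-solution factorization of $F$. First, I would extend $\mu$ from $\Omega$ to $\widetilde\mu\in W^{1,r}(\mathbb C)$ with $\|\widetilde\mu\|_{\infty}\le k$, $\supp\widetilde\mu$ compact, and $\|\widetilde\mu\|_{W^{1,r}(\mathbb C)}\lesssim \|\mu\|_{W^{1,r}(\Omega)}$; since $\Omega$ is Lipschitz by the standing $\mathcal B_p$ hypothesis, a standard Sobolev extension followed by an $L^\infty$ truncation delivers such a $\widetilde\mu$. Let $f$ be the principal solution associated to $\widetilde\mu$. Because $F$ and $f$ satisfy the same equation $\overline{\partial} u = \mu\partial u$ on $\Omega$, the composition $g \coloneqq F\circ f^{-1}$ is a conformal homeomorphism from $f(\Omega)$ onto $F(\Omega)$.

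The second step is to upgrade $g$ to a bi-Lipschitz homeomorphism $\widetilde g$ of $\mathbb C$ whose bi-Lipschitz constants are controlled by $\mathcal E(\|\mu\|_{W^{1,r}(\Omega)})$. By Proposition \ref{prop:principal} applied to $f$, the image $f(\Omega)$ is a $\mathcal B_r$ domain whose character admits an $\mathcal E$-type bound in $\|\mu\|_{W^{1,r}(\Omega)}$, where I use the standing assumption $\Omega\in\mathcal B_p$, which quantitatively implies $\Omega\in\mathcal B_r$ for $r\le p$. Combined with the hypothesis that $F(\Omega)$ itself is $\mathcal B_r$, Kellogg-Warschawski type estimates for conformal maps between Dini-smooth domains force $g$ and $g^{-1}$ to be bi-Lipschitz up to the boundary with quantitative control. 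A global bi-Lipschitz extension $\widetilde g$ of $g$ to $\mathbb C$ is then obtained by reflection across the Dini-smooth boundary $\partial f(\Omega)$ and interpolation with the identity outside a sufficiently large disk.

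Finally, I would set $\widetilde F\coloneqq \widetilde g\circ f$ and define $\upsilon \coloneqq |J\widetilde F^{-1}|^a$; on $F(\Omega) = \widetilde g(f(\Omega))$ this equals $|JF^{-1}|^a$ as the lemma requires. The chain rule yields
\[
\upsilon(w)=\bigl|Jf^{-1}(\widetilde g^{-1}(w))\bigr|^a\,\bigl|J\widetilde g^{-1}(w)\bigr|^a,\qquad w\in\mathbb C,
\]
in which the second factor is pointwise comparable to $1$ by the bi-Lipschitz character of $\widetilde g$, while the first factor is the bi-Lipschitz pullback by $\widetilde g^{-1}$ of the $\mathcal A_q(\mathbb C)$ weight $|Jf^{-1}|^a$ provided by Lemma \ref{lemma:moser} with $L=\|\widetilde\mu\|_{W^{1,2}(\mathbb C)}\lesssim \|\mu\|_{W^{1,r}(\Omega)}$. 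Since bi-Lipschitz pullback preserves the Muckenhoupt class with a controlled effect on the $\mathcal A_q$ characteristic, estimate \eqref{e:upsAq} follows from \eqref{e:Jf-Ap-1}--\eqref{e:Jf-Ap-2}. The hard part will be this second step: extracting \emph{explicit} $\mathcal E$-type bi-Lipschitz bounds for $g$ in terms of $\|\mu\|_{W^{1,r}(\Omega)}$ by tracking the boundary parameterization regularity through Proposition \ref{prop:principal}, and then producing a global bi-Lipschitz extension that respects those bounds.
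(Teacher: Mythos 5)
Your route is genuinely different from the paper's. The paper extends the \emph{function} $\rho=\log\partial F$ from $\Omega$ to $W^{1,2}(\mathbb C)$, applies the Moser--Trudinger estimate (Lemma \ref{lemma:exp}) to put $\abs{\e^{a\rho}}$ in $\A_q(\mathbb C)$, transfers this to the local $\A_q(O)$ class for $\abs{JF^{-1}}^a$ on $O=F(\Omega)$ exactly as in the proof of Lemma \ref{lemma:moser}, and then invokes a quantitative form of Wolff's $\A_q$ \emph{weight extension} theorem (\cite{diplinio23domains}*{Lemma 3.8}) to produce $\upsilon$ on all of $\mathbb C$. You instead propose to extend a \emph{map}: factor $F=g\circ f$ with $f$ a global principal solution and $g$ conformal, extend $g$ to a global bi-Lipschitz $\widetilde g$, and take $\upsilon=\abs{J(\widetilde g\circ f)^{-1}}^a$, exploiting bi-Lipschitz invariance of $\A_q$ together with Lemma \ref{lemma:moser} applied to $f$. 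The map-level factorization and the identity $\upsilon=|Jf^{-1}\circ\widetilde g^{-1}|^a\,|J\widetilde g^{-1}|^a$ are correct, and the quantitative bi-Lipschitz bounds for $g$ up to the boundary do follow from Lemma \ref{lemma:conf} plus Sobolev embedding since $r>2$, with the $\Bes_r$ characters of $f(\Omega)$ and $F(\Omega)$ controlled polynomially in $\norm{\mu}_{W^{1,r}(\Omega)}$ via trace estimates as in the proof of Proposition \ref{prop:principal}.

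The genuine gap is the global bi-Lipschitz extension step. ``Reflection across $\partial f(\Omega)$ and interpolation with the identity outside a large disk'' does not produce a bi-Lipschitz $\widetilde g$ as stated: after reflecting, $\widetilde g$ near the outer boundary of the collar maps onto a collar of $\partial F(\Omega)$, which need not be close to the identity, so a naive radial interpolation can degenerate. What you actually need is a quantitative version of Tukia's bi-Lipschitz extension theorem for embeddings of quasicircles into the plane, which is a nontrivial black box not supplied in the paper; you correctly flag this as the hard part but do not resolve it. The paper sidesteps this entirely: Wolff's theorem extends the weight $\abs{JF^{-1}}^a$ from $O$ to $\mathbb C$ directly, at the level of $\A_q$ classes, with a quantitative statement the authors already have from \cite{diplinio23domains}. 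So both approaches rest on an extension ``black box,'' but the paper's is already in place while yours would require importing and quantifying an additional geometric extension theorem, plus reworking the interpolation argument you sketched.
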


\begin{proof}[Proof of Lemma \ref{lemma:Bp}]
i. follows by pulling the weight $\omega$ outside, applying the unweighted estimate, using \eqref{e:NO}, and then reinserting the weight. To prove ii. and iii., we rely on the weighted Sobolev estimates for Calder\'on-Zygmund operators on domains established in \cite{diplinio23domains}. { In fact, the first statement in \cite[Theorem C]{diplinio23domains} directly implies ii. once we verify $\omega$ belongs to the weight class $\A_{\frac r2}(O)$ with characteristic $[\omega]_{\A_{\frac r2}(O   )}$ controlled by lower-order terms
	\[ [v]_{\A_t(O   )} \coloneqq \sup_{\substack{Q \ \mathrm{cube}\\   |Q \cap O   | >0}} \langle \mathbf 1_O    v \rangle_{Q} \left\langle \mathbf 1_O    v^{-1} \right\rangle_{\frac{1}{t-1},Q}, \]
which coincides with \eqref{def:Ap} when $O    = \mathbb C$. Therefore, letting $\upsilon$ be the extension of $\omega$ provided by Lemma \ref{lemma:moser2} above, this follows from the trivial observation $[\omega]_{\A_{t}(O   )} \le [\upsilon]_{\A_{t}(\mathbb C)}$ and \eqref{e:upsAq}. We will derive iii. from ii. plus a second application of Lemma \ref{lemma:moser2}. Let $\ep>0$ and set $s = \frac{p+\ep}{\ep}$ and take $\tilde\upsilon$ to be the extension of $\omega^s$ provided by Lemma \ref{lemma:moser2}. Let $Q$ be a large cube so that $O \subset Q$ and $|O| \sim |Q|$. Then, for $q=1+s$, we note that $\frac{-s}{q-1}=-1$ so that
	\[ \avg{\mathbf 1_O    \omega^s}_{Q} \le [\tilde \upsilon]_{\A_q(\mathbb C)} \avg{\mathbf 1_O    \omega^{-1}}_Q^{s}.\]
Combining the above display with the trivial consequence of H\"older's inequality, 
	\[ 1 \lesssim \left( \frac{|O|}{|Q|} \right)^2 \lesssim \avg{ \mathbf 1_O    \omega}_Q \avg{\mathbf 1_O \omega^{-1}}_Q,\]
we obtain $\norm{\omega^s}_{L^1(O)} \lesssim [\tilde \upsilon]_{\A_q(\mathbb C)} \norm{\omega}_{L^1(O)}^s$. Therefore, by Ho\"lder's inequality and \eqref{e:NO},
	\[ \norm{\B_O(\mathbf 1) }_{W^{1,p}(O,\omega)} \le \norm{\B_O(\mathbf 1)}_{W^{1,p+\ep}(O)}\norm{\omega^s}_{L^{1}(O)}^{\frac{1}{sp}} \lesssim \left( 1 + \norm{O}_{{  \Bes_{p+\ep}}} \right) \mathcal [\tilde\upsilon]_{\A_q(\mathbb C)}^{\frac{1}{sp}} \norm{\mathbf 1}_{L^p(O,\omega)}. \]
Finally, from \eqref{e:upsAq}, the characteristic of $\tilde \upsilon$ is controlled by $\mathcal E(s)$ and $s \lesssim \ep^{-1}$.
}
\end{proof}

The second consequence of Lemma \ref{lemma:moser2} is the following analogue of Lemma \ref{lemma:invLp}.

\begin{proposition}\label{prop:LpBO}
For each $1<s<\infty$,
	\begin{equation}\label{e:LpBO} \left \Vert (I - \mu\B_\Omega)^{-1} \right \Vert_{\mathcal L(L^s(\Omega))} \lesssim\mathcal E\left(s, \tfrac{1}{s-1} \right).
\end{equation}
\end{proposition}
\begin{proof}
The proof consists of one small modification to the argument leading to \eqref{e:AIS} which we outline. 
Arguing along the lines of the proof of Theorem \ref{thm:local}, it suffices to show $\|\overline{\partial}w\|_{L^s(\Omega)} \lesssim \|h\|_{L^s(\Omega)}$ with the implicit constant controlled as in \eqref{e:LpBO}, where $w$ satisfies
 	\[ \overline{\partial}w = \mu \partial w + h .\]
{ Then one can conclude the invertibility either by the method of continuity as in Theorem \ref{thm:local} or by the method of \cite[Theorem 1]{astala01}.}
Let $f$ be $\mu$-quasiconformal and set $u=w \circ f^{-1}$. The chain rule shows that $\overline{\partial}w =  (\partial u \circ f) \overline{\partial} f + (\overline{\partial} u \circ f)\overline{ \partial f}$ and the equations for $w$ and $f$ imply $(1-|\mu|^2)^{-1} h = (\overline{\partial} u \circ f)\overline{ \partial f}$. Therefore it remains to estimate $(\partial u \circ f) \overline{\partial} f$ by $h$ in $L^s(\Omega)$-norm. Changing variables, using \eqref{e:Jf-quasi} and \eqref{e:twine},
	\[ \int_\Omega \left \vert (\partial u \circ f) \bar\partial f \right \vert^s \lesssim \int_{f(\Omega)} \left \vert \B(\overline{\partial} u) \right \vert^s \left \vert Jf^{-1} \right \vert^{1-s/2}.\]
Letting $\upsilon$ be the extension of $\abs{Jf^{-1}}^{1- \frac s2}$ from Lemma \ref{lemma:moser2}, and applying Proposition \ref{p:BC},
	\[  \int_{f(\Omega)} \left \vert \B(\overline{\partial} u) \right \vert^s \left \vert Jf^{-1} \right \vert^{1-s/2} \le \left \Vert \B(\overline{\partial} u) \right \Vert_{L^s(\mathbb C,\upsilon)}^s \lesssim { [\upsilon]_{\A_s(\mathbb C)}^{\max\{s,\frac{s}{s-1}\}} } \left \Vert \overline{\partial} u \right \Vert_{L^s(\mathbb C,\upsilon)}^s.\]
The proof is concluded by estimating $[\upsilon]_{\A_s(\mathbb C)}$ by \eqref{e:upsAq} and changing variables back to obtain $\left \Vert \overline{\partial} u \right \Vert_{L^s(\mathbb C,\upsilon)}{  \lesssim}\|h\|_{L^s(\Omega)}$ {  as in the proof \eqref{e:V-H}}.
\end{proof}

\subsection{Conformal estimates}
The final ingredients for the proof of Theorem \ref{thm:global} are the following conformal estimates which are qualitatively contained in \cite{astala-prats-saksman}. The first one is a quantification of \cite{astala-prats-saksman}*{Theorem 1.2}.
\begin{lemma}\label{lemma:conf}
Let $O_1$ and $O_2$ be simply connected $\Bes_p$ domains, and $g$ conformally map $O_1$ onto $O_2$. Then $\log g' \in W^{1,p}(O_1)$ with
	\begin{align}
	\label{e:g} 
		\norm{\left[ \log g' \right]' }_{L^{q}(O_1)} 
		\lesssim 1 + \norm{O_1}_{\Bes_q} + \norm{O_2}_{\Bes_q}  \quad 1 < q \le p.
	\end{align}
\end{lemma}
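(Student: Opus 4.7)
The plan is to pass through the Riemann maps $\phi_i:\mathbb D\to O_i$, $i=1,2$, and reduce the estimate to the classical pre-Schwarzian bound for Riemann maps onto $\Bes_q$ domains. Writing $g=\phi_2\circ\phi_1^{-1}$ and differentiating the identity $\log g'=\log\phi_2'\circ\phi_1^{-1}-\log\phi_1'\circ\phi_1^{-1}$ via the chain rule yields
\[
 [\log g']'(z)\,\phi_1'(w)\;=\;P_{\phi_2}(w)-P_{\phi_1}(w), \qquad w=\phi_1^{-1}(z),\quad P_{\phi}:=\phi''/\phi'.
\]
Pulling back to the disk by $\phi_1$, whose real Jacobian is $|\phi_1'|^2$, gives
\[
 \int_{O_1}\bigl|[\log g']'\bigr|^q\,\d A\;=\;\int_{\mathbb D}|P_{\phi_2}-P_{\phi_1}|^q\,|\phi_1'|^{2-q}\,\d A.
\]

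Since $O_1$ is a $\Bes_p$ (hence Dini-smooth) domain, Kellogg's theorem provides a continuous, non-vanishing extension of $\phi_1'$ to $\overline{\mathbb D}$ with two-sided bounds depending only on $[O_1]_{\mathrm{Dini}}$. Therefore $|\phi_1'|^{2-q}\lesssim 1$ uniformly in $1<q\le p$, and the triangle inequality reduces matters to showing
\[
 \|P_{\phi_i}\|_{L^q(\mathbb D)}\;\lesssim\;1+\|O_i\|_{\Bes_q}, \qquad i=1,2.
\]

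For this last estimate I would exploit the simple connectivity of $\mathbb D$: $\log\phi_i'$ is a single-valued holomorphic function, so $v_i:=\arg\phi_i'$ is harmonic in $\mathbb D$. Parameterizing $\partial O_i$ by $\theta\mapsto\phi_i(e^{\i\theta})$ with tangent $\i e^{\i\theta}\phi_i'(e^{\i\theta})$ gives
\[
 v_i(e^{\i\theta})\;\equiv\;\arg T_{\partial O_i}\bigl(\phi_i(e^{\i\theta})\bigr)-\theta-\tfrac{\pi}{2} \pmod{2\pi},
\]
where $T_{\partial O_i}$ is the unit tangent field. Because $\phi_i$ is boundary bi-Lipschitz (with constants controlled by $[O_i]_{\mathrm{Dini}}$), the Besov regularity of $T_{\partial O_i}$ transfers to $v_i|_{\partial \mathbb D}$, yielding $\|v_i|_{\partial \mathbb D}\|_{\dot B^{1-1/q}_{q,q}(\partial \mathbb D)}\lesssim 1+\|O_i\|_{\Bes_q}$. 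The trace theorem for harmonic extensions on the disk then gives $\|\nabla v_i\|_{L^q(\mathbb D)}\lesssim \|v_i|_{\partial \mathbb D}\|_{\dot B^{1-1/q}_{q,q}}$, and the Cauchy-Riemann equations applied to the holomorphic $\log\phi_i'$ imply the pointwise identity $|\nabla v_i|=|P_{\phi_i}|$, completing the reduction.

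The principal obstacle is the quantitative bookkeeping: one must verify that (a) the bi-Lipschitz constant of $\phi_i$ on $\overline{\mathbb D}$ is controlled polynomially by $[O_i]_{\mathrm{Dini}}$, via the quantitative Kellogg-type estimates in Pommerenke's monograph; (b) the Besov norm transfer through the bi-Lipschitz boundary reparameterization is controlled by the bi-Lipschitz constants, which follows from the difference-based definition of the $\dot B^{1-1/q}_{q,q}$ norm; and (c) the trace-extension constant on $\mathbb D$ is an absolute constant depending at most on $q$. Assembling these three ingredients, the implicit constant in the final inequality depends only on $[O_1]_{\mathrm{Dini}}$, $[O_2]_{\mathrm{Dini}}$ and $p$, uniformly over $q\in(1,p]$, as stated.
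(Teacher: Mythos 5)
Your argument is essentially the paper's own proof, modulo cosmetics: you work with $\arg\phi_i'$ and the boundary tangent where the paper uses $\arg h_j'$ and the boundary normal (equivalent up to a rotation by $\pi/2$), and you invoke the harmonic-extension trace theorem plus Cauchy--Riemann ($|\nabla v_i|=|P_{\phi_i}|$) where the paper phrases the same step as the Herglotz extension mapping $B^{1-1/q}_{q,q}(\mathbb T)\to W^{1,q}(\mathbb D)$ applied to $\log h_j'$. The reduction $\int_{O_1}|[\log g']'|^q = \int_{\mathbb D}|P_{\phi_2}-P_{\phi_1}|^q|\phi_1'|^{2-q}$ and the two-sided Kellogg bound on $\phi_1'$ from Dini-smoothness match the paper exactly, so this is the same route.
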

\begin{proof}
It is shown in \cite{astala-prats-saksman}*{Theorem 1.2} that if $h_j$ is a conformal mapping from $\mathbb D$ onto $O_j$, then $h_j', \log h_j' \in W^{1,p}(\mathbb D)$ for a suitable branch of the logarithm. To track the dependence on $\norm{O_j}_{\Bes_p}$, let us outline their argument. For $\e^{\i t} \in \mathbb T$, it is not hard to compute that 
	\[ \arg N_{O_j}(h_j(\e^{\i t})) = -\arg h_j'(\e^{\i t}) + t \]
so that $1+\norm{O_j}_{\Bes_p} \sim \norm{\arg h_j'}_{B^{1-\frac{1}{q}}_{q,q}(\mathbb T)}$ for any $q>1$. Furthermore, the Herglotz extension maps $B^{1-\frac{1}{q}}_{q,q}(\mathbb T) \to W^{1,q}(\mathbb D)$ \cite{triebel83}*{Theorem 4.3.3}, which together with the well-known Herglotz representation \cite{pommerenke-book}*{Theorem 3.3.2}, yields
	\begin{equation}\label{e:logh} \log h_j' = \log \abs{h_j'(0)} + H_j, \quad \norm{H_j}_{W^{1,q}(\mathbb D)} \lesssim 1+\norm{O_j}_{\Bes_q}.\end{equation}
Now $g$ can be factored as $h_2 \circ h_1^{-1}$. By the chain rule and inverse function theorem, 
	\[ \left[ \log g' \right]' = \frac{\tilde H}{h_1'}  \circ h_1^{-1}, \quad \tilde H= \left[\log h_2' \right]' - \left[\log h_1' \right]'.\]
So, changing variables
	\[ \int_O \abs{ \left[ \log g' \right]' }^q = \int_{\mathbb D} \abs{ \tilde H }^q \abs{h_1'}^{2-q}.\]
Because $O_1$ is Dini-smooth, $h_1'$ is bounded above and below \cite{pommerenke-book}*{Theorem 3.3.5}, so the proof is concluded by appealing to \eqref{e:logh}.
\end{proof}

Immediately from Lemma \ref{lemma:conf} and Theorem A, we obtain a quantitative version of \cite{astala-prats-saksman}*{Theorem 1.1} in the Sobolev case. 
\begin{proposition}\label{prop:principal}
Let $F \in W^{1,2}_{\mathrm{loc}}(\Omega)$ be a homeomorphism satisfying
	\[ \overline{\partial} F(z) = \mu(z) \partial F(z), \quad z \in \Omega,\]
such that $F(\Omega)$ is also a bounded $\Bes_p$ domain. Then, 
	\begin{align}
	\label{e:Fp}&\norm{D \log \partial F}_{L^p(\Omega)} \le \left[\norm{F(\Omega)}_{\Bes_p} + \left(1+ \norm{\Omega}_{\Bes_p} \right)\left(1+\norm{\mu}_{W^{1,p}(\Omega)}^3 \right) \right] \mathcal E\left(\norm{\mu}_{W^{1,r}(\Omega)} \right); \\
	\label{e:F2}&\norm{ D \log \partial F}_{L^2(\Omega)} \lesssim 1 + \norm{F(\Omega)}_{\Bes_2} + \norm{\Omega}_{\Bes_2} \mathcal E\left( \norm{\mu}_{W^{1,r}(\Omega)} \right).
	\end{align}
\end{proposition}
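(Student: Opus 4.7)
The plan is to combine a Stoilow-type factorization of $F$ with Lemma \ref{lemma:conf} and Lemma \ref{lemma:invLp}, mediated by the weighted change of variables governed by Lemma \ref{lemma:moser2}. Extend $\mu$ by zero outside $\Omega$ and let $f$ be the principal solution of $\overline{\partial} f = \mu \partial f$ on all of $\mathbb{C}$; since $\supp \mu \subset \overline{\Omega}$, $f$ is conformal on $\mathbb{C} \setminus \overline{\Omega}$. Then $g \coloneqq F \circ f^{-1}: f(\Omega) \to F(\Omega)$ is a conformal bijection of simply connected Jordan domains, and from $\partial F = (g' \circ f) \partial f$ one reads
\[
\log \partial F = (\log g') \circ f + \sigma, \qquad \sigma \coloneqq \log \partial f,
\]
so that $D \log \partial F$ splits termwise.

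The $\sigma$-contribution is handled at once by Lemma \ref{lemma:invLp}: $\sigma$ solves \eqref{eq:sigma}, whence $\|D\sigma\|_{L^q(\Omega)} \lesssim \|\mu\|_{W^{1,q}(\Omega)}$ for $q \in \{2, p\}$, with the exponential factor in $L$ absorbed by the standing $\lesssim$ convention. For the conformal part $G \coloneqq (\log g') \circ f$, the chain rule combined with $|Df|^q \sim_K |Jf|^{q/2}$ from \eqref{e:Jf-quasi} gives, after changing variables $w = f(z)$,
\[
\|DG\|_{L^q(\Omega)}^q \sim_K \int_{f(\Omega)} \left|(\log g')'(w)\right|^q \left|Jf^{-1}(w)\right|^{1-q/2} dw,
\]
which pairs the $L^q$-bound for the conformal derivative from Lemma \ref{lemma:conf} against the Jacobian weight from Lemma \ref{lemma:moser2}.

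For $q = 2$ the weight disappears and Lemma \ref{lemma:conf} gives at once $\|DG\|_{L^2(\Omega)} \lesssim 1 + \|f(\Omega)\|_{\Bes_2} + \|F(\Omega)\|_{\Bes_2}$. The further ingredient needed for both \eqref{e:F2} and the $L^p$ estimate is the auxiliary bound
\[
\|f(\Omega)\|_{\Bes_q} \lesssim (1 + \|\Omega\|_{\Bes_q})(1 + \|\mu\|^3_{W^{1,q}(\Omega)}), \qquad q \in \{2, p\},
\]
which I would obtain by applying Lemma \ref{lemma:conf} to the exterior conformal map $f|_{\mathbb{C} \setminus \overline{\Omega}} : \mathbb{C} \setminus \overline{\Omega} \to \mathbb{C} \setminus \overline{f(\Omega)}$ (after a M\"obius inversion rendering both domains bounded) and controlling the boundary normal of $f(\Omega)$ by the trace of $\sigma$ on $\partial \Omega$, whose $B^{1-1/q}_{q,q}$-norm is in turn bounded via Theorem \ref{thm:local} applied to $\sigma$; the cubic factor in $\|\mu\|_{W^{1,q}}$ reflects the chaining of \eqref{e:local-sup} with the two additional uses of the resolvent needed in the trace step.

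For $q = p > 2$ the weight $|Jf^{-1}|^{1-p/2}$ lies in $A_p(\mathbb{C})$ with characteristic bounded by $\mathcal{E}(\|\mu\|_{W^{1,r}})$ via Lemma \ref{lemma:moser2}, and Lemma \ref{lemma:conf} controls $(\log g')'$ in $L^p(f(\Omega))$ by the source and target Besov norms. The main obstacle is to upgrade this exponential-type $A_p$ control into the precise polynomial interpolation factor $(1 + \|\mu\|_{W^{1,r}})^{1-2/p}$ of \eqref{e:Fp}. The plan is to interpolate the weighted integral between the unweighted $L^2$ estimate just obtained (which carries no factor in $\|\mu\|_{W^{1,r}}$) and an $L^\infty$-weight endpoint in which Lemma \ref{lemma:conf} controls $(\log g')'$ in $L^p(f(\Omega))$ while $\|Jf^{-1}\|_{L^\infty(f(\Omega))}$ is bounded through the logarithmic Sobolev control of $\|\sigma\|_\infty$ by $\|\sigma\|_{W^{1,2}}$ and $\log(e + \|\sigma\|_{W^{1,r}})$. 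The interpolation exponent $\theta = 1-2/p$ arising from $1/p = (1-\theta)/2 + \theta/\infty$ then yields the stated dependence, and combining with the bound for $D\sigma$ completes \eqref{e:Fp}.
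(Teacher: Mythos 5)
The central step — factor $F$ through a global quasiconformal homeomorphism of $\mathbb{C}$ and treat the residual map as conformal — is the same blueprint the paper uses, and your termwise split $\log\partial F=(\log g')\circ f+\sigma$ is the right structure. However, there is a genuine gap in your choice of the quasiconformal factor, and it propagates through the entire argument.

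You extend $\mu$ \emph{by zero} and take $f$ to be the corresponding principal solution. But $\mu\in W^{1,p}(\Omega)$ with $\supp\mu\subset\overline\Omega$ does \emph{not} imply that the zero extension belongs to $W^{1,2}(\mathbb{C})$ (or to $\mathrm{VMO}(\mathbb{C})$): unless $\mu$ has vanishing trace on $\partial\Omega$, the distributional gradient of the zero extension acquires a singular part along $\partial\Omega$ and the zero extension simply fails to be Sobolev across the boundary. Consequently the equation $\overline{\partial}\sigma=\mu\,\partial\sigma+\partial\mu$ no longer has $\partial\mu\in L^p(\mathbb{C})$, and Lemma \ref{lemma:invLp} (whose hypothesis is $\mu\in W^{1,2}(\mathbb{C})$) cannot be invoked to conclude $\|D\sigma\|_{L^q(\Omega)}\lesssim\|\mu\|_{W^{1,q}(\Omega)}$. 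The very first estimate in your proof therefore does not follow. The same obstruction reappears when you try to show $f(\Omega)\in\Bes_q$: the auxiliary exterior conformal map argument you sketch needs $\sigma$ to be trace-regular across $\partial\Omega$, which is again unavailable for the zero extension. The paper sidesteps all of this by replacing the zero extension with a genuine Sobolev extension $E\mu$ of $\mu$ from $\Omega$ to $\mathbb{C}$, built so that simultaneously $\|E\mu\|_\infty\le\varkappa(k)<1$ (a reflection-type extension with tuned coefficients) and $\|E\mu\|_{W^{1,q}(\mathbb{C})}\lesssim\|\mu\|_{W^{1,q}(\Omega)}$. It then defines $G$ as the principal solution for $E\mu$, so $\sigma=\log\partial G$ has the required global Sobolev regularity via Theorem \ref{thm:local}, and $G(\Omega)\in\Bes_q$ follows from standard trace theorems applied to $G\in W^{2,q}(\mathbb C)$ rather than from any conformal-exterior argument. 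Note that $G|_\Omega$ still solves the same Beltrami equation as $F$ on $\Omega$ because $E\mu=\mu$ there, so the factorization $F=g\circ G|_\Omega$ with $g$ conformal remains valid.

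Two smaller points. First, your interpolation scheme for producing the $(1+\|\mu\|_{W^{1,r}})^{1-2/p}$ factor is more elaborate than needed: the paper gets it in one line by writing $\int_\Omega|F_2|^p\lesssim\|[\log g']'\|_{L^p(G(\Omega))}^p\,\|JG\|_\infty^{p/2-1}$ and bounding $\|JG\|_\infty$ via the Sobolev embedding $W^{2,r}(\mathbb C)\hookrightarrow W^{1,\infty}$ together with the $W^{2,r}$ estimate $\|G\|_{W^{2,r}}\lesssim 1+\|\mu\|_{W^{1,r}}^3$ coming from Theorem \ref{thm:local}. Your proposed $L^\infty$-weight endpoint also relies on a logarithmic Sobolev bound for $\|\sigma\|_\infty$, which produces an extra logarithmic loss that the stated inequality does not have. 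Second, the $q=2$ case is indeed the clean one (the weight disappears, as you say), and your estimate there is fine once the extension is fixed; the cubic factor $\|\mu\|^3$ should be traced to the $W^{2,r}$ bound on $G$ fed into the trace theorem, not to two further applications of the resolvent.
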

\begin{proof}
Let $E\mu$ be an extension of $\mu$ from $\Omega$ to $\mathbb{C}$ satisfying
	\[ \norm{E\mu}_\infty \le \varkappa(k) < 1, \quad \norm{E\mu}_{W^{1,q}(\mathbb C)} \lesssim \norm{\mu}_{W^{1,q}(\Omega)}, \quad 2 \le q \le p.\]
The difficulty in constructing $E\mu$ is to guarantee the first property. To do so, one needs only to modify the parameters in the usual first order extension \cite{evans}*{Theorem 5.4.1}. To demonstrate, extending over the line $x_n=0$, one can use
	\[ Eu(x_1,\ldots,x_n) = -\ep u\left(x_1,\ldots,x_{n-1},-\tfrac{3}{2\ep} x_n\right) + (1+\ep) u\left(x_1,\ldots,x_{n-1},-\tfrac{1}{2(1+\ep)} x_n\right),\]
where $\ep>0$ is chosen small enough that $k(1+2\ep)=\varkappa(k)<1$. Composing with $\C^1$ boundary parameterizations does not affect the $L^\infty$ norm of the extension and will only add a constant to the relevant Sobolev norms.  {  Since $\mu$ has compact support, so does $E\mu$.}

Let $G$ be the principal solution to $\overline{\partial}G = (E\mu) \partial G$. $G$ has the well-known solution formula $G(z) = z + \mathcal K(\rho)$ where $(I-(E\mu) \B)\rho=E\mu$. By Theorem \ref{thm:local}, 
	\begin{equation}\label{eq:F} \|DG\|_{W^{1,s}(\mathbb C)} \lesssim 1+\|E\mu\|_{W^{1,s}(\mathbb C)}^3 \lesssim 1+ \|\mu\|_{W^{1,s}(\Omega)}^3, \quad s \in \{r,p\}.\end{equation}
{ Furthermore, $G$ is bilipschitz by the discussion in Section \ref{ss:bilip} with bounds
	\[ \mathcal E \left( - \norm{\mu}_{W^{1,r}(\Omega)} \right) \lesssim \abs{ DG(z)} \lesssim 1 + \|\mu\|_{W^{1,r}(\Omega)}^3, \quad \text{for all} \ z \in \mathbb C.\]} 
Since $\Omega$ is a $\Bes_p$ domain and $G \in W^{2,p}(\mathbb C)$, standard trace results \cites{gagliardo,triebel83} together with \eqref{eq:F} show that $G(\Omega)$ is also a $\Bes_{p}$ domain and moreover the estimates 
	\begin{align}
		\label{e:trace}
		\begin{aligned}
			&\norm{G(\Omega)}_{\Bes_2} \lesssim 
			\norm{\Omega}_{\Bes_2} \mathcal E\left( \norm{\mu}_{W^{1,r}(\Omega)} \right), \\
			&\norm{G(\Omega)}_{\Bes_p} \lesssim 
			\norm{\Omega}_{\Bes_p} \left( 1+\norm{\mu}_{W^{1,p}(\Omega)}^3 \right)\mathcal E\left( \norm{\mu}_{W^{1,r}(\Omega)} \right)
		\end{aligned}
	\end{align}
hold.
Introducing $g = F \circ G^{-1}$, $g$ is conformal on $G(\Omega)$ with $g(G(\Omega)) =F(\Omega)$. Furthermore, 
	\begin{equation}\label{eq:dlogdf} \partial \log \partial F= \frac{\partial^2 F}{\partial F} = \frac{(g' \circ G) \partial^2 G}{ (g' \circ G) \partial G} + \frac{(g'' \circ G) (\partial G)^2}{ (g' \circ G) \partial G} \eqqcolon F_1 + F_2. \end{equation}
Recall that $\partial G = \e^\sigma$, for $\sigma$ satisfying $\overline{\partial} \sigma = (E\mu) \partial \sigma + \partial(E\mu)$. Noticing that $F_1 = \partial \sigma$ and applying \eqref{eq:sigma-reg}, we have
	\begin{equation}\label{e:F1} \norm{F_1}_{L^q(\mathbb C)} \le \norm{\sigma}_{W^{1,q}(\mathbb C)} \lesssim \|E\mu\|_{W^{1,q}(\mathbb C)} \lesssim \|\mu\|_{W^{1,q}(\Omega)}, \quad 2 \le q \le p,\end{equation}
which obeys the estimates \eqref{e:Fp} and \eqref{e:F2}. To estimate $F_2$, change variables and use the quasiconformality of $G$ in \eqref{e:Jf-quasi} to obtain for any $2 \le q \le p$,
	\begin{equation}\label{e:loggG} \int_\Omega \left|F_2 \right|^q \lesssim \int_{G(\Omega)} \left|\frac{g''}{ g'} \right|^q \left( |JG| \circ G^{-1} \right)^{\frac{q}{2}-1} \le \norm{ \left[\log g'\right]' }_{L^{q}(G(\Omega))}^q \|JG\|_\infty^{\frac{q}{2}-1}. \end{equation}
For $q \in \{2,p\}$, estimate the first factor by \eqref{e:g} from Lemma \ref{lemma:conf} and \eqref{e:trace}. When $q=2$, the $JG$ term vanishes in \eqref{e:loggG}, so \eqref{e:F2} is established.
When $q=p$, estimate $JG$ by the Sobolev embedding and \eqref{eq:F} with $s=r$.
\end{proof}

\subsection{Proof of Lemma \ref{lemma:sharp}}\label{ss:sharp}
By \eqref{e:standard} and \eqref{e:NO}, the assumptions of this lemma imply $f \in W^{2,p}(\Omega)$. {  This implies first that $\abs{Df}$ belongs to $L^\infty(\Omega)$ by Sobolev embedding. If we can show $|Df|^{-1}$ also belongs to $L^\infty(\Omega)$ and that $O=f(\Omega)$ is a $\Bes_p$ domain, then the weighted estimate for $\B_{O}$ will follow from Lemma \ref{lemma:Bp}.i. To establish the fact concerning $\abs{Df}^{-1}$ we appeal to the main result of \cite{mateu-extra}. Since $\Omega$ is a $\Bes_p$ domain, embedding of Besov spaces shows that $\Omega$ in fact has $\C^{1+\epsilon}$ boundary for some small $\epsilon>0$. Therefore, by \cite[Theorem, p. 403]{mateu-extra}, $f$ is bilipschitz. We can now conclude by trace regularity that $O=f(\Omega)$ is also $\Bes_p$ since $f$ is bilipschitz and belongs to $W^{2,p}(\Omega)$.} 
\qed

\subsection{Proof of Theorem \ref{thm:global}}{ We follow the same path as the proof of Theorem \ref{thm:local}. In particular, we focus on proving only the lower bound \eqref{e:lower-bound} and the invertibility follows by the same argument. Also note that here we only consider the case $p>2$. Let $g \in \C_0^\infty(\mathbb C)$ and set $w = \mathcal K( \mathbf 1_{\overline{\Omega}} g)$ so that $w$ satisfies the inhomogeneous Beltrami equation $\overline{\partial} w = \mu \partial w + h$ on $\mathbb C$ with $h=(I-\mu \B_\Omega)(\mathbf 1_{\overline{\Omega}} g)$. Notice that $\overline{\partial}w=\mathbf 1_{\overline{\Omega}}g \in \C^\infty(\overline{\Omega})$ and since $\Omega$ is a $\Bes_p$ domain, $\partial w$ and $h$ both belong to $W^{1,p}(\Omega)$. Let us normalize so that $\norm{h}_{W^{1,p}(\Omega)}=1$. Then the precise estimate that remains to be proved is
	\begin{align}\label{e:goal} &\left\Vert \overline{\partial} w \right \Vert_{W^{1,p}(\Omega)} \lesssim \Cp \norm{\B_O}_{\mathcal L(W^{1,p}(O,\omega))} + \Cp^2\left( 1+ \norm{\Omega}_{\Bes_p} \right) \norm{O}_{\Bes_p} , \\
	\label{e:C}& 
	 \Cp = \norm{O}_{\Bes_p} + \left( 1 + \norm{\Omega}_{\Bes_p}\right) \left( 1 + \norm{\mu}_{W^{1,p}}^3 \right).  \end{align}
We remind the reader that we are suppressing the dependence on lower-order terms in accordance with the remarks in \S \ref{ss:assumptions}.} By Sobolev embedding and interpolation, since $p>2$,
	\begin{equation}\label{e:emb} \|h\|_{L^s(\Omega)} \lesssim_{p,s} 1, \quad p\le s\le \infty.\end{equation}
Defining $u=w \circ f^{-1}$, we again have the identities from \eqref{eq:h}, \eqref{eq:wu}, and \eqref{e:wV},
	\begin{equation}\label{e:chainO} 
		\begin{aligned}
		\overline{\partial} w &= (\partial u \circ f) \overline{\partial} f + H, & H &\coloneqq \frac{h}{1-|\mu|^2}= \left(\overline{\partial} u \circ f\right)\overline{ \partial f} \\
		\partial w &= V + \overline{\mu} H,& V &\coloneqq  (\partial u \circ f) \partial f.
		\end{aligned}
		\end{equation}
Let $q \in \left\{ \tfrac{pp'}{2},p\right\}$. As before, introduce $\sigma=\log \partial f$. One of the main differences between this proof and the proof of Theorem \ref{thm:local} is that our estimate for $D\sigma$ comes from the conformal estimate \eqref{e:Fp} in Proposition \ref{prop:principal}. Therefore, 
	\begin{align}
		\label{e:H-sigma-O} \left\Vert H \right\Vert_{W^{1,q}(\Omega)}  &\lesssim 1+ \|\mu\|_{W^{1,p}(\Omega)} , \quad \norm{D\sigma}_{L^q(\Omega)} \lesssim \Cp. 
	 \end{align}
The first key estimate concerning these quantities is the analogue of \eqref{e:V-H}.
\begin{equation}\label{e:V-H-O} \norm{V}_{L^s(\mathbb C)} \lesssim \mathcal E(s) \norm{H}_{L^s(\mathbb C)}, \quad 2 \le s < \infty.\end{equation}
The proof of \eqref{e:V-H-O} follows exactly the lines of Proposition \ref{prop:LpBO} and thus is omitted. In light of the relation $\overline{\partial} f = \mu \partial f$, \eqref{e:V-H-O} provides an estimate for $\norm{\overline{\partial}w}_{L^q(\Omega)}$. Indeed, from \eqref{e:chainO}, $\abs{\overline{\partial}w } \lesssim \abs{V} + \abs{H}$. Thus, it remains to estimate the homogeneous norm $\norm{D \overline{\partial} w}_{L^q(\Omega)}$ in \eqref{e:goal}. To this end, by the product rule and \eqref{e:chainO}, 
	\[ D \overline{\partial} w = U_1 + U_2 + DH, \quad U_1 = D\left( \partial u \circ f \right) \overline{\partial} f, \quad U_2 = \left( \partial u \circ f \right) D\overline{\partial} f.\]
Therefore, once we can prove
\begin{equation}
	\label{e:U1-G-O}\norm{U_1}_{L^q(\Omega)} \lesssim \Cp \norm{\B_O}_{\mathcal L(W^{1,q}(O,\omega))} , \quad \omega = \abs{Jf^{-1}}^{1-q},\end{equation}
we can follow the same bootstrapping algorithm (first for $q<p$ and then taking $q=p$) as in Theorem \ref{thm:local} to conclude the proof of \eqref{e:goal}. Let us first outline the bootstrapping argument and then focus on establishing \eqref{e:U1-G-O}. By virtue of \eqref{e:U1-G-O} we can summarize the proof thus far by
	\begin{equation} 
	\label{e:summ4} \|\overline{\partial} w\|_{W^{1,q}(\Omega)} \lesssim \Cp \left\Vert \B_O \right\Vert_{\mathcal L(W^{1,q}(O,\omega))} + \|U_2\|_{L^q(\Omega)}, \quad q \in \left\{ \tfrac{pp'}{2},p\right\} . 
	\end{equation} 
Let us first consider $q= \frac{pp'}{2}<p$. By Lemma \ref{lemma:Bp}.iii,
	\begin{equation}\label{e:iii} \left\Vert \B_O \right\Vert_{W^{1,q}(O,\omega)} \lesssim  1 + \norm{O}_{\Bes_p}, \quad q = \frac{pp'}{2}.\end{equation}
Furthermore, recall the identity $U_2=V\lambda$ from \eqref{e:U2Vlambda} with $\lambda=D \mu + \mu D \sigma$. By \eqref{e:H-sigma-O}, $\lambda$ satisfies $\norm{\lambda}_{L^p(\Omega)} \lesssim \Cp$. Using H\"older's inequality and applying \eqref{e:V-H-O} with $s = \frac{pq}{p-q}=\frac{p^2}{p-2}$, we obtain $\norm{U_2}_{L^q(\Omega)} \lesssim \Cp$. Inserting this estimate and \eqref{e:iii} into \eqref{e:summ4} and applying \eqref{e:NO}, we obtain
 \begin{equation} 
	\label{e:summ5} \|D w\|_{W^{1,q}(\Omega)} \lesssim \left(1+ \norm{\Omega}_{\Bes_q} \right) \|\overline{\partial} w\|_{W^{1,q}(\Omega)} \lesssim \Cp \left(1+ \norm{\Omega}_{\Bes_p} \right)\left( 1 + \norm{O}_{\Bes_p}\right), \quad q = \frac{pp'}{2}. 
	\end{equation} 
Now, recalling $V = \partial w - \overline{\mu}H$ from \eqref{e:chainO}, since $q=\frac{pp'}{2} > 2$, the $L^\infty(\Omega)$ norm of $V$ is controlled by \eqref{e:summ5} providing the alternative estimate for $U_2$ in \eqref{e:summ4} when $q=p$:
	\[ \norm{U_2}_{L^p(\Omega)} \lesssim \norm{V}_{L^\infty(\Omega)} \norm{\lambda}_{L^p(\Omega)} \lesssim \Cp^2 \left(1+ \norm{\Omega}_{\Bes_p} \right)\left( 1 + \norm{O}_{\Bes_p}\right).\]
Applying this to \eqref{e:summ4} with $q=p$ exactly proves \eqref{e:goal}.

\begin{proof}[Proof of \eqref{e:U1-G-O}]
Notice that since $h$ is supported in $\overline{\Omega}$, \eqref{e:chainO} implies that $\overline{\partial} u$ is supported in $\overline{O}$. Therefore, $\mathbf 1_{\overline{O}} \partial u = \B_{O} \overline{\partial} u$ whence, following the same steps as \eqref{e:SobBeurling}, changing variables and using the quasiconformality of $f$ in the form \eqref{e:Jf-quasi},
	\[ \int_\Omega \left \vert D(\partial u \circ f) \overline{\partial} f \right\vert^q \lesssim \int_{O} \left \vert D\B_O(\overline{\partial}u) \right \vert^q \left \vert Jf^{-1} \right \vert^{1-q} \lesssim \norm{\B_O(\overline{\partial}u)}_{W^{1,q}(O,\omega)}^q.\]
Thus, to establish \eqref{e:U1-G-O}, it remains to control $\norm{\overline{\partial}u}_{W^{1,q}(O,\omega)}$ by $\Cp$. The lower order term, $\norm{\overline{\partial}u}_{L^q(O,\omega)}$, by changing variables and referring to \eqref{e:chainO}, satisfies
	\[ \norm{\overline{\partial}u}_{L^q(O,\omega)}^q = \int_\Omega \abs{\overline{\partial}u \circ f}^q |Jf^{-1} \circ f|^{-q} \lesssim \int_O \abs{\overline{\partial}u \circ f}^q |\partial f|^{2q} \lesssim \int_\Omega |h \partial f|^q.\] By \eqref{e:emb}, $\norm{h}_{L^\infty(\Omega)} \lesssim 1$. On the other hand,  $\partial f = 1+\B(I - \mu \B_\Omega)^{-1} \mu$ so by Proposition \ref{prop:LpBO}, $\norm{\partial f}_{L^q(\Omega)} \lesssim \norm{\mu}_{L^q(\Omega)}$. Therefore, the lower order term $\norm{\overline{\partial} u}_{L^p(O,\omega)}$ is definitely controlled by $\Cp$, since $q \le p$. To show the same bound for $\norm{D \overline{\partial}u}_{L^p(O,\omega)}$ we will in fact establish
	\begin{equation}\label{e:UGO} \norm{D \overline{\partial}u}_{L^q(O,\omega)} \lesssim \norm{G}_{L^q(\Omega)}, \quad G = D(\partial u \circ d) \overline{\partial f}.\end{equation}
This will suffice to conclude the proof of \eqref{e:U1-G-O} since one can verify, using the same calculation as \eqref{e:HG}, that
	\[ \norm{G}_{L^q(\Omega)} \lesssim \norm{H}_{W^{1,q}(\Omega)} + \norm{D\sigma}_{L^q(\Omega)} \lesssim \Cp.\] 
By changing variables, \eqref{e:UGO} follows from the pointwise estimate \eqref{e:pointG}, which can be verified again in this setting, since it only relies on the quasiconformality of $f$.
\end{proof}

\section{Muckenhoupt weights and Jacobians of Quasiconformal maps}\label{s:Jf}
In this section we prove Lemmata \ref{lemma:moser} and \ref{lemma:moser2} concerning the Muckenhoupt weight properties of $\abs{Jf}$ and $\abs{Jf^{-1}}$. The main tool is the following critical Sobolev embedding Lemma of Moser-Trudinger type.
\begin{lemma}\label{lemma:exp}
There exists an absolute constant $C$ such that for every $\sigma: \mathbb C \to \mathbb C$ with $D \sigma \in L^2(\Omega)$, $a \in \mathbb R$, and $1<p<\infty$,
	\[ \sup_{Q \subset \mathbb C} \avg{ \abs{\e^{a\sigma}} }_Q \avg{ \abs{\e^{-a\sigma}} }_{\frac{1}{p-1},Q} \le C \exp \left(C \frac{|a|^{{  2}}p}{p-1} \norm{D\sigma}_{L^2({ \Omega})}^{{ 2}} \right).\]
\end{lemma}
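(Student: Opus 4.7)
Since $|\e^{a\sigma}| = \e^{a\Re\sigma}$ for real $a$ and $\|D\Re\sigma\|_{L^2} \le \|D\sigma\|_{L^2} =: L$, I reduce the proof to the case of a real-valued $u := \Re\sigma$. The left-hand side is then of Muckenhoupt $\A_p(\mathbb C)$-characteristic type for the weight $\e^{au}$; indeed, for $p = 2$ it coincides exactly with $[\e^{au}]_{\A_2(\mathbb C)}$.

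Fixing an arbitrary cube $Q \subset \mathbb C$, I split $u = (u - u_Q) + u_Q$ in both averages. The $\e^{\pm a u_Q}$ factors arising from the first average and from the outer $(p-1)$-th power of the $L^{1/(p-1)}$-average in the second average cancel across the product, reducing the problem to estimating the two centered quantities
\[ \avg{\e^{a(u - u_Q)}}_Q \quad \text{and} \quad \avg{\e^{-a(u-u_Q)/(p-1)}}_{1/(p-1), Q}. \]
The main analytic input is the critical Sobolev embedding in two dimensions: the Poincar\'e inequality $|Q|^{-1}\int_Q |u-u_Q| \lesssim \|Du\|_{L^2(Q)} \le L$, valid in $\mathbb R^2$ because the factor $\ell(Q)$ is absorbed by $|Q|^{-1/2}$, yields $\|u\|_{\BMO(\mathbb C)} \lesssim L$. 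John-Nirenberg then gives $|Q|^{-1}\int_Q \e^{\lambda(u-u_Q)}\,\d x \le C_0$ for all $\lambda$ with $|\lambda| L$ below an absolute threshold, which, applied with $\lambda = a$ and $\lambda = -a/(p-1)$, handles the regime $\tfrac{|a|p}{p-1}L \lesssim 1$.

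The chief obstacle is the complementary regime $\tfrac{|a|p}{p-1}L \gtrsim 1$, where John-Nirenberg alone fails. A direct application of the sharp two-dimensional Moser-Trudinger inequality $|Q|^{-1}\int_Q \exp(c_0(u-u_Q)^2/L^2) \le C_0$ combined with Young's inequality $\lambda t \le c_0 t^2/L^2 + \lambda^2 L^2/(4c_0)$ yields only the quadratic rate $\exp(Ca^2 p/(p-1)\, L^2)$, which is weaker than the linear rate claimed once $\tfrac{|a|p}{p-1}L$ is large. To recover the linear exponent I plan to decompose $u$ as a bounded function plus a controlled superposition of logarithms of $\A_1(\mathbb C)$-weights, via the Coifman-Rochberg description of $\BMO$, and invoke sharp Buckley/Hyt\"onen-P\'erez $\A_p$-constant estimates for powers of $\A_1$-weights. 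Exchanging the Gaussian tail of Moser-Trudinger for the exponential tail of $\A_1$-logarithms produces the linear dependence on $\tfrac{|a|p}{p-1}L$; this quadratic-to-linear upgrade is the technical heart of the argument.
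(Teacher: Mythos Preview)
Your plan is largely sound up to the Moser--Trudinger step, but you have been misled by what is evidently a typo in the stated exponent. The paper's own proof of this lemma does \emph{not} establish the linear rate $\exp\bigl(C\tfrac{|a|p}{p-1}L\bigr)$; it establishes the quadratic rate
\[
C\exp\!\left(\frac{C\,|a|^2 p}{p-1}\,L^2\right),
\]
and this quadratic bound is exactly what is used downstream (see the estimate $[\,|Jf|^a\,]_{\A_p(\mathbb C)} \le C\exp\bigl(\tfrac{Cp|a|^2L^2}{p-1}\bigr)$ in the proof of Lemma~\ref{lemma:moser}, and the $L^2$ dependence in \eqref{e:Jf-Ap-1}--\eqref{e:Jf-Ap-2}). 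So the ``chief obstacle'' you identify is an artifact: your direct Moser--Trudinger argument already yields the bound that is actually proved and actually needed.

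Concretely, the paper's argument parallels your first three paragraphs: reduce to the real part, subtract the cube average $\sigma_Q$, and exploit critical two-dimensional Sobolev scaling. Rather than invoking Moser--Trudinger as a black box, the paper writes the pointwise Riesz-potential bound $|\sigma(z)-\sigma_Q|\lesssim \int_Q |D\sigma(w)|\,|z-w|^{-1}\,\d w$, applies Young's inequality to get $\langle|\sigma-\sigma_Q|^k\rangle_Q \lesssim L^k\,k^{k/2}$, and sums the exponential Taylor series. This gives $\langle \e^{|a||\sigma-\sigma_Q|}\rangle_Q \lesssim \exp(C|a|^2L^2)$, and the product of the two averages yields the quadratic exponent above. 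Your route via John--Nirenberg plus the Moser--Trudinger inequality with Young's inequality $\lambda t \le c_0 t^2/L^2 + \lambda^2 L^2/(4c_0)$ arrives at the same place by essentially the same mechanism.

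The Coifman--Rochberg step you propose to upgrade quadratic to linear is both unnecessary and, as sketched, unworkable. The decomposition $u = \alpha\log Mf_1 - \beta\log Mf_2 + b$ has $\alpha,\beta$ proportional to $\|u\|_{\BMO}\sim L$, so $\e^{au}$ involves $(Mf_j)^{\pm a\alpha}$ with exponents of size $|a|L$; for a generic $\BMO$ function these powers leave $\A_p$ once $|a|L$ is large, and the sharp $\A_p$ estimates you cite give nothing in that regime. What rescues the situation here is precisely the stronger hypothesis $D\sigma\in L^2$, and Moser--Trudinger is the sharp tool for that hypothesis --- it yields Gaussian, not exponential, tails, whence the quadratic rate. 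Drop the final paragraph of your plan and you have a correct proof of the lemma as the paper actually proves and uses it.
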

\begin{proof}
Using the Taylor formula, we can write, for any cube $Q \subset \mathbb C$ and any $z \in Q$,
	\[ |\sigma(z) - \sigma_Q| \lesssim \int_{Q} \frac{|D\sigma(w)|}{|w-z|} \, \d w, \quad \sigma_Q \coloneqq \frac{1}{|Q|} \int_Q \sigma(w) \, \d w.\]
For any $p>2$, by Young's inequality
	\[ \int_Q \left|\int_{Q} \frac{|D\sigma(w)|}{|w-z|} \, \d w\right|^p \, dz \le \|D \sigma\|_{L^2(Q)}^p \left( \int_{[-\ell(Q),\ell(Q)]^2} |z|^{-r} \, \d z\right)^{p/r}, \quad \frac 12 + \frac 1r = \frac 1p +1.\]
Notice that by definition, $r < 2$ so
	\[ \int_{[-\ell(Q),\ell(Q)]^2} |z|^{-r} \, \d z \,{  \lesssim}\, \frac{\ell(Q)^{2-r}}{2-r}.\]
However, some calculations show that $2-r = \frac 4{p+2}$ and $p/r = p/2+1$ so that
	\[ \frac{1}{|Q|} \int_Q |\sigma(z)-\sigma_Q|^p \, \d z \lesssim \|D\sigma\|_{L^2(Q)}^p  \left(\frac{p+2}{4} \right)^{p/2+1}.\]
For each $a>0$, we can now compute
	\begin{equation}\label{eq:exp-a} \begin{aligned} \frac{1}{|Q|} \int_Q \e^{a|\sigma(z)-\sigma_Q|} \, \d z &= \sum_{k=0}^\infty \frac{a^k}{k!} \frac{1}{|Q|} \int_Q |\sigma(z)-\sigma_Q|^k \, \d z  
	\lesssim \sum_{k=0}^\infty (a \|D\sigma\|_{L^2})^{k} \frac{k^{k/2}}{k!} .
	\end{aligned}\end{equation}
A crude estimate for the power series is given by
	\[ \sum_{k=0}^\infty A^k \frac{k^{k/2}}{k!} \lesssim \exp(CA^2),\]
for some absolute constant $C$, by splitting into even and odd integers, using Stirling's approximation, and the crude estimate $(2k)! \ge (k!)^2$.
So, for any $a \in \mathbb R$ and $p>1$, by \eqref{eq:exp-a},
	\[ \begin{aligned} \frac{1}{|Q|} \int_Q \abs{ \e^{a\sigma} } &\left( \frac{1}{|Q|} \int_Q \abs{ \e^{-\frac{a}{p-1}\sigma} } \right)^{p-1} \\
	&=\frac{1}{|Q|} \int_Q \abs{\e^{a(\sigma-\sigma_Q)} } \left( \frac{1}{|Q|} \int_Q \abs{ \e^{-\frac{a}{p-1}(\sigma-\sigma_Q) } } \right)^{p-1} \\
	&\le \frac{1}{|Q|} \int_Q \e^{|a| \cdot |\sigma-\sigma_Q|} \left( \frac{1}{|Q|} \int_Q \e^{\frac{|a| }{p-1}|\sigma-\sigma_Q|} \right)^{p-1} \\
	&\le C\exp\left(C|a|^2 {  \norm{D\sigma}_{L^2(\Omega)}^{2}}\right) \exp\left(\frac{C|a|^2 {  \norm{D\sigma}_{L^2(\Omega)}^{2}}}{p-1} \right). \end{aligned} \]
\end{proof}
\subsection{Proof of Lemma \ref{lemma:moser}}\label{ss:moser}

As in the proof of Theorem \ref{thm:local}, $\partial f=\e^\sigma$ where $\sigma$ satisfies 
	\[ \overline{\partial} \sigma = \mu \partial { \sigma} + \partial \mu, \quad \mbox{on } \mathbb C.\]
Therefore,
	\[ (I-\mu \B_\Omega) \overline{\partial} \sigma = \partial \mu, \quad \mbox{on } \Omega.\]
However, $\norm{\B}_{\mathcal L(L^2(\mathbb C))}=1$ hence
	\[ \norm{\partial \mu}_{L^2(\mathbb C)} = \norm{ (I-\mu \B) \overline{\partial} \sigma }_{L^2(\mathbb C)} \ge (1-k) \norm{ \overline{\partial} \sigma }_{L^2(\mathbb C)}.\]
Therefore, $\norm{D\sigma}_{L^2(\mathbb C)} \lesssim L$. Now, the Jacobian takes the form
	\[ |J{f}|=(1-|\mu|^2)|\e^{2\sigma}| \sim \left\vert \e^{2\sigma} \right\vert,\] 
so Lemma \ref{lemma:exp} establishes that $\abs{Jf}^a$ belongs to $\A_p(\mathbb C)$ with the estimate
\begin{equation}\label{e:JfA} \left[ \abs{Jf}^a \right]_{\A_p(\mathbb C)} \le C \exp\left(\frac{Cp|a|^2L^2}{p-1} \right). \end{equation}
We next demonstrate that \eqref{e:JfA} implies $\abs{Jf}^a$ belongs to every $\RH_s(\mathbb C)$. Indeed, for each $1<s<\infty$, by H\"older's inequality, for each cube $Q \subset \mathbb C$, setting $p=\frac{s+1}{s}$,
	\[ |Q|^p= \left( \int_Q \abs{Jf}^{\frac{a}{p}}\abs{Jf}^{-\frac{a}{p}}\right)^p  \le \left( \int_Q \abs{Jf}^a \right)\left( \int_Q \abs{Jf}^{-as}\right)^{p-1} \]
	\[ \le \left( \int_Q \abs{Jf}^a \right) \left( \left[ \abs{Jf}^{as} \right]_{\A_2(\mathbb C)} |Q|^2 \left( \int_Q \abs{Jf}^{as}\right)^{{  -1}} \right) ^{p-1}  .\]
Therefore, applying \eqref{e:JfA} and rearranging the above display,
	\begin{equation}\label{e:JfRH} \left[ \abs{Jf}^a \right]_{\RH_s(\mathbb C)} \coloneqq \sup_{Q \ \mathrm{cube} } \avg{ \abs{Jf}^a }_{s,Q}\avg{ \abs{Jf}^a }_{Q}^{-1} \le C \exp \left( C |a|^2 s L^2 \right).\end{equation}
Now, to handle $\abs{J{f^{-1}}}^a$ we use the identity, for any $t \in \mathbb R$,
	\begin{equation}\label{eq:change} \int_Q \abs{J{f^{-1}}}^{t} = \int_{f^{-1}(Q)} |J{f^{-1}} \circ f|^{t} |Jf| = \int_{f^{-1}(Q)} |Jf|^{1-t}.\end{equation} 
Since $f$ is quasiconformal, $f$ and $f^{-1}$ are quasisymmetric (see e.g. \cite{astala-book}*{Corollary 3.10.4}). Therefore, there exist cubes $R,P$ such that $R \subset f^{-1}(Q) \subset P$ with $|R| \sim |P|$. So, we claim that for every $t \in \mathbb R$,
	\begin{equation}\label{eq:area-dist2} \frac{1}{|P|^t} \left( \int_{P} |Jf| \right)^{t-1} \int_{P} |Jf|^{1-t} \lesssim 
\left\{\begin{array}{cc} \exp \left( C (1-t)^2 L^2 \right), & 1-t > 1; \\
		1, & 0 \le 1-t \le 1; \\
		\exp \left( C t(t-1) L^2 \right), & 1-t < 0. \end{array} \right. 
\end{equation}
When $1-t \ge 1$, \eqref{eq:area-dist2} is simply the $\RH_{1-t}$ property of $\abs{Jf}$ from \eqref{e:JfRH}. If $0 \le 1-t \le 1$, then \eqref{eq:area-dist2} follows by H\"older's inequality. Finally, if $t > 1$, apply the $\A_p(\Omega)$ condition for $|Jf|$ with $1-t=\frac{-1}{p-1}$ from \eqref{e:JfA}. Thus \eqref{eq:area-dist2} is established. Finally, for any $a \in \mathbb R$ and $1<p<\infty$, applying \eqref{eq:change} followed by \eqref{eq:area-dist2} with $t=a$ and $t=-\frac{a}{p-1}$,
	\[  \left( \frac{1}{|Q|} \int_Q \abs{J{f^{-1}}}^{a} \right) \left( \frac{1}{|Q|} \int_Q \abs{J{f^{-1}}}^{-\frac{a}{p-1}} \right)^{p-1} \lesssim \frac{|P|^a}{|Q|^a} \left( \frac{|P|^{-\frac{a}{p-1}}}{|Q|^{-\frac{a}{p-1}}} \right)^{p-1} = 1,\]
with the appropriate implicit constant according to \eqref{eq:area-dist2}.

\subsection{Proof of Lemma \ref{lemma:moser2}}\label{ss:moser2}
By Proposition \ref{prop:principal} $\log \partial F \in {  \dot W^{1,2}(\Omega)}$. Since $\Omega$ is $\Bes_p$ it is also $\C^1$ so there exists an extension $\rho \in W^{1,2}(\mathbb C)$. By Lemma \ref{lemma:exp} $\abs{\e^{a\rho}} \in \A_q(\mathbb C)$. The same argument used to extend to $JF^{-1}$ in the proof of Lemma \ref{lemma:moser} in \S \ref{ss:moser} shows that $\abs{JF^{-1}}^a$ belongs to the following $\A_p(O)$ class for $O = F(\Omega)$, defined by finiteness of the characteristic
	\begin{equation}\label{e:JFinvO} \left[ \abs{JF^{-1}}^a \right]_{\A_p(O)} = \sup_{Q \subset \mathbb C} \avg{ \mathbf 1_{O} \abs{JF^{-1}}^a }_{Q} \avg{ \mathbf 1_O \abs{JF^{-1}}^{-a} }_{\frac{1}{p-1},Q}.\end{equation}
An unpublished result of Wolff \cite{wolff-Ap} states that for any measurable set $O$, if $\omega^{1+\ep} \in \A_p(O)$ then there exists $\upsilon \in \A_p(\mathbb C)$ such that $\upsilon=\omega$ on $O$. See \cite{gc-book}*{Theorem IV.5.5} for a proof, \cite{diplinio23domains}*{Lemma 3.8} for a quantitative version, and \cites{holden,kurki22} for two related results. Because \eqref{e:JFinvO} holds for arbitrary $a$, we can apply Wolff's result to obtain the promised $\upsilon$.

\subsection{Explicit dependence in special cases}\label{ss:particular}
Let us compute this precise dependence in the relevant special cases to establish \eqref{e:Jf-Ap-1} and \eqref{e:Jf-Ap-2}.

Let $1<p<\infty$ and denote by $p'=\frac{p}{p-1}$ the H\"older conjugate. The computation $(1-\frac p2)\frac{-1}{p-1}= 1 -\frac{p'}{2}$ reveals the symmetry
	\begin{equation}\label{e:symm-p} \left[ \abs{Jf^{-1}}^{1-\frac{p}{2}} \right]_{\A_p(\mathbb C)}^{\max\{1,\frac{1}{p-1}\}} = \left[ \abs{Jf^{-1}}^{1-\frac{p'}{2}} \right]_{\A_{p'}(\mathbb C)}^{\max\{1,\frac{1}{p'-1}\}}.\end{equation}

Therefore, we can assume $p>2$ so that $\max\{1,\frac{1}{p-1} \}=1$, $1-(1- \frac{p}{2}) >1$, and $\frac 12 \le 1-(1- \frac{p'}{2}) \le 1$ in order to compute
	\[ \left[ \abs{Jf^{-1}}^{1-\frac{p}{2}} \right]_{\A_p(\mathbb C)}^{\max\{1,\frac{1}{p-1} \}} \le C \exp\left(C p^2 L^2\right). \]
Therefore, \eqref{e:Jf-Ap-1} follows by the symmetry \eqref{e:symm-p}.
To prove \eqref{e:Jf-Ap-2}, let $1<p<\infty$, so that $1-(1-r) >1$, and hence
	\[ \left[ \abs{Jf^{-1}}^{1-r} \right]_{\A_r(\mathbb C)}^{\max\left\{1,\frac{1}{r-1}\right\}} \le C \exp\left(C r^2\max\left\{1,\tfrac{1}{r-1}\right\} L^2 \right) \le C \exp\left(C_1 \max\left\{r^2,\tfrac{1}{r-1}\right\} L^2 \right).\]

{ 
\section*{Thanks}
The authors thank the referee for an extremely thorough report as well as very helpful comments and suggestions.}
\bibliography{refs-beltrami}
\bibliographystyle{amsplain}

\end{document}